\newtheorem{theorem}{Theorem}[section]
\newtheorem{remark}[theorem]{Remark}
\title{Non-negative mixed finite element formulations for a tensorial diffusion equation}
\author{K.~B.~Nakshatrala} \author{A.~J.~Valocchi}
\address{Correspondence to: Professor Kalyana Babu Naskshatrala, Department of Mechanical Engineering, 
  216 Engineering/Physics Building, Texas A\&M University, College Station, Texas - 77843. Phone: (+1)-979-845-1292.}
\email{knakshatrala@tamu.edu}
\address{Professor Albert Valocchi, Department of Civil and Environmental Engineering, 
  1110 Newmark Laboratory, University of Illinois at Urbana-Champaign, Illinois - 61801.}
\email{valocchi@uiuc.edu}
\begin{document}

\begin{abstract}
  We consider the tensorial diffusion equation, and address the discrete maximum-minimum principle 
  of mixed finite element formulations. In particular, we address non-negative solutions (which 
  is a special case of the maximum-minimum principle) of mixed finite element formulations. It 
  is well-known that the classical finite element formulations (like the single-field Galerkin 
  formulation, and Raviart-Thomas, variational multiscale, and Galerkin/least-squares mixed 
  formulations) do not produce non-negative solutions (that is, they do not satisfy the discrete 
  maximum-minimum principle) on arbitrary meshes and for strongly anisotropic diffusivity 
  coefficients. 

  In this paper we present two non-negative mixed finite element formulations for tensorial 
  diffusion equations based on constrained optimization techniques. These proposed mixed 
  formulations produce non-negative numerical solutions on arbitrary meshes for low-order 
  (i.e., linear, bilinear and trilinear) finite elements. The first formulation is based 
  on the Raviart-Thomas spaces, and the second non-negative formulation is based on the 
  variational multiscale formulation. For the former formulation we comment on the effect 
  of adding the non-negative constraint on the local mass balance property of the 
  Raviart-Thomas formulation. 

  We perform numerical convergence analysis of the proposed optimization-based non-negative 
  mixed formulations. We also study the performance of the active set strategy for solving 
  the resulting constrained optimization problems. The overall performance of the proposed 
  formulation is illustrated on three canonical test problems. 
\end{abstract}

\keywords{maximum-minimum principles for elliptic PDEs; discrete maximum-minimum 
  principle; non-negative solutions; active set strategy; convex quadratic programming; 
  tensorial diffusion equation; monotone methods}

  \maketitle


\section{INTRODUCTION}
\label{Sec:Optim_Intro}
Robustness of numerical methods for flow and transport problems in porous media is 
important for development of simulators to be used in a wide range of applications 
in subsurface hydrology and contaminant transport. In order to obtain robust and 
reliable numerical results it is imperative to preserve basic properties of solutions 
of mathematical models by computed approximations. In the simulation of reactive transport 
of contaminants one such basic properties is non-negative solutions as concentration 
of a chemical or biological species physically can never be negative. Since the domain 
of interest in subsurface flows is highly complex, one needs to employ unstructured 
computational grids. Therefore, obtaining non-negative solutions on unstructured meshes 
is an essential feature in the simulation of reactive transport of contaminants, as well 
as many other physical processes. 

However, obtaining non-negative solutions on unstructured grids using a numerical method 
(finite element, finite volume or finite difference) is not an easy task. In addition, 
there is another complexity arising from an anisotropic diffusion tensor. In subsurface 
flows, the heterogeneity in the velocity field will give rise to a non-homogeneous 
anisotropic diffusion tensor with non-negligible cross terms \cite{Pinder_Celia}. 
Several studies have shown that standard treatment of the cross-diffusion term 
will result in negative solutions on general computational grids, see 
\cite{Herrera_Valocchi_GW_2006_v44_p803} and references therein. Several ad-hoc 
procedures have been proposed in the literature. For example, a post processing 
step is typically employed in which one performs some sort of ``smoothing.'' But 
this procedure, in many cases, is not variationally consistent. Some other methods 
are limited in their range of applicability (e.g., the method proposed in Reference 
\cite{Herrera_Valocchi_GW_2006_v44_p803} can handle only structured grids). 

Herein we consider the dispersion/diffusion process for steady single-phase flow in 
heterogeneous anisotropic porous media. Such a flow can be described by the Poisson's 
equation with a tensorial diffusion coefficient, which when written in the mixed form 
is similar to the governing equations of Darcy flow. In this paper we propose two 
optimization-based mixed methods for solving tensorial diffusion equation that gives 
non-negative solutions on general grids for linear finite elements. The two methods 
are developed by rewriting the Raviart-Thomas and variational multiscale formulations 
as constrained minimization problems subject to a constraint on the primary variable 
to be non-negative. A similar approach based on optimization techniques has been 
used by Liska and Shashkov \cite{Liska_Shashkov_CiCP_2008_v3_p852} for a single 
field formulation, but herein we consider mixed finite element formulations. 

\emph{
The main idea behind the proposed methods is to augment a constraint on nodal values to 
be non-negative to the discrete (that is, after spatial finite element discretization) 
variational statement of the underlying formulation. Since we consider only low-order 
finite elements (and since the shape functions for these elements do not change their 
sign within an element), non-negative nodal values ensure non-negative solution every 
where in the element, and hence non-negative solution on the whole domain. This argument 
will not hold for high-order finite elements as shape functions for these finite elements 
(in general) change sign within an element.} 

Throughout this paper continuum vectors are denoted with lower case boldface normal letters, 
and (continuum) second-order tensors will be denoted using (\LaTeX) blackboard font (for 
example, $\mathbf{v}$ and $\mathbb{D}$, respectively).  We denote finite element vectors 
and matrices with lower and upper case boldface italic letters, respectively. For example, 
vector $\boldsymbol{v}$ and matrix $\boldsymbol{K}$. 
The curled inequality symbols $\succeq$ and $\preceq$ are used to denote generalized 
inequalities between vectors, which represent component-wise inequalities. That is, 
given two vectors $\boldsymbol{a}$ and $\boldsymbol{b}$, $\boldsymbol{a} \succeq 
\boldsymbol{b}$ means $a_i \geq b_i \; \forall i$. A similar definition holds for 
the symbol $\preceq$. Other notational conventions adopted in this paper are introduced 
as needed.

\subsection{Governing equations}
\label{Subsec:Optim_Governing_equations}
Let $\Omega \subseteq \mathbb{R}^{nd}$ (where ``$nd$" is the number of 
spatial dimensions) be a bounded domain with boundary $\partial \Omega 
= \bar{\Omega} \setminus \Omega$, where $\bar{\Omega}$ denotes the
closure of $\Omega$. Consider the diffusion of a chemical species 
in anisotropic heterogeneous medium, which is governed by the 
second-order elliptic tensorial diffusion partial differential 
equation. The governing equations are 
\begin{align}
  \label{Eqn:Optim_Poisson}
  - &\nabla \cdot \left(\mathbb{D}(\mathbf{x}) \nabla c(\mathbf{x}) \right) = 
  f(\mathbf{x}) \quad \mathrm{in} \; \Omega \\
  \label{Eqn:Optim_Neumann_BC}
  -&\mathbf{n}(\mathbf{x}) \cdot \mathbb{D} (\mathbf{x}) \nabla c = t^{\mathrm{p}}(\mathbf{x}) \quad \mathrm{on} 
  \; \Gamma^{\mathrm{N}} \\
  \label{Eqn:Optim_Dirichlet_BC}
  &c(\mathbf{x}) = c^{\mathrm{p}}(\mathbf{x}) \quad \mathrm{on} \; \Gamma^{\mathrm{D}} 
\end{align}
where $c(\mathbf{x})$ denotes the concentration field, $f(\mathbf{x})$ is the volumetric 
source, $t^{\mathrm{p}}(\mathbf{x})$ is the prescribed flux (i.e., Neumann boundary condition), 
$c^{\mathrm{p}}(\mathbf{x})$ is the prescribed concentration (i.e., Dirichlet boundary condition), 
$\Gamma^{\mathrm{D}}$ is that part of the boundary on which Dirichlet boundary condition is applied, 
$\Gamma^{\mathrm{N}}$ is the part of the boundary on which Neumann boundary condition is applied, 
$\mathbf{n}(\mathbf{x})$ is unit outward normal to the boundary, and $\nabla$ denotes gradient 
operator. For well-posedness one requires $\Gamma^{\mathrm{D}} \cup \Gamma^{\mathrm{N}} = \partial 
\Omega$ and $\Gamma^{\mathrm{D}} \cap \Gamma^{\mathrm{N}} = \emptyset$, and for uniqueness $\Gamma^{\mathrm{D}} 
\neq \emptyset$. We assume that the coefficient of diffusivity $\mathbb{D}(\mathbf{x})$ is a symmetric positive 
definite tensor such that, for some $0 < \alpha_1 \leq \alpha_2 < +\infty$, we have 
\begin{align}
  \alpha_1 \mathbf{y}^{T} \mathbf{y} \leq 
  \mathbf{y}^{T} \mathbb{D}(\mathbf{x}) \mathbf{y} \leq 
  \alpha_2 \mathbf{y}^{T} \mathbf{y} \quad \forall \mathbf{x} \in \Omega, 
  \; \forall \mathbf{y} \neq \mathbf{0} \in \mathbb{R}^{nd} 
\end{align}
In addition, we assume that $\mathbb{D}(\mathbf{x})$ is continuously differentiable.

\subsection{First-order (or mixed) form}
In many situations, the primary quantity of interest is the flux. But a single field (or primal) 
formulation does not produce accurate solutions for the flux. One can calculate the flux by 
differentiating the obtained $c(\mathbf{x})$, but there will be a loss of accuracy 
during this process. For example, under a single field formulation, linear finite elements 
produce fluxes that are constant and discontinuous across elements. This means that there 
is no flux balance across element edges. Balance of flux along element edges is a highly 
desirable feature and is of physical importance in many practical engineering problems. 
In order to alleviate aforementioned drawbacks of single formulations, mixed formulations 
are often employed. Equations \eqref{Eqn:Optim_Poisson}-\eqref{Eqn:Optim_Dirichlet_BC} in 
mixed (or first-order) form can be written as 
\begin{align}
  \label{Eqn:Optim_mixed_form_Darcy}
  & \mathbb{D}^{-1}(\mathbf{x}) \mathbf{v}(\mathbf{x}) = - \nabla c \quad \mathrm{in} \; \Omega \\
  & \nabla \cdot \mathbf{v} = f(\mathbf{x}) \quad \mathrm{in} \; \Omega \\
  & \mathbf{v}(\mathbf{x}) \cdot \mathbf{n}(\mathbf{x}) = t^{\mathrm{p}}(\mathbf{x}) \quad \mathrm{on} \; \Gamma^{\mathrm{N}} \\
  \label{Eqn:Optim_mixed_form_Dirichlet}
  & c(\mathbf{x}) = c^{\mathrm{p}}(\mathbf{x}) \quad \mathrm{on} \; \Gamma^{\mathrm{D}} 
\end{align}
where $\mathbf{v}(\mathbf{x})$ is an auxiliary variable, which can be interpreted 
as follows: given a plane defined by a normal $\mathbf{n}$, the quantity $\mathbf{v} 
\cdot \mathbf{n}$ will be the flux through the plane. 

\subsection{Maximum-minimum principle}
It is well-known that some elliptic partial differential equations (under appropriate regularity 
assumptions) satisfy the so-called maximum-minimum principle, and the Poisson's equation is one of 
them \cite{McOwen}. We now state the classical maximum-minimum principle for second-order elliptic 
partial differential equations. (In Section \ref{Sec:Optim_VMS} we state and prove a maximum-minimum 
principle under milder regularity assumptions. Note that weak solutions \emph{may} also possess a 
maximum-minimum principle. For example, see Reference \cite{Gilbarg_Trudinger}.)
Consider the boundary value problem given by equations \eqref{Eqn:Optim_Poisson}-\eqref{Eqn:Optim_Dirichlet_BC}. 
Let $c(\mathbf{x}) \in C^{2}(\Omega) \cap C^{0}(\bar\Omega)$, where $C^{2}(\Omega)$ denotes the 
set of twice continuously differentiable functions defined on $\Omega$, and $C^{0}(\bar \Omega)$ 
the set of uniformly continuous functions defined on $\Omega$. If $f(\mathbf{x}) \geq 0$ (or 
if $f(\mathbf{x}) \leq 0$) in $\Omega$ then $c(\mathbf{x})$ attains its minimum (or its 
maximum) on the boundary of $\Omega$. For a detailed discussion on
maximum-minimum principles see References \cite{McOwen,Evans_PDE,Han_Lin,Gilbarg_Trudinger}. 
 
One of the important consequences of maximum-minimum principles is the non-negative 
solution of a (tensorial) diffusion equation under non-negative forcing function with 
non-negative prescribed Dirichlet boundary condition. Obtaining non-negative solutions 
is of paramount importance in studying transport of chemical and biological species as 
negative concentration of a species is unphysical. 

\subsection{Discrete maximum-minimum principle}
The discrete analogy of the maximum-minimum principle is commonly referred to as the \emph{discrete 
maximum-minimum principle} (DMP). However, many numerical formulations do not \emph{unconditionally} 
satisfy the discrete maximum-minimum principle. Typically, there will be restrictions on the mesh or 
on the magnitude of coefficients of the diffusivity tensor. For example, the single field Galerkin 
formulation in the case of scalar diffusion satisfies the discrete maximum-minimum principle if the 
mesh satisfies weak acute condition \cite{Ciarlet_Raviart_CMAME_1973_v2_p17} (and also see Appendix). 
The question whether we get non-negative \emph{numerical} solutions leads us to the discrete maximum-minimum 
principle.

For recent works on DMP see References \cite{Burman_Ern_CMAME_2002_v191_p3833,
Burman_Ern_MathComput_2005_v74_p1637,Hoteit_Mose_Philippe_Ackerer_Erhel_IJNME_2002_v55_p1373,
Karatson_Korotov_NumerMath_2005_v99_p669,Karatson_Korotov_JCAM_2006_v192_p75,Krizek_Liu_CMAME_1998_v157_p387,
Krizek_Liu_ZAMM_2003_v83_p559} and also see the discussion in Reference \cite[Introduction]{Liska_Shashkov_CiCP_2008_v3_p852}. 
Considerable attention to DMP has also been given in the finite volume literature \cite{Herrera_Valocchi_GW_2006_v44_p803,
LePotier_CRM_2005_v341_p787,Lipnikov_Shashkov_Svyatskiy_JCP_2006_v211_p473,Nordbotten_Aavatsmark_Eigestad_NumerMath_2007_v106_p255}.
Optimization-based techniques have been employed in References \cite{Liska_Shashkov_CiCP_2008_v3_p852} and 
\cite{Mlacnik_Durlofsky_JCP_2006_v216_p337} to address DMP. For completeness, some of the classical results 
on discrete maximum-minimum principle are outlined in Appendix.

In this paper we concentrate on obtaining non-negative numerical solutions using mixed formulations 
under non-negative forcing function with non-negative Dirichlet boundary condition where ever it is 
prescribed. (Note that we do not assume that the Dirichlet boundary condition has to be prescribed 
on the whole boundary.) In all our test problems (see Section \ref{Sec:Optim_numerical}) we have 
$f(\mathbf{x}) \geq 0$ in $\Omega$ and $c^{\mathrm{p}} (\mathbf{x}) \geq 0$ on $\partial \Omega$. 
By using the maximum-minimum principle one can conclude that $c(\mathbf{x}) \geq 0$ in whole of 
$\bar \Omega$ (the closure of $\Omega$). That is, for the chosen test problems in Section 
\ref{Sec:Optim_numerical}, we must have non-negative solutions in the whole domain. 

\subsection{Main contributions of this paper} 
Some of the main contributions of this paper are as follows:
\begin{itemize}
\item We numerically demonstrate that various conditions outlined in Appendix (which are sufficient 
  for isotropic diffusion) are not sufficient for tensorial diffusion equation under the Raviart-Thomas 
  and variational multiscale formulations to produce non-negative solutions under non-negative 
  forcing functions with non-negative prescribed Dirichlet boundary conditions.   
\item We develop a non-negative formulation based on the lowest-order Raviart-Thomas 
  spaces, and discuss the consequences of obtaining non-negative solutions on the 
  local mass balance property. 
\item We extend the variational multiscale formulation to produce non-negative solutions 
  on general grids for low-order finite elements under non-negative forcing function and 
  prescribed non-negative Dirichlet boundary condition. We also show that the (continuous) 
  variational multiscale formulation satisfies a continuous maximum-minimum principle (under 
  appropriate regularity assumptions).
\end{itemize}

\subsection{Organization of the paper}
The remainder of this paper is organized as follows. In Section \ref{Sec:Optim_RT} 
we present a non-negative formulation based on the lowest-order Raviart-Thomas (RT0) 
finite element spaces, which is achieved by adding a non-negative constraint to the 
discrete variational setting of the Raviart-Thomas formulation. For this non-negative 
formulation we present both primal and dual constrained optimization problems, and 
comment on the ease of solving these problems and also the consequences of imposing 
the non-negative constraint on the local mass balance. In Section \ref{Sec:Optim_VMS}, 
a non-negative formulation based on the variational multiscale formulation will be 
presented. We also show that the (continuous) variational multiscale formulation 
satisfies a continuous maximum-minimum principle. Numerical results along with a 
discussion on the numerical performance of both the proposed non-negative formulations 
will be presented in Section \ref{Sec:Optim_numerical}. Finally, conclusions are drawn 
in Section \ref{Sec:Optim_conclusions}.

\section{A NON-NEGATIVE MIXED FORMULATION BASED ON  RAVIART-THOMAS SPACES}
\label{Sec:Optim_RT}
The Raviart-Thomas finite element formulation is widely used (for an example in subsurface 
modeling see \cite{Chen_Huan_Ma}) to solve diffusion equations in mixed form, and is based 
on the classical mixed formulation \cite{Raviart_Thomas_MAFEM_1977_p292}. The simplest and 
lowest order Raviart-Thomas space (commonly denoted as RT0) consists of fluxes evaluated 
on the midpoints of edges and constant pressure over elements. We first present the weak 
form and variational structure behind the Raviart-Thomas formulation. We then modify the 
variational structure by adding non-negative constraint on the concentration to build a 
non-negative low-order finite element formulation based on the RT0 spaces.  

To this end, define function spaces as
\begin{align}
  \label{Eqn:Optim_V_space}
  \mathcal{V} &:= \left\{\mathbf{v}(\mathbf{x}) \; | \; \mathbf{v}(\mathbf{x}) 
    \in \left(L^{2} (\Omega)\right)^{nd}, \; \nabla \cdot \mathbf{v} \in L^{2}(\Omega), \; 
    \mathbf{v}(\mathbf{x}) \cdot \mathbf{n}(\mathbf{x}) = t^{\mathrm{p}}(\mathbf{x}) 
    \; \mathrm{on} \; \Gamma^{\mathrm{N}} \right\}\\
  \label{Eqn:Optim_W_space}
  \mathcal{W} &:= \left\{\mathbf{v}(\mathbf{x}) \; | \; \mathbf{v}(\mathbf{x}) 
    \in \left(L^{2} (\Omega)\right)^{nd}, \; \nabla \cdot \mathbf{v} \in L^{2}(\Omega), \; 
    \mathbf{v}(\mathbf{x}) \cdot \mathbf{n}(\mathbf{x}) = 0 \; \mathrm{on} \; \Gamma^{\mathrm{N}} \right\}\\
  \label{Eqn:Optim_P_space}
  \mathcal{P} &:= L^{2} (\Omega)
\end{align}
Recall that ``$nd$'' denotes the number of spatial dimensions. For further details 
on function spaces see the monograph by Brezzi and Fortin \cite{Brezzi_Fortin}. Let 
$\mathbf{w}(\mathbf{x})$ and $q(\mathbf{x})$ denote the weighting functions 
corresponding to $\mathbf{v}(\mathbf{x})$ and $c(\mathbf{x})$, respectively. 
The classical mixed formulation for equations 
\eqref{Eqn:Optim_mixed_form_Darcy}-\eqref{Eqn:Optim_mixed_form_Dirichlet} can be written 
as: Find $\mathbf{v}(\mathbf{x}) \in \mathcal{V}$ and $c(\mathbf{x}) \in 
\mathcal{P}$ such that 
\begin{align}
  \label{Eqn:Optim_classical_mixed_formulation}
\left(\mathbf{w}; \mathbb{D}^{-1}\mathbf{v}\right) - \left(\nabla \cdot \mathbf{w}; c\right) 
+ \left(\mathbf{w} \cdot \mathbf{n}; c^{\mathrm{p}}\right)_{\Gamma^{\mathrm{D}}} - 
\left(q; \nabla \cdot \mathbf{v} - f\right) = 0 \quad 
\forall \mathbf{w}(\mathbf{x}) \in \mathcal{W}, \; q(\mathbf{x}) \in \mathcal{P}
\end{align}
It is well-known that, under appropriate smoothness conditions on the domain and its 
boundary, the above saddle-point formulation is well-posed \cite{Brezzi_Fortin}. That 
is, a unique (weak) solution exists for this problem that depends continuously on the 
input data. However, to obtain stable results using a finite element approximation, the 
finite dimensional spaces $\mathcal{V}^{h} \subset \mathcal{V}$ and $\mathcal{P}^{h} 
\subset \mathcal{P}$ in which a numerical solution is sought have to satisfy the 
Ladyzhenskaya-Babu\v ska-Brezzi (LBB) stability condition \cite{Brezzi_Fortin}. 
One such space that satisfies the LBB condition is the popular Raviart-Thomas (RT) finite 
element space. In this paper we consider only the lowest order Raviart-Thomas triangular 
finite element space (RT0). Let $\mathcal{T}_{h}$ be a triangulation on $\Omega$. The 
lowest order Raviart-Thomas finite dimensional subspaces on triangles are defined as 
\begin{align}
& \mathcal{P}_h := \left\{p \; | \; p = \mbox{a constant on each triangle} \; K \in \mathcal{T}_h \right\}\\
& \mathcal{V}_h := \left\{\mathbf{v} = (v^{(1)},v^{(2)}) \; | \; v^{(1)}_K = a_K + b_K x, \; 
  v^{(2)}_K = c_K + b_K y; a_K, b_K, c_K \in \mathbb{R}; K \in \mathcal{T}_h \right\}
\end{align}

\subsection{Discrete equations}
The discretized finite element equations of the Raviart-Thomas formulation for the 
mixed form of tensorial diffusion equation can be written as \cite{Chen_Huan_Ma}
\begin{align}
  \label{Eqn:Optim_RT0_discretized_FEM}
  \left[\begin{array}{cc}
      \boldsymbol{K}_{vv} & \boldsymbol{K}_{pv}^{T} \\
      \boldsymbol{K}_{pv} & \boldsymbol{O} 
    \end{array}\right] 
  \left\{\begin{array}{c} 
      \boldsymbol{v} \\
      \boldsymbol{p} 
    \end{array}\right\}
  = \left\{\begin{array}{c}
      \boldsymbol{f}_{v} \\ 
      \boldsymbol{f}_{p}
    \end{array}\right\}
\end{align}
where $\boldsymbol{O}$ is a zero matrix of appropriate size, the matrix $\boldsymbol{K}_{vv}$ 
is symmetric and positive definite, $\boldsymbol{v}$ denotes the (finite element) vector 
of flux degrees-of-freedom, and $\boldsymbol{p}$ denotes the vector of concentration 
degrees-of-freedom. 
Comparing the weak form \eqref{Eqn:Optim_classical_mixed_formulation} and discrete equation 
\eqref{Eqn:Optim_RT0_discretized_FEM}, the matrices $\boldsymbol{K}_{vv}$ and $\boldsymbol{K}_{pv}$ are 
obtained after the finite element discretization of the terms $(\mathbf{w};\mathbb{D}^{-1} \mathbf{v})$ 
and $-(q;\nabla \cdot \mathbf{v})$, respectively. Since equation \eqref{Eqn:Optim_classical_mixed_formulation} 
is written in symmetric form, $\boldsymbol{K}_{vp}$ (which comes from the term $-(\nabla \cdot \mathbf{w};c)$) 
will be equal to $\boldsymbol{K}_{pv}^{T}$. The vectors $\boldsymbol{f}_{v}$ and $\boldsymbol{f}_p$ are, 
respectively, obtained from $-(\mathbf{w} \cdot \mathbf{n};c^{\mathrm{p}})_{\Gamma^{\mathrm{D}}}$ and $-(q;f)$ 
after the finite element discretization. Since there is no term in equation \eqref{Eqn:Optim_classical_mixed_formulation} 
that contains both $q$ in the weighting (i.e., first) slot and $c$ in the second slot in a bilinear form $(\cdot;\cdot)$, 
we have the matrix $\boldsymbol{K}_{pp} = \boldsymbol{O}$ (a zero matrix).

The above system of equations \eqref{Eqn:Optim_RT0_discretized_FEM} is equivalent to the 
following constrained minimization problem
\begin{align}
  \label{Eqn:Optim_RT0_primal_problem}
  \mbox{(P1-RT0)} \quad
  \left\{
  \begin{array}{cc} 
    \mathop{\mathrm{minimize}}_{\boldsymbol{v}} &
    \frac{1}{2} \boldsymbol{v}^{T} \boldsymbol{K}_{vv} \boldsymbol{v} - 
    \boldsymbol{v}^{T} \boldsymbol{f}_{v} \\
    \mbox{subject to} & \boldsymbol{K}_{pv} \boldsymbol{v} - \boldsymbol{f}_{p} = \boldsymbol{0} \\
  \end{array} \right.
\end{align}
where $\boldsymbol{0}$ is a zero vector of appropriate size. Note that the constraint in equation 
\eqref{Eqn:Optim_RT0_primal_problem} is the local mass balance condition for each element. We refer 
the above equation as the primal problem for the Raviart-Thomas formulation, and denote it as (P1-RT0). 
This primal problem belongs to the class of \emph{convex quadratic programming} problems, and from 
optimization theory (for example, see Reference \cite{Boyd_convex_optimization}) it can be shown that 
the problem has a unique global minimizer. 

\begin{remark}
  A quadratic program is an optimization problem in which the objective function is a 
  quadratic function and the (equality and inequality) constraints are all linear. In a 
  convex quadratic program the Hessian of the objective function is positive semidefinite. 
\end{remark}

\begin{remark} 
It is interesting to note that, from the complexity theory, problem \eqref{Eqn:Optim_RT0_primal_problem} 
can be solved in polynomial time (for example, using the ellipsoid and interior point methods) 
\cite{Boyd_convex_optimization,Nocedal_Wright}. Note that the term ``polynomial time'' in the context 
of complexity theory should not be confused with the term ``polynomial convergence,'' which is commonly 
used in the convergence studies using the finite element method.
\end{remark}

Define the Lagrangian as 
\begin{align}
  \label{Eqn:Optim_Lagrangian}
  \mathcal{L}\left(\boldsymbol{v},\boldsymbol{p}\right) := 
  \frac{1}{2} \boldsymbol{v}^{T} \boldsymbol{K}_{vv} \boldsymbol{v} - 
  \boldsymbol{v}^{T} \boldsymbol{f}_{v} + 
  \boldsymbol{p}^{T} \left(\boldsymbol{K}_{pv} \boldsymbol{v} - \boldsymbol{f}_{p}\right)
\end{align}
where $\boldsymbol{p}$ is the vector of Lagrange multipliers. Using the Lagrange multiplier 
method \cite{Boyd_convex_optimization} the primal problem \eqref{Eqn:Optim_RT0_primal_problem} 
is equivalent to 
\begin{align}
  \label{Eqn:Optim_Lagrange_multiplier_method}
  \mathop{\mbox{extremize}}_{\boldsymbol{v}, \; \boldsymbol{p}} \; \mathcal{L}(\boldsymbol{v},\boldsymbol{p})
\end{align}
and the first-order optimality conditions for this problem gives rise to the discretized finite 
element equations \eqref{Eqn:Optim_RT0_discretized_FEM}. 
We now write the dual problem corresponding to the primal problem \eqref{Eqn:Optim_RT0_primal_problem}. 
To this end, define the Lagrange dual function as 
\begin{align}
  \label{Eqn:Optim_Lagrange_dual_function}
  g(\boldsymbol{p}) := \inf_{\boldsymbol{v}} \; \mathcal{L} (\boldsymbol{v},\boldsymbol{p}) = 
  -\frac{1}{2} \boldsymbol{p}^{T} \boldsymbol{K}_{pv} \boldsymbol{K}_{vv}^{-1} \boldsymbol{K}_{pv}^{T} \; \boldsymbol{p} 
  +\boldsymbol{p}^{T} \left(\boldsymbol{K}_{pv} \boldsymbol{K}_{vv}^{-1} \boldsymbol{f}_v - \boldsymbol{f}_p\right)
  -\frac{1}{2} \boldsymbol{f}_v^{T} \boldsymbol{K}_{vv}^{-1} \boldsymbol{f}_v
\end{align}
The above expression on the right-hand side is obtained as follows. Let $\boldsymbol{v}^{*}$ be 
the minimizer that gives the infimum of $\mathcal{L}(\boldsymbol{v},\boldsymbol{p})$ with respect 
to $\boldsymbol{v}$. Then $\boldsymbol{v}^{*}$ has to satisfy 
\begin{align}
\boldsymbol{K}_{vv} \boldsymbol{v}^{*}- \boldsymbol{f}_{v} + \boldsymbol{K}^{T}_{pv} \boldsymbol{p} = \boldsymbol{0}
\end{align}
which is a necessary condition, and is obtained by equating the derivative of $\mathcal{L}(\boldsymbol{v},
\boldsymbol{p})$ (which is defined in equation \eqref{Eqn:Optim_Lagrangian}) with respect to $\boldsymbol{v}$ 
to zero. Since the matrix $\boldsymbol{K}_{vv}$ is positive definite (and hence invertible) we have 
\begin{align}
\boldsymbol{v}^{*} = \boldsymbol{K}_{vv}^{-1} \left(\boldsymbol{f}_v - \boldsymbol{K}_{pv}^T \boldsymbol{p}\right)
\end{align}
By substituting the above expression for the minimizer $\boldsymbol{v}^{*}$ into the definition of 
$\mathcal{L}(\boldsymbol{v},\boldsymbol{p})$ \eqref{Eqn:Optim_Lagrangian} we obtain the expression 
on the right-hand side of equation \eqref{Eqn:Optim_Lagrange_dual_function}. 

The dual problem corresponding to the primal problem \eqref{Eqn:Optim_RT0_primal_problem} 
can then be written as 
\begin{align}
  \mathop{\mathrm{maximize}}_{\boldsymbol{p}} & \quad g(\boldsymbol{p}) 
\end{align}
which is \emph{equivalent} to 
\begin{align}
  \label{Eqn:Optim_dual_problem_simplified}
  \mbox{(D1-RT0)} \quad 
  \mathop{\mathrm{minimize}}_{\boldsymbol{p}} \quad 
  \frac{1}{2} \boldsymbol{p}^{T} \boldsymbol{K}_{pv} \boldsymbol{K}_{vv}^{-1} \boldsymbol{K}_{pv}^{T} \; \boldsymbol{p} 
  - \boldsymbol{p}^{T} \left(\boldsymbol{K}_{pv} \boldsymbol{K}_{vv}^{-1} \boldsymbol{f}_v - \boldsymbol{f}_{p} \right)
\end{align}
The stationarity of the above problem implies 
\begin{align}
  \boldsymbol{K}_{pv} \boldsymbol{K}_{vv}^{-1} \boldsymbol{K}_{pv}^{T} \; \boldsymbol{p} 
  = \boldsymbol{K}_{pv} \boldsymbol{K}_{vv}^{-1} \boldsymbol{f}_v - \boldsymbol{f}_{p}
\end{align}
which is the Schur complement form of equation \eqref{Eqn:Optim_RT0_discretized_FEM} expressed in terms 
of Lagrange multipliers by analytically eliminating the variable $\boldsymbol{v}$. Note that the Schur 
complement operator $\boldsymbol{K}_{pv} \boldsymbol{K}_{vv}^{-1} \boldsymbol{K}_{pv}^{T}$ is symmetric
 and positive definite. 

A simple numerical example to be presented later (e.g., see Figure \ref{Fig:Optim_RT0_Problem_2}) 
shows that RT0 triangular element does not satisfy the discrete maximum-minimum principle, and in 
particular, does not produce non-negative solutions for non-negative forcing functions with non-negative 
prescribed Dirichlet boundary conditions. 

\subsection{A non-negative mixed formulation}
\label{Subsec:Optim_RT0_non_negative}
In order to get non-negative solutions under the RT0 spaces, we pose the dual problem as 
\begin{align}
  \label{Eqn:Optim_dual_problem_DMP}
  \mbox{(D2-RT0)} \quad 
  \left\{ \begin{array}{ll}
      \mathop{\mathrm{minimize}}_{\boldsymbol{p}} & \quad 
      \frac{1}{2} \boldsymbol{p}^{T} \boldsymbol{K}_{pv} \boldsymbol{K}_{vv}^{-1} \boldsymbol{K}_{pv}^{T} \; \boldsymbol{p} 
      - \boldsymbol{p}^{T} \left(\boldsymbol{K}_{pv} \boldsymbol{K}_{vv}^{-1} \boldsymbol{f}_v - \boldsymbol{f}_{p}\right) \\
      \mbox{subject to} & \quad \boldsymbol{p} \succeq \boldsymbol{0}
    \end{array} \right.
\end{align}
Recall that the symbol $\succeq$ denotes the generalized inequality between vectors, which represents 
component-wise inequality (see Introduction, just above subsection \ref{Subsec:Optim_Governing_equations}, 
for a discussion on this notation). The primal problem corresponding to this new dual problem will then be 
\begin{align}
  \label{Eqn:Optim_new_primal}
  \mbox{(P2-RT0)} \quad 
  \left\{ \begin{array}{ll}
      \mathop{\mbox{minimize}}_{\boldsymbol{v}} & \quad \frac{1}{2} \boldsymbol{v}^{T} 
      \boldsymbol{K}_{vv} \boldsymbol{v} - \boldsymbol{v}^{T} \boldsymbol{f}_{v} \\
      %
      \mathop{\mbox{subject to}} & \quad \boldsymbol{K}_{pv} \boldsymbol{v} - \boldsymbol{f}_{p} \preceq \boldsymbol{0}
    \end{array} \right.
\end{align}

\begin{remark}
The primal problem given in equation \eqref{Eqn:Optim_new_primal} is obtained by inspection. That 
is, one can easily check (using a direct calculation) that the dual problem of this new primal problem 
\eqref{Eqn:Optim_new_primal} will be the same as equation \eqref{Eqn:Optim_dual_problem_DMP}. Also, it 
should be noted that one can write the dual problem corresponding to a given dual problem (that is, the 
dual of a dual). For the problem at hand, the dual of the dual problem will be the same as the primal 
problem, which is not the case in general \cite{Boyd_convex_optimization}. Hence, one will obtain the 
primal problem \eqref{Eqn:Optim_new_primal}  by writing the dual of the dual problem 
\eqref{Eqn:Optim_dual_problem_DMP}. 
\end{remark}

By comparing the constraints in equations \eqref{Eqn:Optim_RT0_primal_problem} and 
\eqref{Eqn:Optim_new_primal} one can conclude that under the proposed non-negative 
method based on the Raviart-Thomas formulation one may violate local mass balance by 
creating \emph{artificial sinks}. We can infer more on local mass balance by looking 
at the Karush-Kuhn-Tucker (KKT) conditions (which in this case are necessary and 
sufficient for the optimality) for the new primal problem given by equations 
\eqref{Eqn:Optim_new_primal}. The KKT optimality conditions for the (P2-RT0) 
problem are 
\begin{align}
  & \boldsymbol{K}_{vv} \boldsymbol{v} + \boldsymbol{K}_{pv}^{T} \; \boldsymbol{p} = \boldsymbol{f}_{v} \\
  & \boldsymbol{K}_{pv} \boldsymbol{v} - \boldsymbol{f}_{p} \preceq \boldsymbol{0} \\
  & \boldsymbol{p} \succeq \boldsymbol{0} \\
  & p_{i} (\boldsymbol{K}_{pv} \boldsymbol{v} - \boldsymbol{f}_{p})_{i} = 0 \quad \forall i
\end{align}
The last condition (which is basically the complementary slackness condition in the KKT system of equations) 
implies that one \emph{may} not have local mass balance in those elements for which the Lagrange multiplier 
vanishes (i.e., $p_i = 0$, where $i$ denotes the element number). Note that in the RT0 formulation, the 
Lagrange multiplier $p_i$ denotes the concentration in the $i^{\mathrm{th}}$ element. 

\begin{remark}
From optimization theory \cite{Boyd_convex_optimization} one can show that the primal 
(P2-RT0) and dual (D2-RT0) problems are equivalent. That is, there is no duality gap 
for the optimization-based RT0 formulation. The difference between primal and dual 
solutions is commonly referred to as the duality gap. In general, the solution of a 
dual problem gives an upper bound to its corresponding primal problem. 

However, from a computational point of view the primal and dual problems can have different 
numerical performance (especially with respect to computational cost, and selection of 
numerical solvers). The primal problem (P2-RT0) has more complicated constraints than 
the dual problem (D2-RT0) for which the constraints are (lower) bounds on the design 
variable $\boldsymbol{p}$. Compared to the primal problem (P2-RT0), the dual problem 
(D2-RT0) has a more complicated objective function, which is defined in terms of Schur 
complement operator. 
For all the numerical results presented in this paper, we have used the dual problem 
(D2-RT0). However, we have compared the numerical solutions obtained using the dual 
problem with the primal problem (P2-RT0), and the solutions are identical as predicted 
by the theory.

Special solvers are available in the literature (for example, especially designed interior 
point methods \cite{Fletcher_Optimization,Nocedal_Wright}) that are effective for solving 
problems that belong to the class of quadratic programming with constraints being just 
bounds on design variables. Similarly, special solvers are available for problems involving 
Schur complement operators. For example, the preconditioned conjugate gradient (PCG) solver 
is quite effective for solving large-scale problems involving Schur complement operator. A 
detailed analysis comparing computational costs of the primal (P2-RT0) and dual (Q2-RT0) 
problems is beyond the scope of this paper.
\end{remark}

\section{A NON-NEGATIVE VARIATIONAL MULTISCALE MIXED FORMULATION}
\label{Sec:Optim_VMS}

Masud and Hughes \cite{Masud_Hughes_CMAME_2002_v191_p4341} have proposed a stabilized mixed 
formulation for the first-order form of the Poisson equation that satisfies the LBB condition. 
In this paper we refer  to this formulation as the variational multiscale (VMS) formulation. 
Nakshatrala \textit{et al.} \cite{Nakshatrala_Turner_Hjelmstad_Masud_CMAME_2006_v195_p4036} 
have shown that the variational multiscale formulation can be derived based on the multiscale 
framework proposed by Hughes \cite{Hughes_CMAME_1995_v127_p387}. The variational multiscale 
formulation possesses many favorable numerical properties and performs very well in practice. 
For example, the formulation passes three dimensional patch tests even for distorted elements 
\cite{Nakshatrala_Turner_Hjelmstad_Masud_CMAME_2006_v195_p4036}. 
Another feature of this formulation worth mentioning is that the equal-order interpolation 
for $c$ and $\mathbf{v}$ is stable 
\cite{Hughes_CMAME_1995_v127_p387,Nakshatrala_Turner_Hjelmstad_Masud_CMAME_2006_v195_p4036}. 
However, the variational multiscale formulation in general does not satisfy the discrete 
maximum-minimum principle, which will be illustrated below in Figure \ref{Fig:Optim_HVM_WCTmesh_Problem_2} 
using a simple numerical example. In this section, we present a non-negative method based on the 
variational multiscale mixed formulation. To this end, we first present the weak form and variational 
structure behind the variational multiscale formulation. 

Let the function spaces for the concentration and its corresponding weighting 
function under the VMS formulation be 
\begin{align}
  &\tilde{\mathcal{P}} \equiv H^{1}(\Omega) \\
  &\tilde{\mathcal{Q}} \equiv H^{1}(\Omega) 
\end{align}
where $H^{1}(\Omega)$ is a standard Sobolev space defined on domain $\Omega$ \cite{Brezzi_Fortin}. The 
variational multiscale formulation reads \cite{Nakshatrala_Turner_Hjelmstad_Masud_CMAME_2006_v195_p4036,
  Masud_Hughes_CMAME_2002_v191_p4341}: Find $c(\mathbf{x}) \in \tilde{\mathcal{P}}$ and $\mathbf{v}
(\mathbf{x}) \in \mathcal{V}$ such that 
\begin{align}
\label{Eqn:Optim_VMS_weak_form}
  \left(\mathbf{w};\mathbb{D}^{-1}\mathbf{v}\right) &- \left(\nabla \cdot \mathbf{w}; c\right) 
  + \left(\mathbf{w}\cdot \mathbf{n};c^{\mathrm{p}}\right)_{\Gamma^{\mathrm{D}}} - \left(q;\nabla \cdot 
    \mathbf{v} - f \right) \notag \\
  &- \frac{1}{2}\left(\mathbb{D}^{-1} \mathbf{w} + \nabla q; 
    \mathbb{D}\left(\mathbb{D}^{-1}\mathbf{v} + \nabla c\right)\right) = 0 \quad 
  \forall q(\mathbf{x}) \in \tilde{\mathcal{Q}}, \; \mathbf{w}(\mathbf{x}) \in \mathcal{W}
\end{align}
where $\mathbf{w}(\mathbf{x})$ and $q(\mathbf{x})$ are weighting functions corresponding 
to $\mathbf{v}(\mathbf{x})$ and $c(\mathbf{x})$, and $\mathcal{V}$ and $\mathcal{W}$ are 
defined in equations \eqref{Eqn:Optim_V_space} and \eqref{Eqn:Optim_W_space}, respectively. The 
stationarity (minimizing with respect to $\mathbf{v}$ and maximizing with respect to $c$) of 
the following (continuous) optimization problem 
\begin{align}
  \label{Eqn:Optim_continuous_extremization}
  \mathop{\mbox{extremize}}_{\mathbf{v} \in \mathcal{V}, \; c \in \tilde{\mathcal{P}}} \; 
  \frac{1}{2} \left(\mathbf{v};\mathbb{D}^{-1}\mathbf{v}\right) 
  - (c;\nabla \cdot \mathbf{v} - f) + (\mathbf{v} \cdot \mathbf{n}; 
  c^{\mathrm{p}})_{\Gamma^{\mathrm{D}}} - \frac{1}{4} \left(\mathbb{D}^{-1} \mathbf{v} 
    + \nabla c ; \mathbb{D} (\mathbb{D}^{-1}\mathbf{v} + \nabla c)\right)
\end{align}
is equivalent to the weak form given by equation \eqref{Eqn:Optim_VMS_weak_form}. 

Many practically important problems do not have solutions in $C^2(\Omega) \cap C^{0}(\bar{\Omega})$, and 
hence for these problem one cannot employ the classical maximum-minimum principle, which we have outlined 
in Section \ref{Sec:Optim_Intro}. For example, there exist no (classical) solutions to test problems \#1 and 
\#2 (which are defined in Section \ref{Sec:Optim_numerical}) that belong to $C^{2}(\Omega)$ as the forcing functions 
in both these cases are \emph{not} continuous on $\Omega$. (On the other hand, the solution to test problem \#3 
does belong to $C^{2}(\Omega) \cap C^{0}({\bar{\Omega}})$.) However, weak solutions do exist for test problems \#1 
and \#2. Using $L^{p}$ regularity theory (for example, see Reference \cite{Borsuk_Kondratiev}), one can show that 
these solutions, in fact, belong to $H^{2}(\Omega) \cap C^{1}(\Omega) \cap C^{0}(\bar{\Omega})$. 

\begin{remark}
  Note that $H^{2}(\Omega)$ is not a subset of $C^{1}(\Omega)$ 
  or vice-versa. On the other hand, $H^{2}(\Omega) \subset C^{0}
  (\Omega)$. 
\end{remark}

\subsection{Continuous maximum-minimum principle}
In this subsection we demonstrate one of the main contributions of this paper, namely that 
the weak solution under the variational multiscale formulation (under appropriate regularity 
assumptions) satisfies a continuous maximum-minimum principle. To the authors' knowledge, 
this property of the VMS formulation has \emph{not} been discussed/proved in the literature. 
We employ the standard notation used in mathematical analysis, for example see Reference 
\cite{Evans_PDE}. The standard abbreviation `a.e.' for \emph{almost everywhere} is often 
used in this subsection. We now state and prove a continuous maximum-minimum principle for 
the VMS formulation. 

\begin{theorem}
  Assume that the Dirichlet boundary condition is prescribed on the whole of the 
  boundary (that is, $\Gamma^{\mathrm{D}} = \partial \Omega$), and the diffusivity 
  tensor is assumed to be continuously differentiable. 
  Let $f(\mathbf{x}) \in L^{2}(\Omega)$, and $f(\mathbf{x}) \geq 0$ almost everywhere. 
  Let the weak solution $c(\mathbf{x})$ of the variational multiscale formulation 
  \eqref{Eqn:Optim_VMS_weak_form} belong to $H^{2}(\Omega)\cap C^{1}(\Omega) \cap C^{0}
  (\bar{\Omega})$. Then 
  \begin{align*}
    \min_{\bar{\Omega}} c(\mathbf{x}) = \min_{\partial \Omega} c(\mathbf{x}) 
  \end{align*}
\end{theorem}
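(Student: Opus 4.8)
The plan is to show that the weak solution of the VMS formulation in fact satisfies the original strong-form problem almost everywhere, and then to establish the minimum principle directly from this elliptic identity by a truncation (test-function) argument that requires only the reduced regularity $H^{2}(\Omega)\cap C^{1}(\Omega)\cap C^{0}(\bar{\Omega})$ rather than the classical $C^{2}$.

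First I would decouple the weak form \eqref{Eqn:Optim_VMS_weak_form}: taking $\mathbf{w}=\mathbf{0}$ and letting $q$ vary gives one identity, while taking $q=0$ and letting $\mathbf{w}$ vary gives another. The key observation is that the VMS stabilization is \emph{consistent}. In the $\mathbf{w}$-equation, the symmetry of $\mathbb{D}$ simplifies $\left(\mathbb{D}^{-1}\mathbf{w};\mathbb{D}(\mathbb{D}^{-1}\mathbf{v}+\nabla c)\right)=\left(\mathbf{w};\mathbb{D}^{-1}\mathbf{v}+\nabla c\right)$, and after integrating the term $-(\nabla\cdot\mathbf{w};c)$ by parts (recalling $\Gamma^{\mathrm{D}}=\partial\Omega$, so $\Gamma^{\mathrm{N}}=\emptyset$) I expect to obtain
\[
\tfrac{1}{2}\left(\mathbf{w};\mathbb{D}^{-1}\mathbf{v}+\nabla c\right)+\left(\mathbf{w}\cdot\mathbf{n};c^{\mathrm{p}}-c\right)_{\partial\Omega}=0\quad\forall\,\mathbf{w}\in\mathcal{W}.
\]
Testing with $\mathbf{w}$ of compact support forces $\mathbb{D}^{-1}\mathbf{v}+\nabla c=\mathbf{0}$ almost everywhere, i.e. $\mathbf{v}=-\mathbb{D}\nabla c$, and the surviving boundary term then yields $c=c^{\mathrm{p}}$ on $\partial\Omega$ (so the weakly imposed Dirichlet datum is recovered). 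Substituting $\mathbb{D}^{-1}\mathbf{v}+\nabla c=\mathbf{0}$ back into the $q$-equation annihilates the stabilization term and leaves $(q;\nabla\cdot\mathbf{v}-f)=0$ for all $q$, whence $\nabla\cdot\mathbf{v}=f$ a.e. Combining the two facts gives $-\nabla\cdot(\mathbb{D}\nabla c)=f$ a.e. in $\Omega$ together with $c=c^{\mathrm{p}}$ on $\partial\Omega$.

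With this elliptic identity in hand, I would prove the minimum principle by the weak maximum principle technique. Set $m:=\min_{\partial\Omega}c$ and introduce the truncation $\phi:=(m-c)^{+}\in H^{1}(\Omega)$; since $c\geq m$ on $\partial\Omega$ in the continuous (trace) sense, $\phi$ vanishes on $\partial\Omega$, so $\phi\in H^{1}_{0}(\Omega)$ and $\phi\geq 0$. Multiplying $-\nabla\cdot(\mathbb{D}\nabla c)=f\geq 0$ by $\phi$ and integrating by parts (legitimate since $c\in H^{2}$) gives $\int_{\Omega}\nabla\phi\cdot\mathbb{D}\nabla c\,d\mathbf{x}=\int_{\Omega}\phi f\,d\mathbf{x}\geq 0$. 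On the set $\{c<m\}$ one has $\nabla\phi=-\nabla c$ (and $\nabla\phi=\mathbf{0}$ a.e. elsewhere), so the left-hand side equals $-\int_{\{c<m\}}\nabla c\cdot\mathbb{D}\nabla c\,d\mathbf{x}\leq 0$ by positive definiteness of $\mathbb{D}$. Hence both sides vanish, and the ellipticity bound $\alpha_{1}>0$ forces $\nabla c=\mathbf{0}$ a.e. on $\{c<m\}$; thus $\nabla\phi\equiv\mathbf{0}$, and as $\phi\in H^{1}_{0}(\Omega)$ this gives $\phi\equiv 0$. Therefore $c\geq m$ a.e., and by continuity everywhere, which is exactly $\min_{\bar{\Omega}}c=\min_{\partial\Omega}c$.

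The hard part will be the two passages that exploit only the reduced regularity rather than $C^{2}$: justifying the integration by parts that produces the boundary term $(\mathbf{w}\cdot\mathbf{n};c^{\mathrm{p}}-c)_{\partial\Omega}$, arguing that $\mathbf{w}\cdot\mathbf{n}$ sweeps out a rich enough set of boundary traces (using $\Gamma^{\mathrm{N}}=\emptyset$) to conclude $c=c^{\mathrm{p}}$, and invoking the Sobolev truncation and chain-rule lemmas that guarantee $(m-c)^{+}\in H^{1}_{0}(\Omega)$ with the claimed gradient. By contrast, the algebraic collapse of the stabilization term and the sign bookkeeping in the energy estimate are routine.
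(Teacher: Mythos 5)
Your proof is correct and follows essentially the same route as the paper: both reduce the weak form to the identity $(q;\nabla\cdot\mathbf{v}-f)=0$ with $\mathbf{v}=-\mathbb{D}\nabla c$, both test with the truncation $(m-c)^{+}$, and both close the argument with the positive-definiteness of $\mathbb{D}$. The two places where you differ are worth noting. First, the paper simply \emph{defines} $\mathbf{v}:=-\mathbb{D}\nabla c$ using the assumed regularity of $c$ and then states that the weak form ``gets simplified,'' whereas you derive $\mathbb{D}^{-1}\mathbf{v}+\nabla c=\mathbf{0}$ and the boundary identity $c=c^{\mathrm{p}}$ on $\partial\Omega$ from the $\mathbf{w}$-equation by exploiting the consistency of the stabilization term; this is more work but it actually justifies the step the paper takes for granted (in particular, the paper's claim that $s=0$ on $\Gamma^{\mathrm{D}}$ silently uses $c=c^{\mathrm{p}}$ there, which you establish). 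Second, the endgames differ: the paper concludes from $(\nabla s;\mathbb{D}\nabla s)_{\mathbb{Y}}\leq 0$ that $s$ is \emph{constant} on $\mathbb{Y}=\{s>0\}$ and then invokes $\partial\Omega\subseteq\bar{\mathbb{Y}}$ to force that constant to be zero --- a step that is fragile, since $\mathbb{Y}$ need not be connected and its closure need not meet the boundary; your conclusion via $\nabla\phi=\mathbf{0}$ a.e.\ together with $\phi\in H^{1}_{0}(\Omega)$ (Poincar\'e) is the standard and cleaner way to finish, and sidesteps that issue entirely. The remaining technical points you flag (density of the normal traces $\mathbf{w}\cdot\mathbf{n}$ when $\Gamma^{\mathrm{N}}=\emptyset$, and the Sobolev chain rule for $(m-c)^{+}$) are indeed the only places requiring care, and both are standard.
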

\begin{proof}
  Since $c(\mathbf{x}) \in H^{2}(\Omega) \cap C^{1}(\Omega)
  \cap C^{0}(\bar{\Omega})$ we have 
  \begin{align}
    \label{Eqn:DMP_v_D_grad_c}
    \mathbf{v} := -\mathbb{D} \nabla c \in H^{1}(\Omega) 
    \cap C^{0}(\Omega) \subset \mathcal{V}
  \end{align}
  Define $m \in \mathbb{R}$ and a scalar field $s(\mathbf{x})$ 
  such that 
  \begin{align}
    \label{Eqn:DMP_definition_m}
    & m = \min_{\mathbf{x} \in \Gamma^{\mathrm{D}}} 
    c^{\mathrm{p}}(\mathbf{x}) \\ 
    \label{Eqn:DMP_definition_s}
    & s(\mathbf{x}) := \max[m - c(\mathbf{x}), 0] 
    \quad \forall \mathbf{x} \in \bar{\Omega}
  \end{align}
  One can show that the function $s(\mathbf{x})$ is piecewise $C^{1}(\Omega)$, 
  and belongs to $H^{1}(\Omega) \cap C^{0}(\bar{\Omega})$. By construction, we also 
  have 
  \begin{align}
    \label{Eqn:DMP_properties_of_s}
    s(\mathbf{x}) \geq 0 \quad 
    \forall \mathbf{x} \in \bar{\Omega}, \; 
    \mbox{and} \; s(\mathbf{x}) = 0 \quad \forall 
    \mathbf{x} \in \Gamma^{\mathrm{D}}
  \end{align}
  
  Using equation \eqref{Eqn:DMP_v_D_grad_c} (and also employing the divergence 
  theorem) equation \eqref{Eqn:Optim_VMS_weak_form} gets simplified to 
  \begin{align}
    \label{Eqn:DMP_weak_continuity_equation}
    (q; \nabla \cdot \mathbf{v} - f) = 0 
  \end{align}
  Since $s(\mathbf{x}) \in H^{1}(\Omega)$ and $s(\mathbf{x})
  = 0$ on $\Gamma^{\mathrm{D}}$, the scalar field $s(\mathbf{x})$ 
  is a legitimate choice for $q(\mathbf{x})$. Substituting 
  $s(\mathbf{x})$ in the place of $q(\mathbf{x})$, and 
  noting that $s(\mathbf{x}) \in H^{1}(\Omega)$ to allow 
  the application of the divergence theorem; we get
  \begin{align}
    \label{Eqn:DMP_substitute_s_instead_of_q}
    (s;\mathbf{v} \cdot \mathbf{n})_{\partial \Omega} - 
    (\nabla s; \mathbf{v}) - (s;f) = 0 
  \end{align}
  Since $\Gamma^{\mathrm{D}} = \partial \Omega$, $s(\mathbf{x}) 
  = 0 \; \mathrm{on} \; \Gamma^{\mathrm{D}}$, $s(\mathbf{x})
  \geq 0 \; \forall \mathbf{x} \in \bar{\Omega}$, and 
  $f(\mathbf{x}) \geq 0$ a.e. in $\Omega$; we 
  conclude that 
  \begin{align}
    \label{Eqn:DMP_s_v_inequality}
    (\nabla s; \mathbf{v}) = -(\nabla s; \mathbb{D} \nabla c) \leq 0 
  \end{align}
  
  To prove the theorem it is sufficient to show that $s(\mathbf{x})
  = 0 \; \forall \mathbf{x} \in \Omega$ (which implies that 
  $c(\mathbf{x}) \geq m \; \forall \mathbf{x} \in
  \bar{\Omega}$). Note that $s(\mathbf{x}) = m -
  c(\mathbf{x})$ unless $s(\mathbf{x}) = 0$. Let 
  \begin{align}
    \label{Eqn:DMP_definition_of_set_Y}
    \mathbb{Y} := \{\mathbf{x} \in \Omega \; | \;
    s(\mathbf{x}) \neq 0 \} 
    \equiv \{\mathbf{x} \in \Omega \; | \; s(\mathbf{x}) > 0 \}
  \end{align}
  The case $\mathbb{Y} = \emptyset$ is trivial. We now deal with the 
  case when $\mathbb{Y}$ is not empty. We first note that the 
  weak derivative of $s(\mathbf{x})$ is zero on $\Omega \backslash
  \mathbb{Y}$, and is $-\nabla c$ on $\mathbb{Y}$. This result 
  along with equation \eqref{Eqn:DMP_s_v_inequality} implies that 
  \begin{align}
    \label{Eqn:DMP_s_s_inequality}
    (\nabla s; \mathbb{D} \nabla s)_{\mathbb{Y}} \leq 0 
  \end{align}
  Since $\mathbb{D}(\mathbf{x})$ is a positive definite tensor,
  the above equation implies that $s(\mathbf{x}) = s_0 = 
  \mbox{constant}$ \emph{almost everywhere} in $\mathbb{Y}$. 
  Since $s(\mathbf{x})$ is continuous in $\bar{\Omega}$, we 
  conclude that $s(\mathbf{x}) = s_0$ \emph{everywhere} in
  $\mathbb{Y}$. 
  Since $\Gamma^{\mathrm{D}} = \partial \Omega 
  \subseteq \bar{\mathbb{Y}}$, and $s(\mathbf{x}) = 0 \; \mathrm{on}
  \; \Gamma^{\mathrm{D}}$ we conclude that $s(\mathbf{x}) = 0$ 
  on whole of $\mathbb{Y}$ and also on $\bar{\mathbb{Y}}$. Noting the
  fact that the function $s(\mathbf{x})$ vanishes on the set 
  complement of $\mathbb{Y}$, we conclude that $s(\mathbf{x}) 
  = 0$ on whole of $\bar{\Omega}$. Hence, we have proved the desired 
  result. 
\end{proof}

\subsection{Discrete equations}
The discretized finite element equations for the variational multiscale formulation 
(given by equation \eqref{Eqn:Optim_VMS_weak_form}) can be written as 
\begin{align}
  \label{Eqn:Optim_VMS_discrete}
  \left[\begin{array}{cc} 
      \boldsymbol{K}_{vv} & \boldsymbol{K}_{pv}^{T} \\ 
      \boldsymbol{K}_{pv} & -\boldsymbol{K}_{pp}
    \end{array} \right] 
  \left\{\begin{array}{c} 
    \boldsymbol{v} \\ 
    \boldsymbol{p} 
  \end{array} \right\}
  = \left\{\begin{array}{c}
      \boldsymbol{f}_{v} \\
      \boldsymbol{f}_{p} 
    \end{array} \right\}
\end{align}
where the matrices $\boldsymbol{K}_{vv}$ and $\boldsymbol{K}_{pp}$ are symmetric and positive 
definite, $\boldsymbol{v}$ denotes the (finite element) vector of nodal velocity (or auxiliary 
variable) degrees-of-freedom, and $\boldsymbol{p}$ denotes nodal vector of concentration 
degrees-of-freedom.
Comparing the weak form \eqref{Eqn:Optim_VMS_weak_form} and discrete equations \eqref{Eqn:Optim_VMS_discrete}, 
the matrices $\boldsymbol{K}_{vv}$, $\boldsymbol{K}_{pv}$ and $\boldsymbol{K}_{pp}$ are obtained after the finite 
element discretization of the terms $\frac{1}{2} \left(\mathbf{w};\mathbb{D}^{-1} \mathbf{v}\right)$, $-(q,\nabla 
\cdot \mathbf{v}) - \frac{1}{2} \left(\nabla q; \mathbf{v}\right)$ and $-\frac{1}{2}\left(\nabla q; \mathbb{D} 
\nabla c\right)$; respectively. Since equation \eqref{Eqn:Optim_VMS_weak_form} is written in symmetric form, 
$\boldsymbol{K}_{vp}$ (which comes from the term $-(\nabla \cdot \mathbf{w};c) - \frac{1}{2} \left(\mathbf{w};
\nabla c\right)$) will be equal to $\boldsymbol{K}_{pv}^{T}$. Note that, some of the terms in equation 
\eqref{Eqn:Optim_VMS_weak_form} are combined and simplified to obtain the terms presented in the previous line. 
For example, we have used the symmetry of $\mathbb{D}$ in obtaining the term $-\frac{1}{2}\left(\mathbf{w};\nabla 
c\right)$. The vectors $\boldsymbol{f}_{v}$ and $\boldsymbol{f}_p$ are, respectively, obtained from $-(\mathbf{w} 
\cdot \mathbf{n};c^{\mathrm{p}})_{\Gamma^{\mathrm{D}}}$ and $-(q;f)$ after the finite element discretization. 

The discrete form of equation \eqref{Eqn:Optim_continuous_extremization} can be written as 
\begin{align}
  \label{Eqn:Optim_discrete_extremization}
  \mathop{\mathrm{extremize}}_{\boldsymbol{v}, \; \boldsymbol{p}} \; \frac{1}{2} 
  \boldsymbol{v}^{T} \boldsymbol{K}_{vv} \boldsymbol{v} + \boldsymbol{p}^{T} 
  \boldsymbol{K}_{pv} \boldsymbol{v} - \frac{1}{2} \boldsymbol{p}^T \boldsymbol{K}_{pp} 
  \boldsymbol{p} - \boldsymbol{v}^{T} \boldsymbol{f}_{v} - \boldsymbol{p}^{T} \boldsymbol{f}_{p}
\end{align}
Similar to the continuous problem (that is, equations \eqref{Eqn:Optim_VMS_weak_form} 
and \eqref{Eqn:Optim_continuous_extremization} are equivalent), the stationarity of the 
above equation (minimizing with respect to $\boldsymbol{v}$ and maximizing with respect 
to $\boldsymbol{p}$) is equivalent to equation \eqref{Eqn:Optim_VMS_discrete}.  By eliminating 
$\boldsymbol{v}$, the Schur complement form of equation \eqref{Eqn:Optim_VMS_discrete} can be 
written as 
\begin{align}
  \left(\boldsymbol{K}_{pv} \boldsymbol{K}_{vv}^{-1} \boldsymbol{K}_{pv}^{T} + \boldsymbol{K}_{pp}\right) 
  \boldsymbol{p} = \boldsymbol{K}_{pv} \boldsymbol{K}_{vv}^{-1} \boldsymbol{f}_{v} - \boldsymbol{f}_{p}
\end{align}
Clearly, the Schur complement operator $\boldsymbol{K}_{pv} \boldsymbol{K}_{vv}^{-1} 
\boldsymbol{K}_{pv}^{T} + \boldsymbol{K}_{pp}$ is symmetric and positive definite. 
The discrete variational statement of the variational multiscale mixed formulation 
can be posed solely in terms of the variable $\boldsymbol{p}$, and takes the following 
form:
\begin{align}
  \label{Eqn:Optim_HVM_discrete_minimize_wrt_p}
  \mathop{\mathrm{minimize}}_{\boldsymbol{p}} \quad \frac{1}{2} \boldsymbol{p}^{T} 
  \left(\boldsymbol{K}_{pv} \boldsymbol{K}_{vv}^{-1} \boldsymbol{K}_{pv}^{T} + 
    \boldsymbol{K}_{pp} \right) \boldsymbol{p} - \boldsymbol{p}^{T} 
  \left(\boldsymbol{K}_{pv} \boldsymbol{K}_{vv}^{-1} \boldsymbol{f}_v - \boldsymbol{f}_p\right)
\end{align}
As mentioned earlier the variational multiscale mixed formulation does not (always) 
produce non-negative solutions for the non-negative forcing function and non-negative 
prescribed Dirichlet boundary condition. 

\begin{remark}
Unlike in the Raviart-Thomas formulation, the optimization problem \eqref{Eqn:Optim_HVM_discrete_minimize_wrt_p} 
is not the dual problem of equation \eqref{Eqn:Optim_discrete_extremization}. 
\end{remark}

\subsection{A non-negative formulation}
A non-negative formulation based on the variational multiscale formulation can be posed 
as the following constrained minimization problem 
\begin{align}
  \label{Eqn:Optim_VMS_final_objective}
  &\mathop{\mathrm{minimize}}_{\boldsymbol{p}} \quad \frac{1}{2} \boldsymbol{p}^{T} 
  \left(\boldsymbol{K}_{pv} \boldsymbol{K}_{vv}^{-1} \boldsymbol{K}_{pv}^{T} + 
    \boldsymbol{K}_{pp} \right) \boldsymbol{p} - \boldsymbol{p}^{T} 
  \left(\boldsymbol{K}_{pv} \boldsymbol{K}_{vv}^{-1} \boldsymbol{f}_v - 
    \boldsymbol{f}_p\right) \\
  \label{Eqn:Optim_VMS_final_constraint}
  &\mathop{\mbox{subject to}} \quad \boldsymbol{p} \succeq \boldsymbol{0}
\end{align}
The above constrained optimization problem belongs to the class of convex quadratic 
programming, and has a unique global minimizer. 

\begin{remark}
  The variational multiscale formulation (given by equation \eqref{Eqn:Optim_VMS_weak_form}), in general, does 
  not have the (element) local mass balance property. Specifically, one does not have the local mass balance 
  property under linear equal-order interpolation for both $c(\mathbf{x})$ and $\mathbf{v}(\mathbf{x})$, which 
  is employed in this paper. The corresponding non-negative formulation also does not possess the local mass 
  balance property.
\end{remark}

\begin{remark}
  The non-negative method proposed in this section is also applicable for 
  the mixed formulation based on the Galerkin/least-squares. As discussed 
  in Reference \cite{Nakshatrala_Turner_Hjelmstad_Masud_CMAME_2006_v195_p4036} 
  the variational multiscale and Galerkin/least-squares (GLS) mixed formulations 
  differ only in the definition of the stabilization parameter. That is, instead 
  of the term  
  \begin{align*}
    \frac{1}{2}\left(\mathbb{D}^{-1} \mathbf{w} + \nabla q; 
    \mathbb{D}\left(\mathbb{D}^{-1}\mathbf{v} + \nabla c\right)\right)
  \end{align*}
  which is the case for the variational multiscale formulation (see equation 
  \eqref{Eqn:Optim_VMS_weak_form}) we will have 
  \begin{align*}
    \left(\mathbb{D}^{-1} \mathbf{w} + \nabla q; \tau(\mathbf{x})
      \mathbb{D}\left(\mathbb{D}^{-1}\mathbf{v} + \nabla c\right)\right)
  \end{align*}
  and $\tau(\mathbf{x}) \geq 0$ for the GLS mixed formulation. 
  The discrete equations from the GLS formulation also takes the same form 
  as given in equation \eqref{Eqn:Optim_VMS_discrete}. 
\end{remark}

\begin{remark}
As mentioned earlier, non-negative solution is a special case of maximum-minimum principle. Some of the 
formulations presented in the literature produce non-negative solutions but still may violate the (general) 
discrete maximum-minimum principle. For example, see the non-negative formulation presented in Reference 
\cite{Lipnikov_Shashkov_Svyatskiy_Vassilevski_2007_v227_p492}. That is, these formulations avoid undershoots 
but may still produce overshoots. 
  
Though the focus of the present paper is on non-negative solutions, the proposed two non-negative 
optimization-based formulations can be easily extended to satisfy the (general) discrete 
maximum-minimum principle. 
To see this, let us first define the quantities $c_{\mathrm{min}}$ 
and $c_{\mathrm{max}}$ to be the minimum and maximum values of $\mathbf{c}(\mathbf{x})$ based 
on the (continuous) maximum-minimum principle.
Note that the maximum and minimum will occur on the boundary only when $f(\mathbf{x}) = 0$.
In order to enforce these properties in the discrete setting modify the constraints in the 
corresponding optimization problem statements (i.e., equations $\eqref{Eqn:Optim_dual_problem_DMP}_2$ 
and \eqref{Eqn:Optim_VMS_final_constraint})as 
\begin{align}
c_{\mathrm{min}} \boldsymbol{1} \preceq \boldsymbol{p} \preceq c_{\mathrm{max}} \boldsymbol{1}
\end{align}
  where $\boldsymbol{1}$ is a vector of ones of appropriate size. The resulting problems 
  will still belong to quadratic programming. Hence, the proposed mathematical framework 
  and solvers are still applicable. 
However, some interesting questions regarding the numerical performance of the solvers 
(active-set strategy, interior point methods) need to be addressed in future work. For example, 
since in the case of general DMP we have twice the number of constraints than that in the case 
of the non-negative formulation, how large is the violation of the local mass balance in the RT0 
formulation because of the additional constraints? How much additional computational cost will 
be incurred because of the additional constraints.
\end{remark}

\section{NUMERICAL RESULTS}
\label{Sec:Optim_numerical}
In this section we study the performance of the proposed formulations (with respect to non-negative 
solutions and local mass balance) on three canonical test problems. In our numerical experiments 
we have employed five different meshes -- Delaunay, $45$-degree, unstructured and well-centered 
triangular (WCT) meshes; and uniform four-node quadrilateral mesh. In two-dimensions, a well-centered 
triangulation means that all the triangles in the mesh are acute-angled (see Reference 
\cite{Vanderzee_Hirani_Guoy_Ramos_2008}). We will discuss more on WCT meshes in Section 
\ref{Subsec:Optim_WCT}.

The mesh layouts for the aforementioned meshes are shown in Figures \ref{Fig:Optim_typical_meshes}-\ref{Fig:Optim_mesh_problem_4}. 
For the chosen test problems, the variational multiscale and Raviart-Thomas formulations in general do not satisfy the discrete 
maximum-minimum principle. We now show that, for low-order finite elements, the proposed two non-negative mixed formulations 
produce non-negative solutions for all the three test problems and for all the chosen computational meshes. For the variational 
multiscale formulation, we have employed equal order interpolation for the $c$ and $\mathbf{v}$ fields in our numerical simulations. 
Note that (as discussed in Introduction) WCT, 45-degree, Delaunay and square meshes are sufficient to produce non-negative solutions 
for isotropic diffusion. However, these meshes may produce negative solutions in the case of anisotropic diffusion, which will be 
illustrated in this section. 

\subsection{Test problem \#1: Anisotropic and heterogeneous medium}
This test problem is taken from Reference \cite{LePotier_CRM_2005_v341_p787}. The 
computational domain is a bi-unit square with homogeneous Dirichlet boundary conditions. 
The forcing function is taken as 
\begin{align}
  \label{Eqn:Optim_Problem_2_forcing}
  f &= 
  \left\{
    \begin{array}{l}
      1 \quad \mathrm{if}\; (x,y) \in [3/8,5/8]^2 \\
      0 \quad \mathrm{otherwise}
    \end{array} \right.
\end{align}
The diffusivity tensor is given by 
\begin{align}
  \mathbb{D} = \left(
  \begin{array}{cc}
    y^2 + \epsilon x^2 & -(1 - \epsilon)xy \\
    -(1 - \epsilon) xy & \epsilon y^2 + x^2
  \end{array}\right)
\end{align}
In this paper we have taken the parameter $\epsilon = 0.05$. For this test problem, the 
numerical results for the concentration field $c(x,y)$ for various meshes using the 
variational multiscale and corresponding optimization-based formulations are shown 
in Figure \ref{Fig:Optim_HVM_Problem_2}. The contours of the vector field $\boldsymbol{v}$ 
are shown in Figure \ref{Fig:Optim_HVM_velocity_Problem_2}. The numerical results for the 
concentration using the RT0 and corresponding optimization-based formulation are presented 
in Figure \ref{Fig:Optim_RT0_Problem_2}. 

\subsection{Test problem \#2: Diffusion/dispersion tensor in subsurface flows}
This problem is taken from the groundwater modeling literature (for example, see 
Reference \cite{Pinder_Celia}). The diffusivity tensor is given by 
\begin{align}
  \label{Eqn:Optim_Problem_3_diffusivity}
  & \mathbb{D} = a_T \|\boldsymbol{\beta}\| \mathbb{I} + 
  \frac{a_L - a_T}{\|\boldsymbol{\beta}\|} \boldsymbol{\beta} \otimes \boldsymbol{\beta} 
\end{align}
where $\mathbb{I}$ denotes the second-order identity tensor, $\otimes$ the standard 
tensor product \cite{Chadwick}, $\boldsymbol{\beta}$ the velocity vector, and $\alpha_L$ 
and $\alpha_{T}$ are respectively the longitudinal and transverse diffusivity constants. 
Note that $\beta$ is a eigenvector of the diffusivity tensor given in equation 
\eqref{Eqn:Optim_Problem_3_diffusivity}.
In this paper we have taken $\alpha_{L} = 0.1$ and $\alpha_{T} = 0.01$, and the velocity 
vector to be 
\begin{align}
  \label{Eqn:Optim_Problem_3_velocity}
  \boldsymbol{\beta} = \boldsymbol{\mathrm{e}}_x + \boldsymbol{\mathrm{e}}_y 
\end{align}
where $\boldsymbol{\mathrm{e}}_{x}$ and $\boldsymbol{\mathrm{e}}_y$ are the standard 
unit vectors along $x$- and $y$-directions, respectively. The computational domain 
is again a bi-unit square with homogeneous Dirichlet boundary conditions. The forcing 
function is same as in test problem \#1 (see equation \eqref{Eqn:Optim_Problem_2_forcing}). 

For the chosen velocity field \eqref{Eqn:Optim_Problem_3_velocity} (which is aligned 
along south-west to north-east direction) by rotating the current coordinate system 
by $+45$ degrees (i.e., in the anticlockwise direction) the diffusivity tensor written 
in the transformed coordinate system will be isotropic. Therefore, a mesh aligned 
along $+45$-degree mesh should produce non-negative solutions for the chosen velocity 
field (which is illustrated in Tables \ref{Table:Optim_RT0_performance} and 
\ref{Table:Optim_VMS_performance}). However, one will get negative solutions using a 
$-45$-degree mesh. For this test problem, the numerical results for the variational multiscale 
and RT0 formulations and their corresponding optimization-based methods are presented in 
Figures \ref{Fig:Optim_HVM_Problem_3} and \ref{Fig:Optim_RT0_Problem_3}, respectively. 

\subsection{Test problem \#3: Non-smooth anisotropic solution} 
This problem is taken from Reference \cite{Lipnikov_Shashkov_Svyatskiy_Vassilevski_2007_v227_p492}. 
The computational domain is a bi-unit square with a square hole of dimension $[4/9,5/9] \times 
[4/9,5/9]$, which is pictorially described in Figure \ref{Fig:Optim_mesh_problem_4}. The forcing 
function is taken as $f(\boldsymbol{x}) = 0$. On the exterior boundary $c^{\mathrm{p}} (\boldsymbol{x}) 
= 0$ is prescribed, and on the interior boundary $c^{\mathrm{p}}(\boldsymbol{x}) = 2$ is prescribed. 
The diffusivity tensor is given by 
\begin{align}
  \mathbb{D} = 
  \left(\begin{array}{cc} 
      \cos(\theta) & \sin(\theta) \\ 
      -\sin(\theta) & \cos(\theta) \end{array} \right)
  \left(\begin{array}{cc} 
      k_1 & 0 \\ 
      0 & k_2 \end{array} \right)
  \left(\begin{array}{cc} 
      \cos(\theta) & -\sin(\theta) \\ 
      \sin(\theta) & \cos(\theta) \end{array} \right)
\end{align}
In this paper we have taken $k_1 = 1$, $k_2 = 100$ and $\theta = \pi / 6$, which are same as the 
values employed in Reference \cite{Lipnikov_Shashkov_Svyatskiy_Vassilevski_2007_v227_p492}. For 
this test problem, the performance of the variational multiscale and RT0 formulations and their 
corresponding optimization-based formulations are shown in Figure \ref{Fig:Optim_Results_Problem_4}. 
Also it is worth mentioning that (for this test problem) under the RT0 formulation more than $45\%$ of 
the computational domain has negative concentration, which is illustrated in Table 
\ref{Table:Optim_RT0_performance}. 

\subsection{Performance on a well-centered triangular mesh}
\label{Subsec:Optim_WCT}
In two dimensions, a well-centered triangulation means that each element contains its circumcenter, 
which is equivalent to saying that all elements are acute-angled triangles. A well-centered mesh in 
higher dimensions can be similarly defined \cite{Vanderzee_Hirani_Guoy_Ramos_2008}. Note that every 
WCT mesh is also a Delaunay mesh but not vice-versa. For further details on how to generate WCT meshes 
see Reference \cite{Vanderzee_Hirani_Guoy_Ramos_2008}.

It is well known that well-centered triangular (WCT) meshes have some advantages in solving some partial 
differential equations as they preserve some of the underlying mathematical structure. For 
example, as discussed in Introduction, a WCT mesh is sufficient to produce non-negative solutions for 
an \emph{isotropic} diffusion equation. In other words, a WCT mesh respects the discrete maximum-minimum principle 
thereby preserving this key underlying mathematical property. In addition, a WCT mesh enables construction of a 
compatible discretization of a Hodge star (a geometrical object in exterior calculus) \cite{Hirani_PhDThesis_Caltech_2003}. 
In Reference \cite{Hirani_Nakshatrala_Chaudhry_2008} this idea has been used to construct a numerical method for the 
mixed form of the diffusion equation that is locally and globally conservative, and also can exactly represent 
linear variation of concentration in a given computational domain. 

However, in this subsection we numerically show that for the RT0 and variational multiscale formulations, 
even a well-centered triangular (WCT) mesh is not sufficient to produce non-negative solutions in the 
case of full diffusivity tensor. We consider test problem \#1, and use the well-centered triangular mesh 
shown in Figure \ref{Fig:Optim_mesh_WCT}. The obtained numerical results for the concentration using the 
Raviart-Thomas and variational multiscale formulations are shown in Figure \ref{Fig:Optim_HVM_WCTmesh_Problem_2}. 
As one can see, there are regions of negative concentration (which are indicated in white color). The obtained 
numerical results using the corresponding optimization-based formulations are also shown in the figure, and 
(as expected) we have non-negative concentration in the whole domain. 

\subsection{Active set strategy and its numerical performance}
The two main classes of methods for solving quadratic programming problems are active set 
strategy and interior point methods. In this paper we employ the active set strategy, which 
is very effective for small to medium sized convex quadratic programming. For a detailed 
discussion on active set strategy (including a convergence proof) see Luenberger and Ye 
\cite[Section 12.4]{Luenberger_Ye_Nonlinear_Programming}, and for an algorithmic outline 
of the numerical method see Nocedal and Wright \cite[page 462]{Nocedal_Wright}. 

In Tables \ref{Table:Optim_RT0_active_set} and \ref{Table:Optim_VMS_active_set} we have 
studied the performance of active set strategy for the proposed non-negative formulations. 
We considered two cases for the initial active set. The first case is trivial, which is the 
empty set. In the second case, we have take the initial active set as the degrees-of-freedom 
that have negative concentration under the chosen formulation (either VMS or RT0). That is, 
initially one will solve a given problem using either the VMS or RT0 formulation, and then 
identify the nodes (in the case of VMS) or elements (in the case of RT0) that have negative 
concentration. The initial active set is taken as those degrees-of-freedom for concentration 
that have negative values. Based on numerical experiments we found that in many problems the 
second case takes fewer iterations. However, this is not the case always, which is illustrated 
in Tables \ref{Table:Optim_RT0_active_set} and \ref{Table:Optim_VMS_active_set}. Note that in 
these tables, the two choices for initial active set are denoted as `empty set' and `initial 
violated set.'

\subsection{Error in local mass balance under the optimization-based RT0 formulation}
\label{Subsec:Optim_numerical_local_mass_balance}
In subsection \ref{Subsec:Optim_RT0_non_negative} we have shown mathematically that one may have violation 
of local mass balance under the optimization-based RT0 formulation. Based on the KKT optimal conditions we 
have also shown that the violation of local mass balance can occur only in the form of \emph{artificial sinks} 
in some elements. These elements are those for which $\left(\boldsymbol{K}_{pv} \boldsymbol{v} - \boldsymbol{f}_p
\right)_i < 0$, where $i$ denotes the element number. 

In this subsection we study numerically the error in local mass balance (which is characterized by element sink 
strength). For test problems \#1 and \#2 the total source strength is $0.0625$ (which is equal to $\int_{\Omega} 
f \; \mathrm{d} \Omega$). For a given element $\Omega_e$ (with its boundary denoted by $\Gamma_e$) we calculate 
$\int_{\Omega_e} \nabla \cdot \boldsymbol{v} \; \mathrm{d} \Omega \equiv \int_{\Gamma_e} \boldsymbol{v} \cdot 
\boldsymbol{n} \; \mathrm{d} \Gamma$, which should be negative based on the KKT conditions. (As mentioned 
earlier, in the discrete finite element setting the element source/sink strength can be obtained by picking 
the corresponding component in the $\boldsymbol{K}_{pv} \boldsymbol{v} - \boldsymbol{f}_p$ vector.) Contours 
of these element sink strengths are plotted using the built-in cell-centered feature in Tecplot \cite{Tecplot360}. 
In Figures \ref{Fig:Optim_RT0_mass_error_Problem_2} and \ref{Fig:Optim_RT0_mass_error_Problem_3} we have 
shown the contours of element sink strength for various computational meshes for test problems \#1 and 
\#2, respectively. As one can see, for these representative test problems and meshes the violation of 
local mass balance is insignificant. 

For test problem \#3 the volumetric source is zero (i.e., $f(\mathbf{x}) = 0$ in $\Omega$). 
(The problem is driven by non-homogeneous Dirichlet boundary conditions.) Hence, we compare 
the element sink strength with the total flux along the boundary. Under the RT0 formulation (that is, 
without optimization) the total (integrated) flux along the interior and exterior boundaries 
are $-117.3852$ and $+117.3852$, respectively. (This is not surprising as the RT0 formulation 
has both local and global mass balance properties.) Under the optimization-based RT0 formulation, 
total integrated flux along the interior and exterior boundaries are $-117.5615$ and $+127.5694$, 
respectively. Maximum (in magnitude) element sink strength is $0.7477$, which is $0.636\%$ compared 
to the total flux along the interior boundary. The total sink strength by adding the individual 
element volumetric (sink) strengths is $-10.0079$, which matches the difference between the fluxes 
along interior and exterior boundaries. This means that the optimization-based RT0 formulation 
has the global mass balance property (but, as discussed earlier, does not possess local mass balance 
property). In Figure \ref{Fig:Optim_RT0_mass_error_Problem_4} we have shown the contours of element 
sink strength for test problem \#3.

\begin{table} 
  \caption{Performance of the RT0 formulation: minimum concentration produced 
    by the formulation, and percentage of elements that have negative concentrations 
    (denoted as \% of elements violated). \label{Table:Optim_RT0_performance}}
  \begin{tabular}{llcr} \hline
    Test problem & Mesh type & Min. conc. & \% of elements violated \\ \hline
    Problem \#1 & +45-degree   & -0.002510583   & 128/648  $\rightarrow$ 19.75\%  \\
    & Delaunay                 & -0.000011158   & 67/1800  $\rightarrow$ 3.72\%   \\
    & Well-centered            & -0.000824908   & 45/336   $\rightarrow$ 13.39\%  \\

    Problem \#2 & +45-degree   &  0.000000000 &  0/648   $\rightarrow$ 0.00\%   \\
    & -45-degree               & -0.006674991 &  216/648 $\rightarrow$ 33.33\%  \\
    & Delaunay                 & -0.000468342 &  71/1800 $\rightarrow$ 3.94\%   \\
    & Well-centered            & -0.000616864 &  42/336  $\rightarrow$ 12.50\%  \\ 

    Problem \#3 & Mesh in Figure \ref{Fig:Optim_mesh_problem_4} & -0.081453689 & 848/1868 $\rightarrow$ 45.40\% \\ \hline
  \end{tabular}
\end{table}

\begin{table} 
  \caption{Performance of the variational multiscale formulation: minimum 
    concentration produced by the formulation, and percentage of nodes 
    that have negative concentrations (denoted as \% of nodes violated). 
  \label{Table:Optim_VMS_performance}}
  \begin{tabular}{llcr} \hline
    Test problem & Mesh type & Min. conc. & \% of nodes violated \\ \hline
    Problem \#1 & +45-degree   & -0.000986744 & 30/361 $\rightarrow$ 8.31\%  \\
    & Delaunay                 & -0.000000531 & 11/961 $\rightarrow$ 1.14\%  \\
    & Well-centered            & -0.000056615 & 4/191  $\rightarrow$ 2.09\%  \\
    & Four-node quadrilateral  & -0.000000901 & 7/361  $\rightarrow$ 1.93\%  \\ 
    Problem \#2 & +45-degree   &  0.000000000 &  0/361 $\rightarrow$ 0.00\%    \\
    & -45-degree               & -0.000402642 &  40/361 $\rightarrow$ 11.08\%  \\
    & Delaunay                 & -0.000000941 &  12/961 $\rightarrow$ 1.25\%   \\
    & Well-centered            & -0.000007018 &  5/191 $\rightarrow$ 2.62\%    \\
    & Four-node quadrilateral  & -0.000000155 &  6/361 $\rightarrow$ 1.67\%    \\ 
    Problem \#3 & Mesh in Figure \ref{Fig:Optim_mesh_problem_4} & -0.004613415 & 264/998 $\rightarrow$ 26.45\% \\ \hline
  \end{tabular}
\end{table}

\begin{table}
  \caption{Performance of the active-set strategy for the RT0 formulation. We have
    employed the D2-RT0 problem given by equation \eqref{Eqn:Optim_dual_problem_DMP}.
    \label{Table:Optim_RT0_active_set}}
  \begin{tabular}{llc} \hline
    Test problem & Mesh type & Active set iterations \\
                 &      & (empty set, initial violated set) \\ \hline
    Problem \#1 & +45-degree   & (105,43)  \\
    & Delaunay                 & (25,53)   \\
    & Well-centered            & (31,20)   \\

    Problem \#2 & +45-degree   & N/A      \\
    & -45-degree               & (185,57) \\
    & Delaunay                 & (29,51)  \\
    & Well-centered            & (32,15)  \\
    
    Problem \#3 & Mesh in Figure \ref{Fig:Optim_mesh_problem_4} & (658,436) \\ \hline
    \end{tabular}
\end{table}

\begin{table} 
  \caption{Performance of the active-set strategy for the variational multiscale 
    formulation. We have employed the non-negative formulation given by equations 
    \eqref{Eqn:Optim_VMS_final_objective} and \eqref{Eqn:Optim_VMS_final_constraint}. 
    \label{Table:Optim_VMS_active_set}}
  \begin{tabular}{llc} \hline
    Test problem & Mesh type & Active set iterations \\ 
                 &      & (empty set, initial violated set) \\ \hline
    Problem \#1 & +45-degree   & (18,15)  \\
    & Delaunay                 & (8,5)    \\
    & Well-centered            & (4,2)    \\
    & Four-node quadrilateral  & (7,3)    \\ 
    Problem \#2 & +45-degree   & N/A     \\
    & -45-degree               & (25,17) \\
    & Delaunay                 & (6,9)   \\
    & Well-centered            & (5,2)   \\
    & Four-node quadrilateral  & (7,1)   \\ 
    Problem \#3 & Mesh in Figure \ref{Fig:Optim_mesh_problem_4} & (72,240) \\ \hline
  \end{tabular}
\end{table}

\subsection{$h$-Convergence analysis}
In this subsection we study the convergence of the proposed optimization-based methods with 
respect to mesh refinement. We use test problems \#1 and \#2, and employ $45$-degree triangular 
and four-node quadrilateral meshes. (As discussed earlier, we use $+45$-degree and $-45$-degree 
meshes for test problems \#1 and \#2, respectively.) Typical four-node quadrilateral and $45$-degree 
triangular meshes are shown in Figures \ref{Fig:Optim_mesh_Quad4} and \ref{Fig:Optim_typical_meshes}(b), 
respectively. We refine the mesh by increasing the number of nodes along each side. 

In Tables \ref{Table:Optim_HVM_T3_performance} and \ref{Table:Optim_HVM_Quad4_performance} we 
show the variation of minimum concentration and percentage of nodes that produce negative solutions 
with respect to mesh refinement under the VMS formulation for $45$-degree triangular and four-node 
quadrilateral meshes. In Table \ref{Table:Optim_RT0_45_degree_mesh_performance} we have shown 
the variation of minimum concentration and percentage of elements that produced non-negative solutions 
with respect to mesh refinement under the RT0 formulation. (Note that in the RT0 formulation, 
the concentration, $c(\mathbf{x})$, is a constant over each element.) As expected, the percentage 
of non-negative nodes and minimum concentration decrease with respect to mesh refinement, but 
still the persistent non-negative values and spatial extent of the violation prohibits the use of VMS 
and RT0 formulations in simulations for which transport is coupled with reactions. 

In Figures \ref{Fig:Optim_HVM_active_set_strategy} and \ref{Fig:Optim_RT0_active_set_strategy} we have 
compared the number of iterations taken by the active set strategy with respect to mesh refinement for 
the proposed optimization-based methods. As one can see from these figures, the number of iterations 
stabilized with respect to mesh refinement (that is, the strategy takes almost the same number of 
iterations as the mesh is refined). The optimization-based VMS method takes fewer active-set strategy 
iterations compared to the iterations taken by the optimization-based RT0 formulation. 

Figures \ref{Fig:Optim_HVM_CPU_time} and \ref{Fig:Optim_RT0_CPU_time}, respectively, compare the CPU 
time taken by the optimization-based VMS and RT0 methods with respect to mesh refinement. On the $y$-axis 
we plot the ratio between the \emph{additional} CPU time taken by the active-set strategy (or the optimization 
solver to obtain non-negative solution) and the CPU time taken by the corresponding underlying mixed formulation 
(either VMS or RT0 formulation). For the optimization-based VMS method, the additional cost to obtain non-negative 
solution using the four-node quadrilateral mesh is only a fraction of the computational cost of the VMS formulation. 
For the three-node triangular mesh, the additional cost to obtain the non-negative solution is nearly twice the cost of 
the VMS formulation. The optimization-based RT0 method takes relatively more additional CPU time to obtain non-negative 
solution, and the ratio between the additional CPU time and the CPU time taken by the RT0 formulation is nearly 5 for 
test problem \#1 and 20 for test problem \#2. This should not be surprising as in the RT0 formulation the primary 
variable (in our case, the concentration) is poorly approximated by its piecewise constant representation over each 
element. (Note that, in the VMS formulation the primary variable is $C^{0}$ continuous, that is, piecewise linear 
and continuous across elements.)
 
In Figure \ref{Fig:Optim_RT0_min_total_errors} we compare the error in the local mass balance with respect 
to mesh refinement for the RT0 formulation. Since, in the non-negative version of the RT0 formulation we 
always have artificial sinks (that is, the error in the local mass balance will always be negative), we have 
taken the negative of the error before taking the logarithm. We have plotted both the maximum (artificial) 
element sink strength (or error in local mass balance in an element) and also the total sink strength by 
summing the contribution from all elements. From Figure \ref{Fig:Optim_RT0_min_total_errors} one can see 
that, for the chosen problems, \emph{the error in the local mass balance decreases exponentially with respect 
to the element size.}

\newpage
\begin{table} 
\caption{Performance of the variational multiscale formulation using three-node 
triangular element with respect to mesh refinement. We have employed $+45$-degree 
and $-45$-degree meshes for test problems \#1 and \#2, respectively. 
\label{Table:Optim_HVM_T3_performance}}
\begin{tabular}{lccr} \hline
Test problem  &  \# of nodes per side  &  Min. Conc.  &	 \% nodes violated  \\ \hline
Problem \#1   &  10    &  -3.19E-003  &   9/100	   $\rightarrow$  9\%     \\
              &  19    &  -9.87E-004  &	  30/361   $\rightarrow$  8.31\%  \\
              &  28    &  -1.01E-004  &	  54/784   $\rightarrow$  6.89\%  \\
              &  37    &  -8.41E-006  &	  64/1369  $\rightarrow$  4.67\%  \\
              &  46    &  -1.05E-006  &	  71/2116  $\rightarrow$  3.36\%  \\
              &  55    &  -2.05E-007  &	  71/3025  $\rightarrow$  2.35\%  \\
              &  64    &  -7.38E-008  &	  75/4096  $\rightarrow$  1.83\%  \\
              &  73    &  -3.46E-008  &	  77/5329  $\rightarrow$  1.44\%  \\ \hline
Problem \#2  &  10  &	-1.23E-003  &	6/100   $\rightarrow$ 6\%       \\
             &  19  &	-4.03E-004  &	40/361  $\rightarrow$ 11.08\%   \\
             &  28  &	-4.72E-005  &	66/784  $\rightarrow$ 8.42\%    \\
             &  37  &	-5.41E-006  &	66/1369	$\rightarrow$ 4.82\%    \\
             &  46  &	-9.57E-007  &	66/2116	$\rightarrow$ 3.12\%    \\
             &  55  &	-2.40E-007  &	66/3025	$\rightarrow$ 2.18\%    \\
             &  64  &	-8.50E-008  &	66/4096	$\rightarrow$ 1.61\%    \\
             &  73  &	-3.45E-008  &	66/5329	$\rightarrow$ 1.24\%    \\ \hline
\end{tabular}
\end{table}

\begin{table} 
\caption{Performance of the variational multiscale formulation using 
four-node quadrilateral element with respect to mesh refinement. 
\label{Table:Optim_HVM_Quad4_performance}}
\begin{tabular}{lccr} \hline
Test problem & \# of nodes per side & Min. conc. & \% of nodes violated \\ \hline
Problem \#1 & 11  &  -1.15E-004  &  7/121   $\rightarrow$  5.79\%  \\
            & 21  &  -2.02E-007  &  8/441   $\rightarrow$  1.81\%  \\
            & 31  &  -7.17E-009  &  8/961   $\rightarrow$  0.83\%  \\
            & 41  &  -1.01E-009  &  9/1681  $\rightarrow$  0.54\%  \\
            & 51  &  -2.32E-010  &  9/2601  $\rightarrow$  0.35\%  \\ \hline
Problem \#2  & 11  &   -1.15E-004  &  6/121  $\rightarrow$  4.96\%  \\
             & 21  &   -2.02E-007  &  6/441  $\rightarrow$  1.36\%  \\ 
             & 31  &   -7.17E-009  &  6/961  $\rightarrow$  0.62\%  \\
             & 41  &   -1.01E-009  &  6/1681 $\rightarrow$  0.36\%  \\
             & 51  &   -2.32E-010  &  6/2601 $\rightarrow$  0.23\%  \\ \hline
\end{tabular}
\end{table}

\begin{table} 
\caption{Performance of the RT0 formulation with respect to mesh refinement. 
We have used $+45$-degree and $-45$-degree meshes for test problems \#1 and 
\#2, respectively. \label{Table:Optim_RT0_45_degree_mesh_performance}}
\begin{tabular}{lccr} \hline
Test problem & \# of nodes per side & Min. conc. & \% of elements violated \\ \hline
Problem \#1 & 10  &  -0.029349220  &	48/162    $\rightarrow$  29.63\% \\
            & 19  &  -0.002510583  &	128/648	  $\rightarrow$  19.75\% \\
            & 28  &  -0.000065471  &	158/1458  $\rightarrow$  10.84\% \\
            & 37  &  -0.000027412  &	194/2592  $\rightarrow$   7.48\% \\
            & 46  &  -0.000012794  &	224/4050  $\rightarrow$   5.53\% \\
            & 55  &  -0.000006529  &	248/5832  $\rightarrow$	  4.25\% \\ \hline
Problem \#2 & 10  &   -0.039682770  &	64/162	  $\rightarrow$ 39.51\% \\
            & 19  &   -0.006674991  &	216/648	  $\rightarrow$ 33.33\% \\
            & 28  &   -0.001901262  &	384/1458  $\rightarrow$	26.34\% \\
            & 37  &   -0.000756689  &	504/2592  $\rightarrow$	19.44\% \\
            & 46  &   -0.000337093  &	552/4050  $\rightarrow$	13.63\% \\
            & 55  &   -0.000140215  &	576/5832  $\rightarrow$	 9.88\% \\ \hline
  \end{tabular}
\end{table}

\section{CONCLUSIONS}
\label{Sec:Optim_conclusions}
Tensorial diffusion problems arise in a variety of important engineering and scientific 
applications.  Although the continuous problem satisfies a maximum-minimum principle, most 
numerical approximations fail to satisfy this principle in a discrete sense on arbitrary meshes.  
For some applications, violation of the discrete maximum-minimum principle can be problematic 
due to physically meaningless negative values of the dependent variable.

In this paper, we proposed two non-negative low-order mixed finite element formulations for 
the tensorial diffusion equation. (That is, the proposed formulations provide non-negative 
numerical solutions for linear, bilinear and trilinear finite elements.) This is achieved by 
rewriting the formulations as constrained optimization problems. In both the cases, the problem 
belongs to convex quadratic programming, which can be effectively solved using existing numerical 
optimization solvers (e.g., active set strategy and interior point methods).  

One of the formulations is based on the variational multiscale formulation, and the other is based 
on the lowest-order Raviart-Thomas spaces (that is, the RT0 element). We have demonstrated that the 
variational multiscale formulation satisfies a continuous maximum-minimum principle. In the case of 
non-negative formulation based on Raviart-Thomas spaces, two different optimization problems are 
presented -- the primal and dual problems. From the optimization theory it has been inferred that 
these two problems are equivalent. In addition, from the Karush-Kuhn-Tucker optimality conditions 
it is inferred that one \emph{may} have violation of (element) local mass balance in those elements 
that have zero concentration. These violations of local mass balance are in some limited part of the 
domain, and the violations are small for the test cases studies here. 

We have studied the convergence properties of the proposed optimization-based methods. Through numerical 
experiments we have shown that the error in local mass balance under the optimization-based RT0 method 
decreases exponentially with respect to mesh refinement. We have also studied the performance of active-set 
strategy method for solving the resulting convex quadratic programming problems. The performance of the 
proposed non-negative formulations is illustrated on three representative problems, and the formulations 
performed well.

One note worthy feature is that existing solvers based on the variational multiscale and RT0 formulations 
can be easily extended to implement the proposed optimization-based non-negative formulations. Designing 
a non-negative (stabilized) mixed formulation that also possesses local mass balance property is part of 
our future work.

\section*{APPENDIX: SOME CLASSICAL RESULTS ON DISCRETE MAXIMUM-MINIMUM PRINCIPLE} 
One of the early works on DMP dating from the 1960s is by Varga 
\cite{Varga_Matrix_Iterative_Analysis,Varga_SIAMJNA_1966_v3_p355}, and was presented in 
the context of finite difference schemes. To the authors' knowledge, the initial work 
on DMP in the context of the finite element method was done by Ciarlet and Raviart 
\cite{Ciarlet_Raviart_CMAME_1973_v2_p17}. (Another relevant work by Ciarlet is 
\cite{Ciarlet_AequationesMath_1970_v4_p338}, which was in the context of finite difference 
operators.) Reference \cite{Ciarlet_Raviart_CMAME_1973_v2_p17} addressed linear simplicial 
finite elements, and considered the classical Galerkin single-field formulation for the 
Poisson and Helmholtz equations.  

As mentioned earlier, classical finite element formulations do not satisfy the DMP on 
general meshes for full diffusivity tensor. The formulations which satisfy DMP impose 
severe restrictions on both meshes and coefficients of the diffusivity tensor. We now 
outline some of the classical results that are available on DMP in the context of the 
finite element method for a \emph{scalar} diffusion equation.
\begin{itemize}
\item For linear simplicial elements, Ciarlet and Raviart \cite{Ciarlet_Raviart_CMAME_1973_v2_p17} 
  have shown that non-obtuseness is \emph{sufficient} to satisfy DMP.
\item Christie and Hall \cite{Christie_Hall_IJNME_1984_v20_p549} have presented \emph{sufficient} 
  conditions for bilinear finite elements to satisfy the DMP under a homogeneous forcing function. 
  The results can be summarized as follows. For a non-uniform rectangular mesh (see Figure 
  \ref{Fig:Optim_non_uniform_rectangular_mesh}), the DMP will be satisfied provided 
  \begin{align}
    h_{1} h_{2} \leq \frac{1}{2} \mathrm{max}\left(k_{1}^2, \; k_{2}^2\right) \quad 
    \mathrm{and} \quad k_{1} k_{2} \leq \frac{1}{2} \mathrm{max}\left(h_{1}^2, \; h_{2}^2\right) 
  \end{align}
  This implies that a uniform rectangular mesh (that is, $h_1 = h_2$ and $k_1 = k_2$) will 
  satisfy the DMP provided 
  \begin{align}
    \frac{1}{\sqrt{2}} k_1 \leq h_1 \leq \sqrt{2} \; k_1
  \end{align}
  This further implies that a mesh with squares (that is, $h_1 = h_2 = k_1 = k_2$) will 
  always satisfy the DMP for the case of a scalar diffusion equation. 
\item Vanselow \cite{Vanselow_AppMath_2001_v46_p13} has shown that a Delaunay triangulation 
  along with an additional condition on boundary nodes are sufficient for the DMP under the 
  classical single-field Galerkin formulation. We now outline how this additional condition 
  looks for a convex domain. To this end, let $P_1$ and $P_2$ be two neighboring nodes on 
  the boundary. Let us denote their spatial coordinates as 
  $\mathbf{x}_1$ and $\mathbf{x}_2$, and define $\tilde{\mathbf{x}} := 
  \left(\mathbf{x}_1 + \mathbf{x}_2\right)/2$. Then the additional condition 
  can be written as 
  \begin{align}
    \| \mathbf{x}_i - \tilde{\mathbf{x}} \|_2 \leq \|\mathbf{x}_{Q} - 
    \tilde{\mathbf{x}} \|_{2} \; \mbox{for all nodes $Q$ in the triangulation with} \; 
    Q \neq P_i, \; i = 1, 2
  \end{align}
  where $\mathbf{x}_{Q}$ denotes the spatial coordinates of the node $Q$. A similar condition is required 
  for a non-convex domain. In addition, it has also been shown that under some weak additional assumptions on 
  the triangulation, these conditions are necessary \cite[Section 4]{Vanselow_AppMath_2001_v46_p13}. 
\end{itemize}

\begin{figure}
  \psfrag{h1}{$h_1$}
  \psfrag{h2}{$h_2$}
  \psfrag{k1}{$k_1$}
  \psfrag{k2}{$k_2$}
  \includegraphics[scale=0.5]{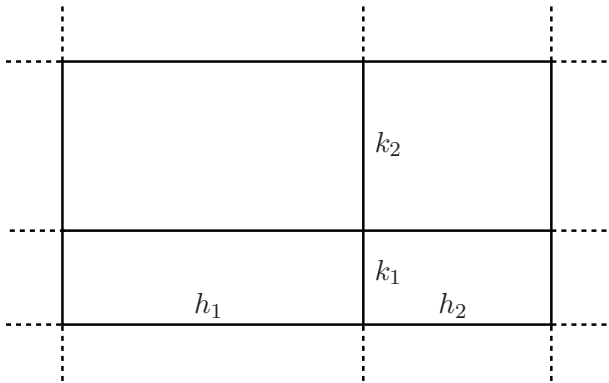}
  \caption{Non-uniform rectangular mesh where $h_{1}$ and $h_{2}$ denote the length 
    of horizontal edges of two neighboring elements. Similarly, $k_{1}$ and $k_{2}$ 
    are for corresponding vertical sides. \label{Fig:Optim_non_uniform_rectangular_mesh}}
\end{figure}

  \section*{ACKNOWLEDGMENTS}
  The research reported herein was supported by the Department of
  Energy through a SciDAC-2 project (Grant No. DOE DE-FC02-07ER64323). 
  This support is gratefully acknowledged. The opinions expressed in 
  this paper are those of the authors and do not necessarily reflect 
  that of the sponsor. We also thank Professor Anil Hirani, University 
  of Illinois at Urbana-Champaign, for providing us with the well-centered 
  triangular mesh that is used in this paper. The first author is grateful 
  to Dr. Vit Pr\.{u}\v{s}a for valuable suggestions. 
  
\bibliographystyle{unsrt}
\bibliography{Books,Master_References}

\newpage
\newpage 

\begin{figure}
  \subfigure[Delaunay mesh]{ 
    \includegraphics[scale=0.45]{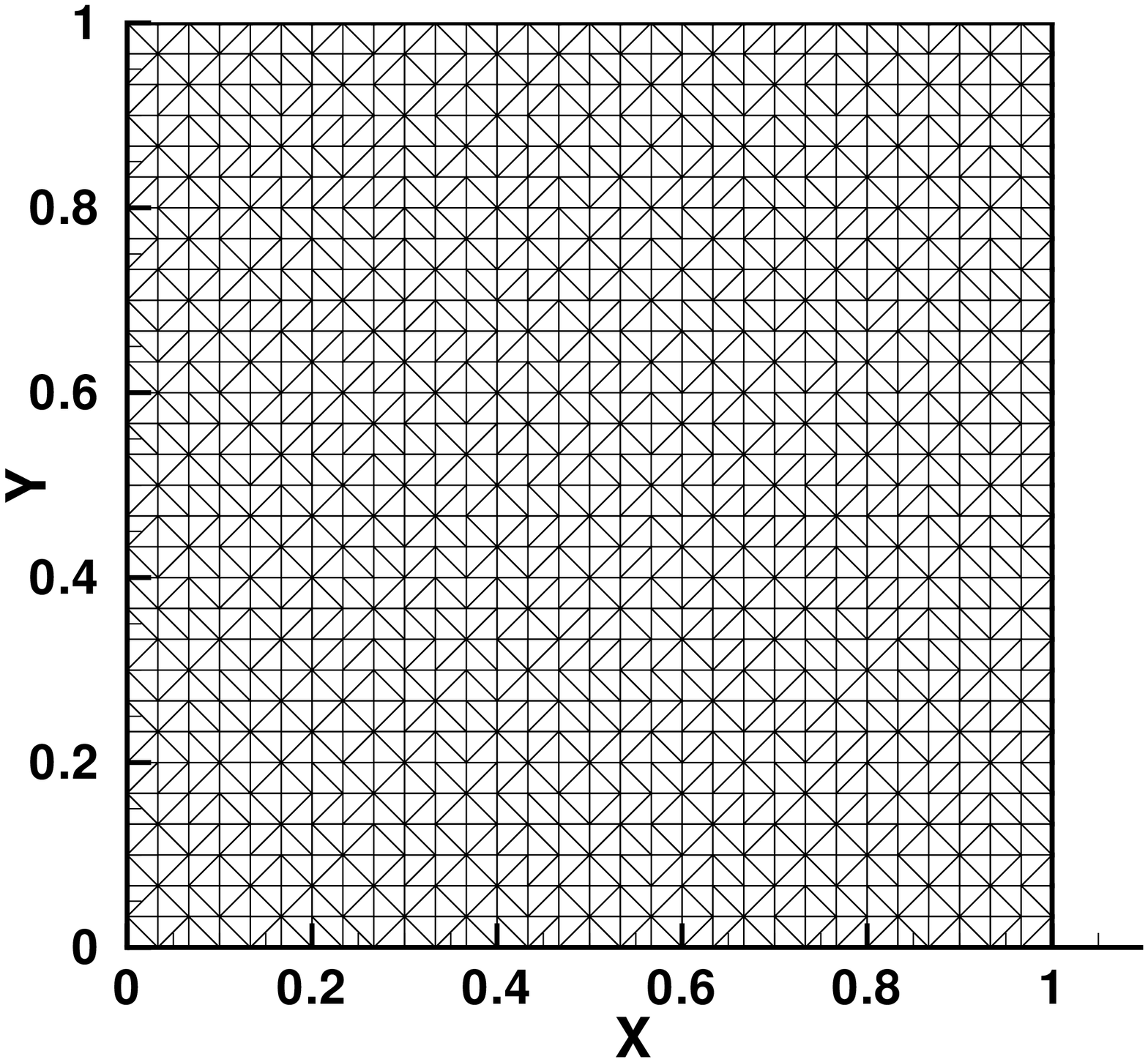}}
  \subfigure[+45-degree mesh]{
    \includegraphics[scale=0.45]{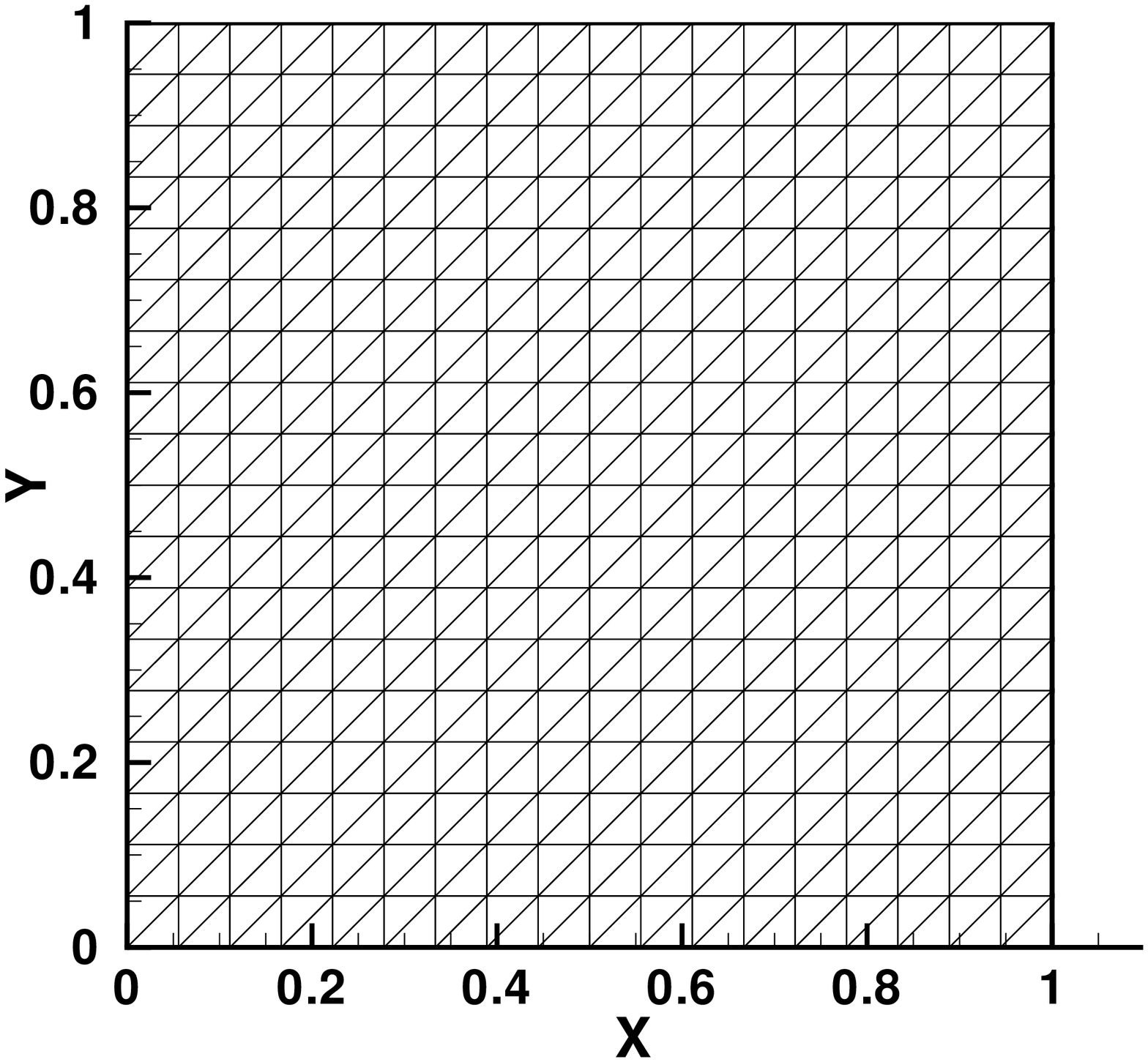}}
  \caption{Typical triangular meshes [Delaunay (top) and $+45$-degree (bottom)] 
    that are used for test problems 1-3 are shown in the figure. In addition, $-45$-degree 
    mesh is also used in numerical simulations, which is similar to the $+45$-degree except 
    that the diagonals run along south-east to north-west direction. \label{Fig:Optim_typical_meshes}}
\end{figure}

\begin{figure}
  \includegraphics[scale=0.45]{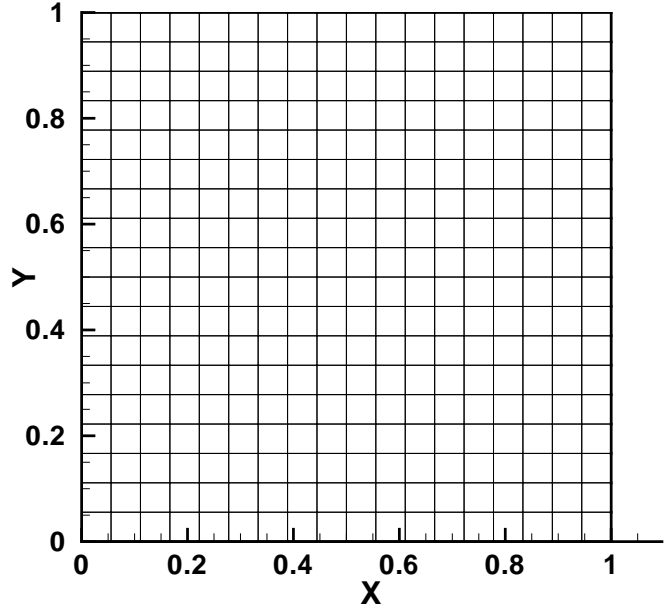}
    \caption{Typical four-node quadrilateral mesh using in numerical simulations. 
    \label{Fig:Optim_mesh_Quad4}}
\end{figure}

\begin{figure}
  \includegraphics[scale=0.45]{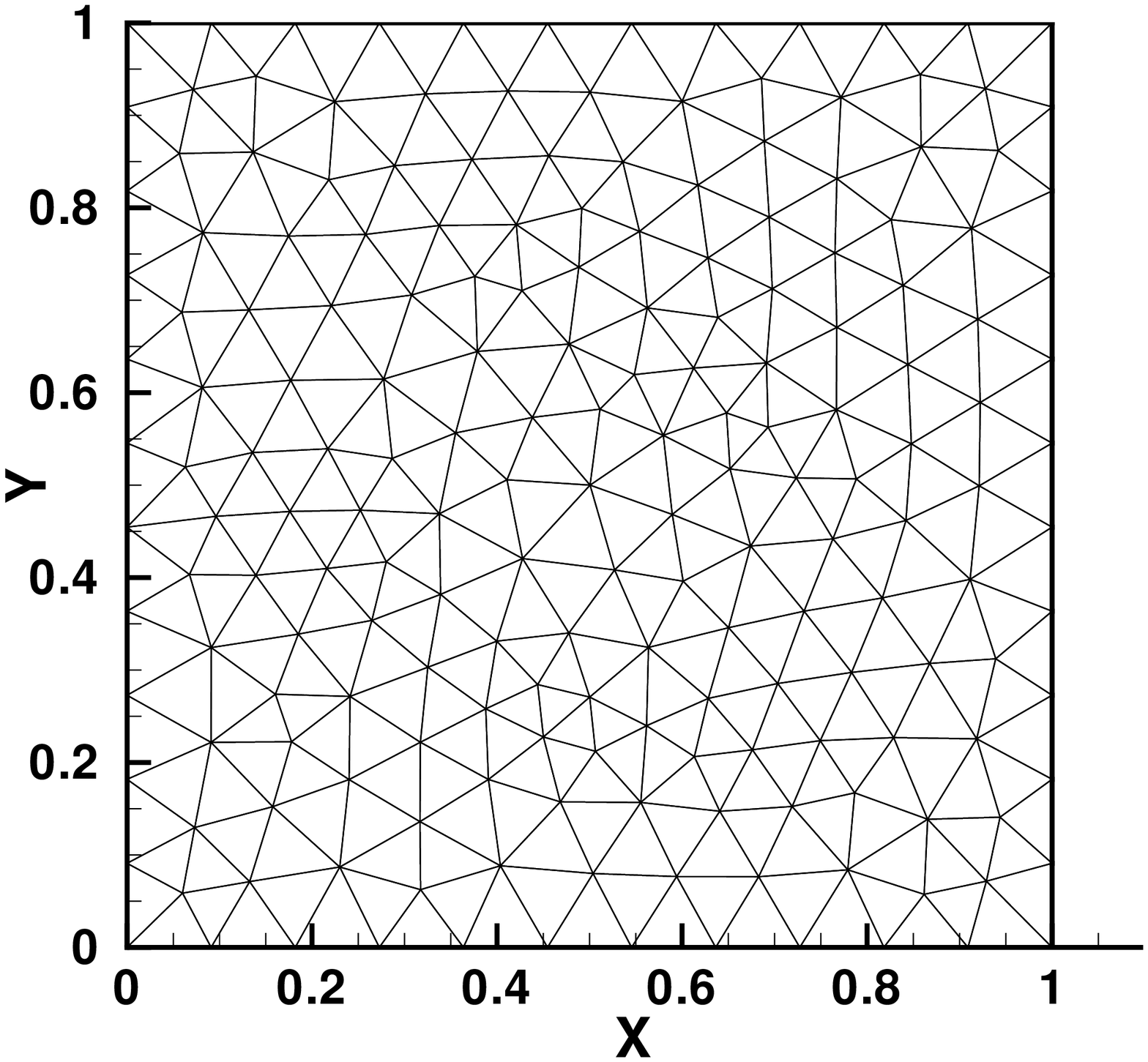}
    \caption{Well-centered triangular (WCT) mesh. \label{Fig:Optim_mesh_WCT}}
\end{figure}

\begin{figure}
  \includegraphics[scale=0.45]{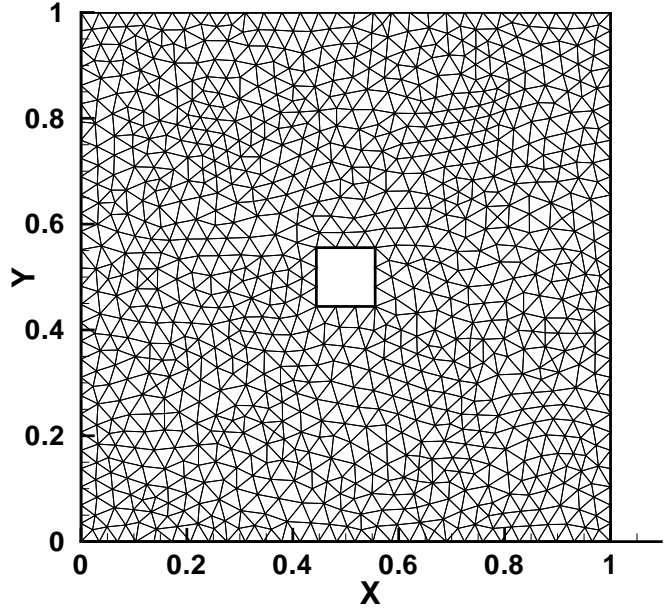}
  \caption{Pictorial description of test problem \#3: Computational domain is the bi-unit square 
    with a square hole $[4/9,5/9] \times [4/9,5/9]$. On the exterior boundary $c^{\mathrm{p}}
    (\boldsymbol{x}) = 0$ is prescribed. On the interior boundary $c^{\mathrm{p}}(\boldsymbol{x}) 
    = 2$ is prescribed. The computational domain is triangulated using Gmsh \cite{gmsh.www}.  
    \label{Fig:Optim_mesh_problem_4}}
\end{figure}


\begin{figure}[htbp]
  \subfigure{
    \includegraphics[scale=0.32]{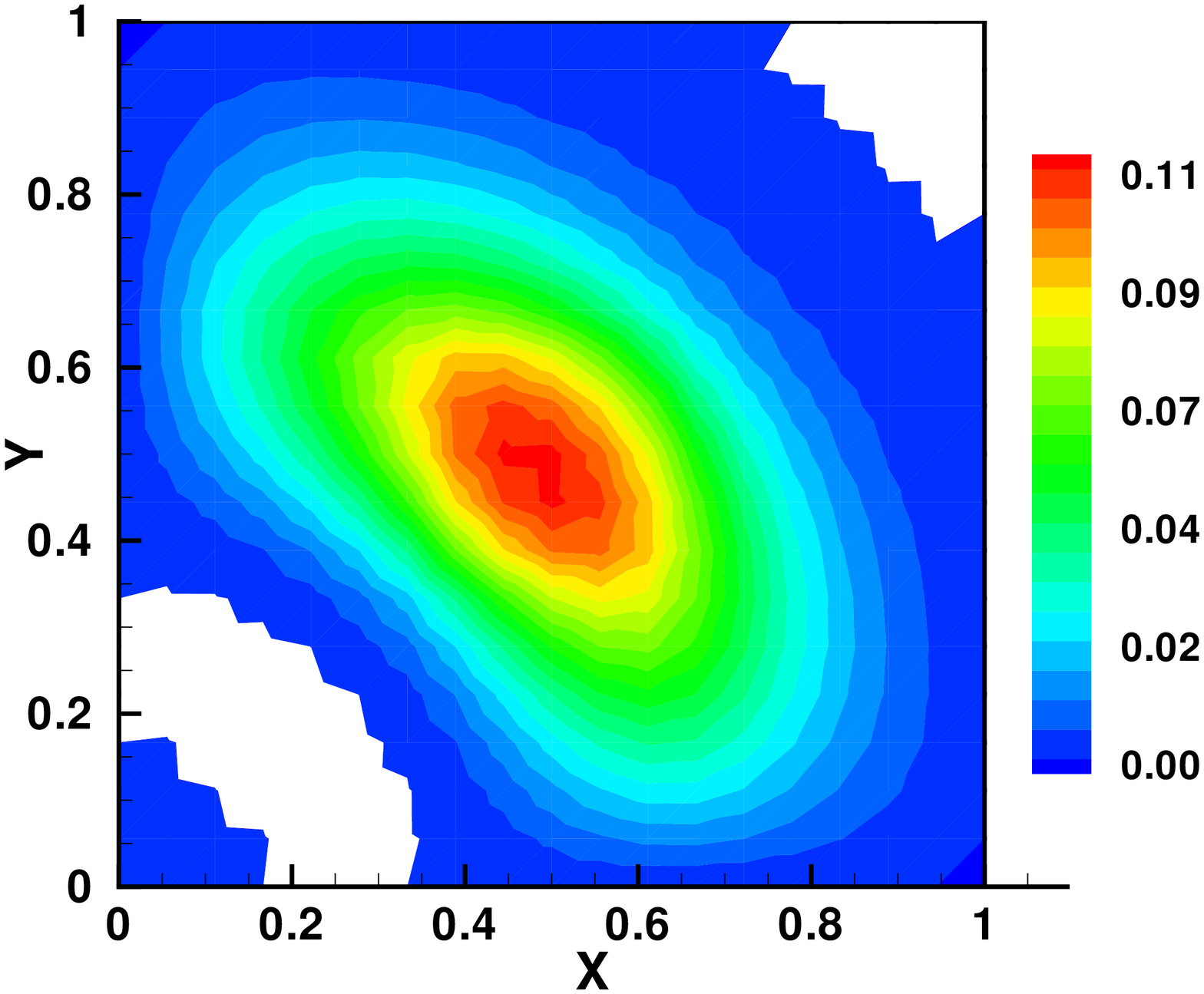}}
  \subfigure{
    \includegraphics[scale=0.32]{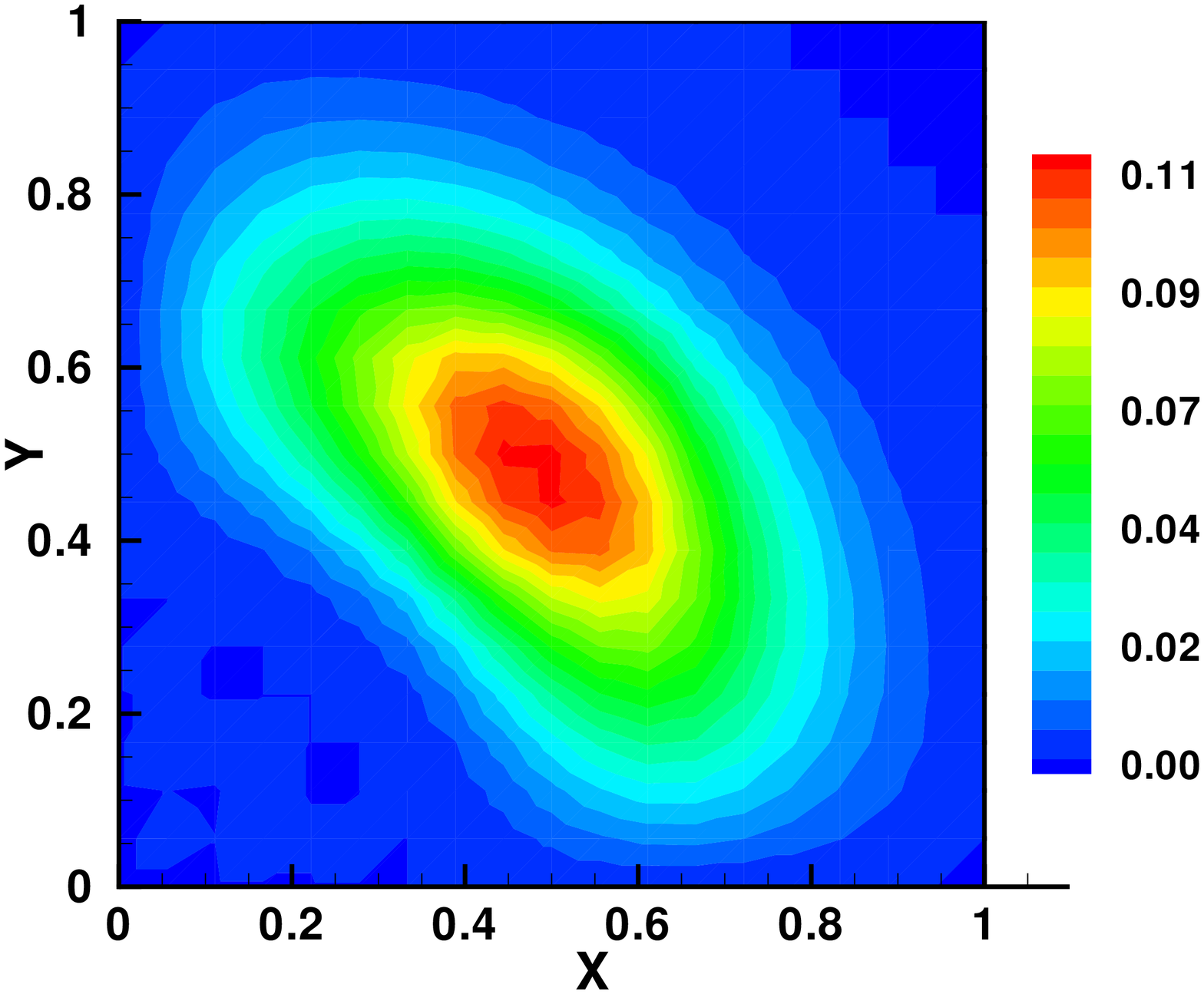}}
 \subfigure{
    \includegraphics[scale=0.32]{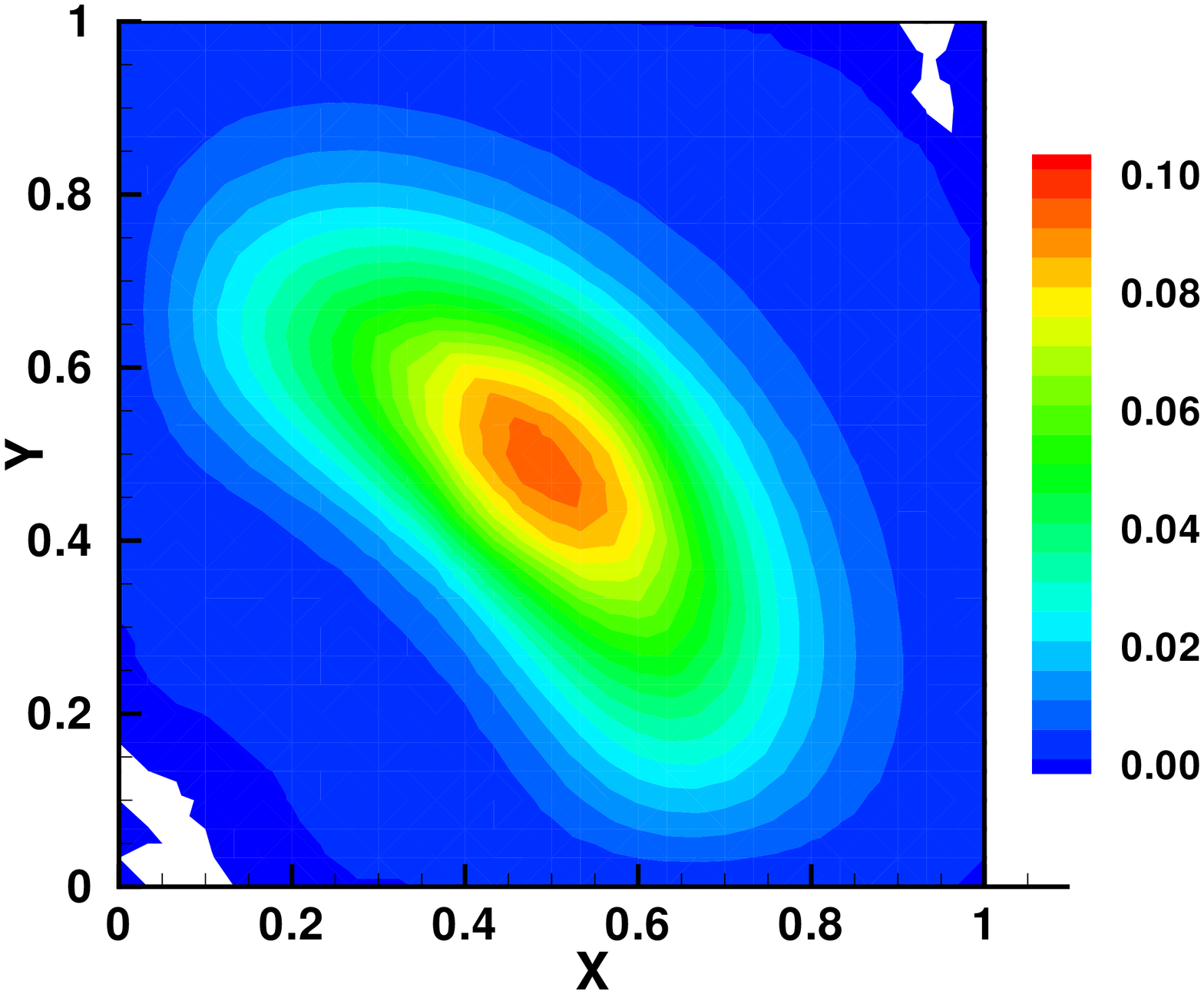}}
  \subfigure{
    \includegraphics[scale=0.32]{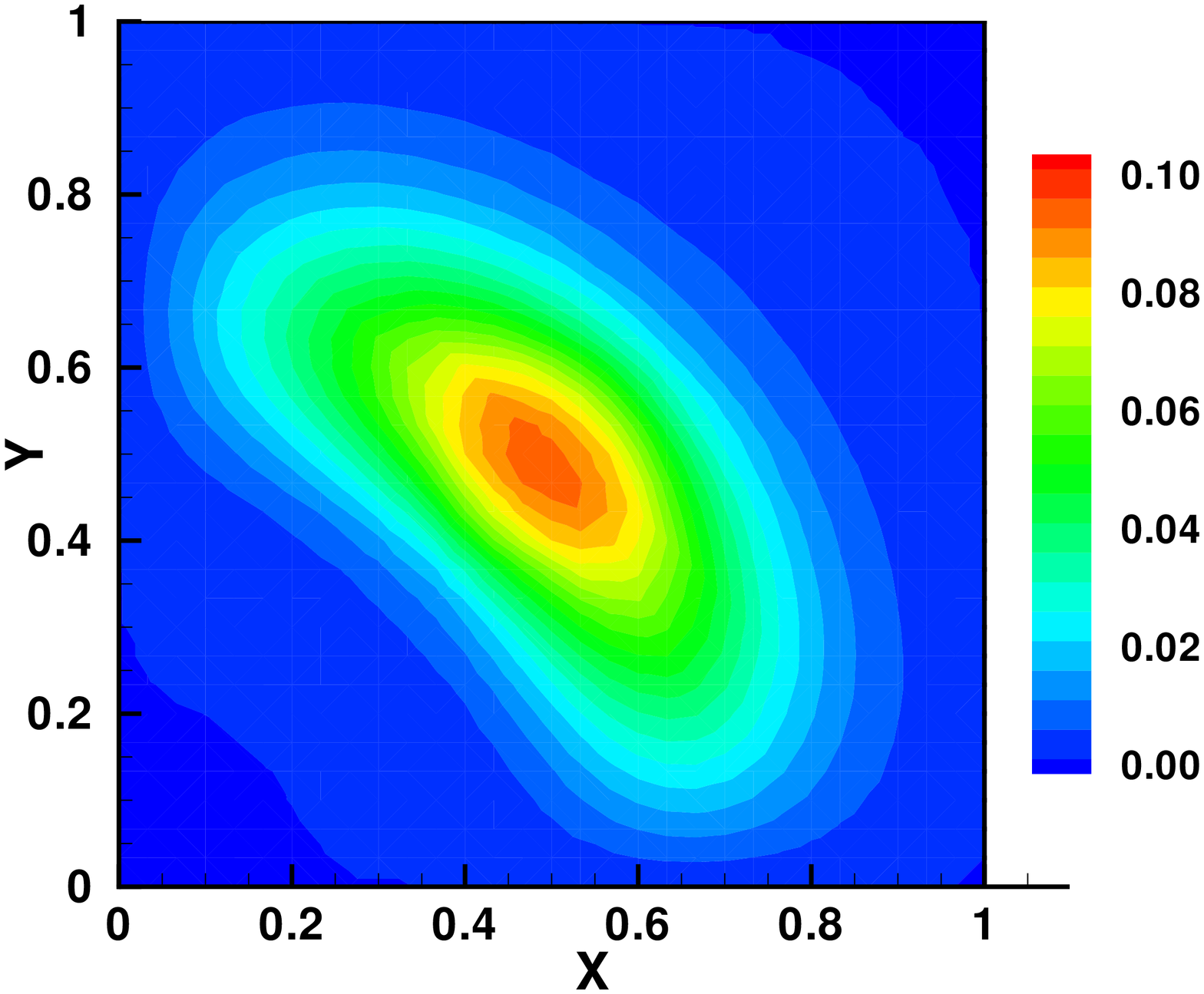}}
  \subfigure{
    \includegraphics[scale=0.32]{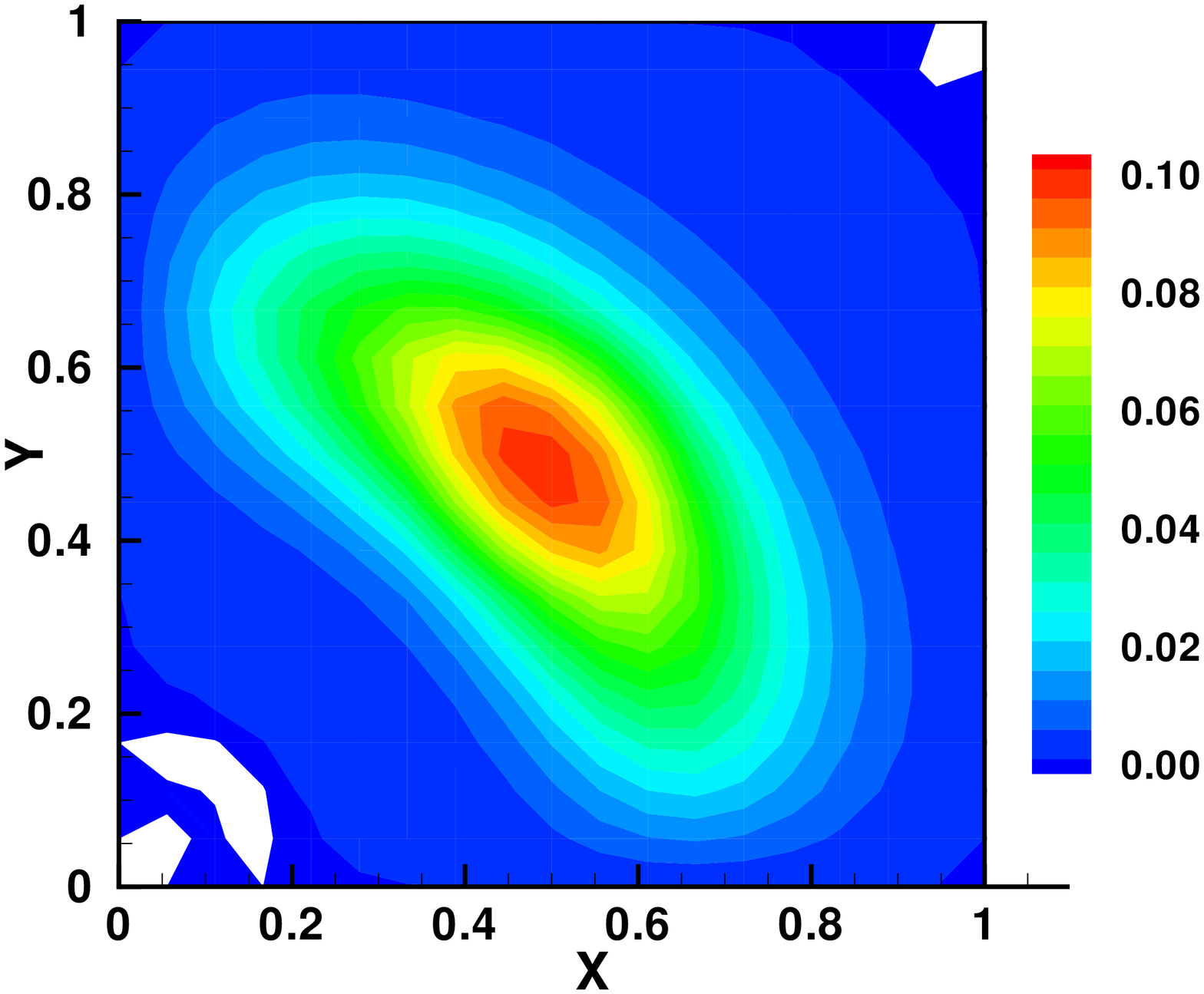}}
  \subfigure{
    \includegraphics[scale=0.32]{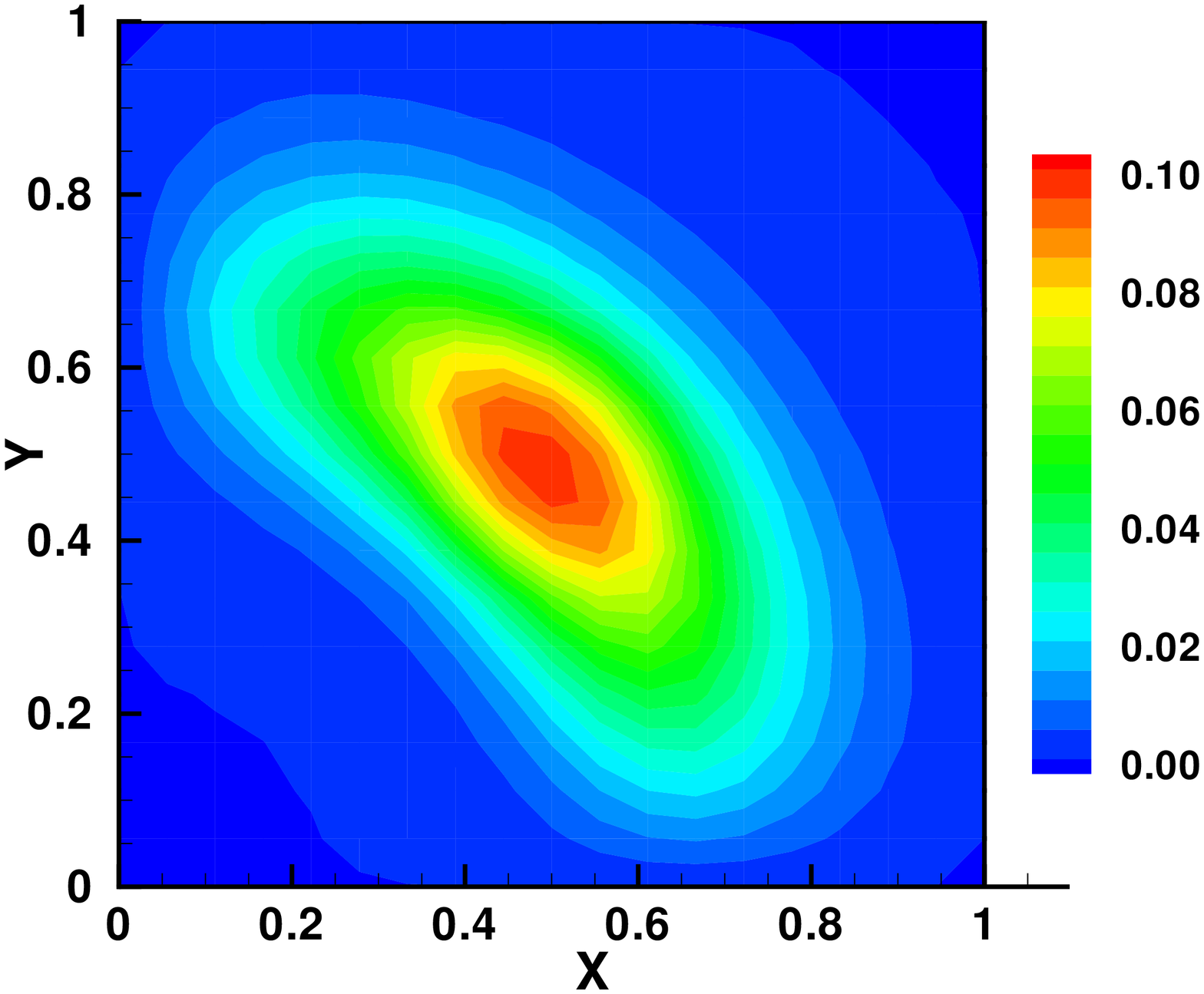}}
  \caption{Test problem \#1: Performance of the variational multiscale (left) and 
    corresponding optimization-based (right) formulations. In the numerical simulations 
    45-degree (top), Delaunay (middle) and four-node quadrilateral (bottom) meshes are 
    employed. The regions that have negative concentration are indicated in white color. 
    \label{Fig:Optim_HVM_Problem_2}}
\end{figure}

\begin{figure}[htbp]
  \subfigure{
    \includegraphics[scale=0.32]{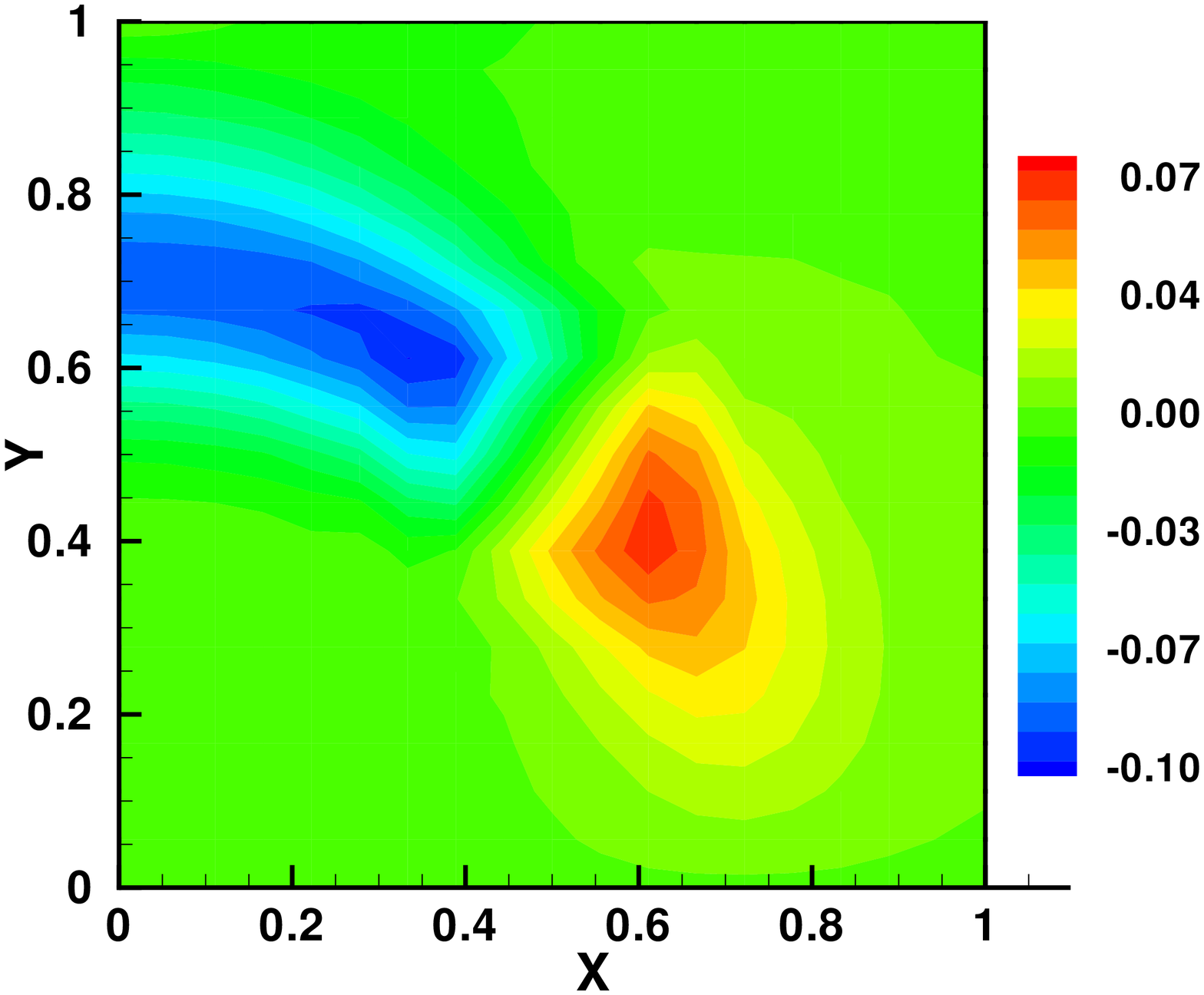}}
  \subfigure{
    \includegraphics[scale=0.32]{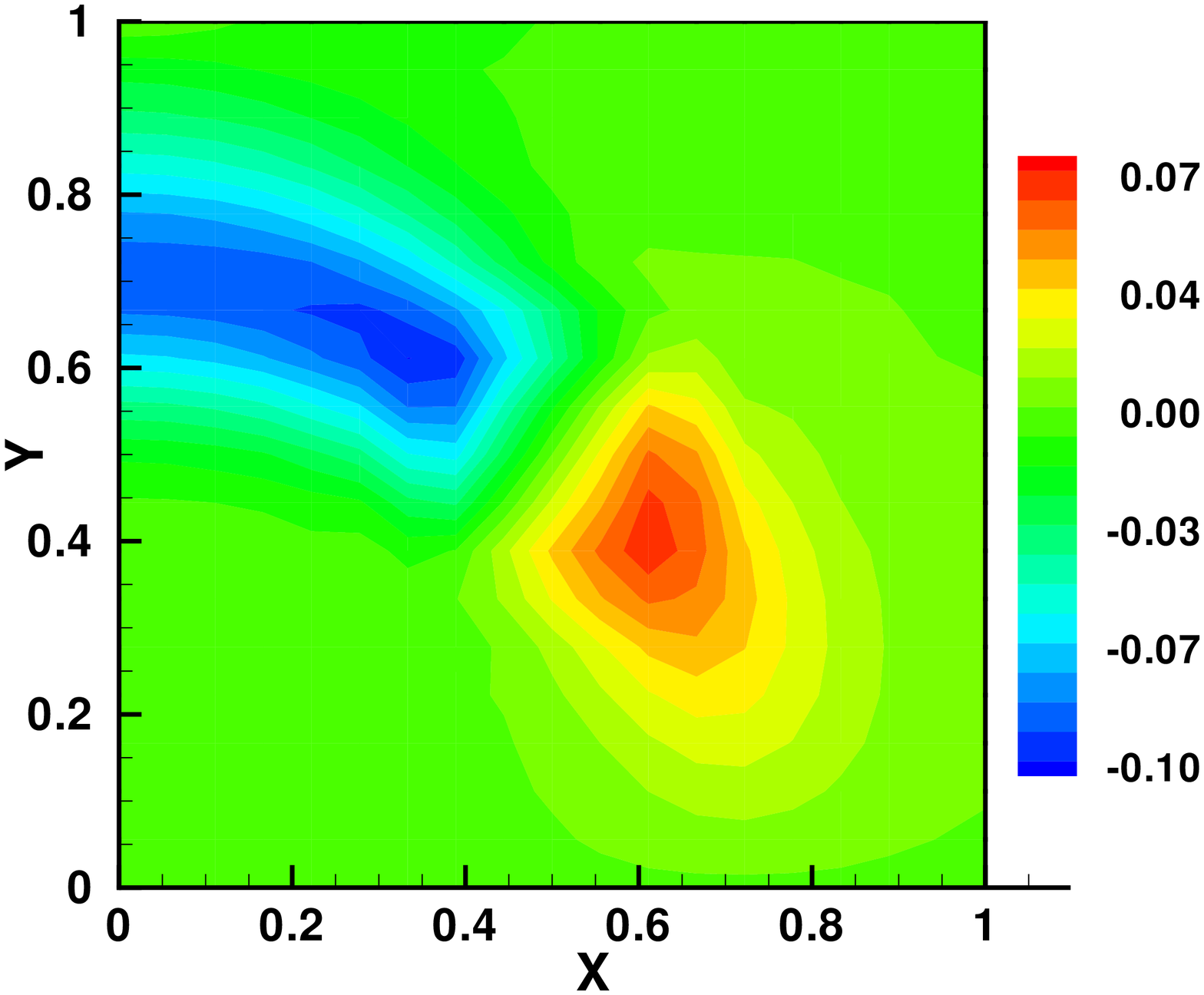}}
 \subfigure{
    \includegraphics[scale=0.32]{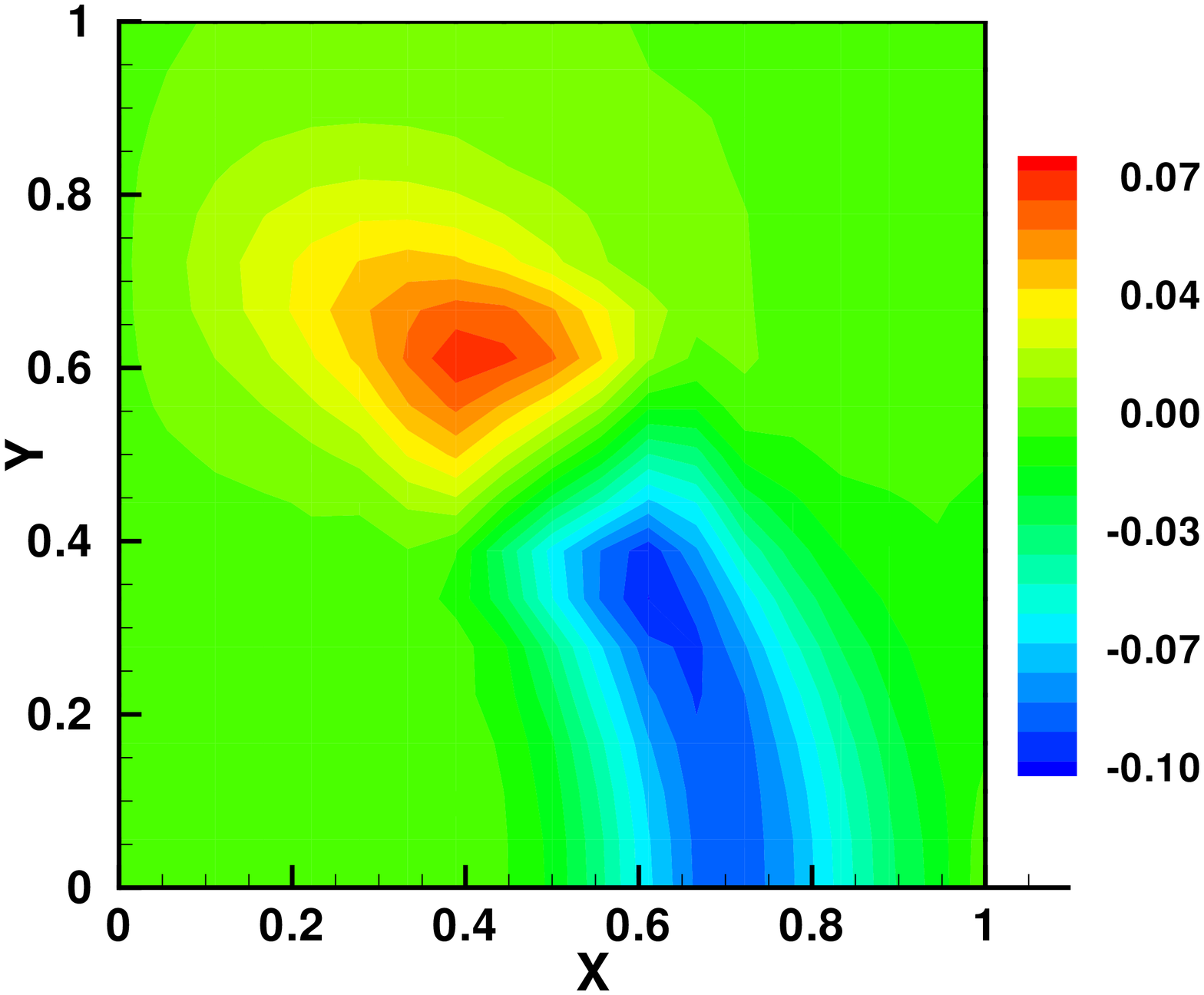}}
  \subfigure{
    \includegraphics[scale=0.32]{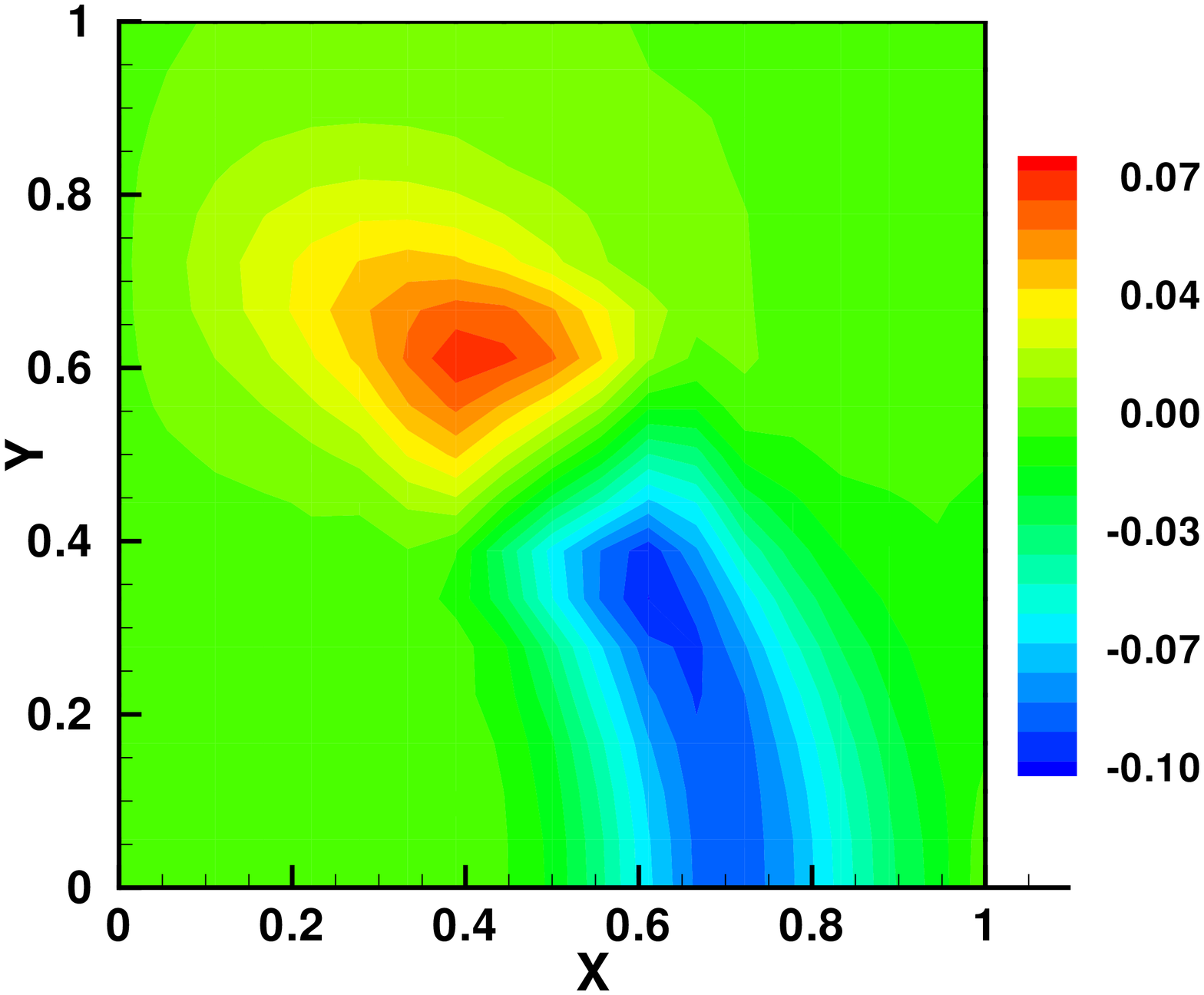}}
  \caption{Test problem \#1: Contours of the components of the vector field $\mathbf{v}$ 
    obtained using the variational multiscale (left) and corresponding optimization-based 
    (right) formulations. The top figures show the $x$-component (that is, $v_x$), and the 
    bottom figures the $y$-component of $\mathbf{v}$. Four-node quadrilateral mesh is 
    used in the numerical simulation. \label{Fig:Optim_HVM_velocity_Problem_2}}
\end{figure}

\begin{figure}[htbp]
  \subfigure{
    \includegraphics[scale=0.32]{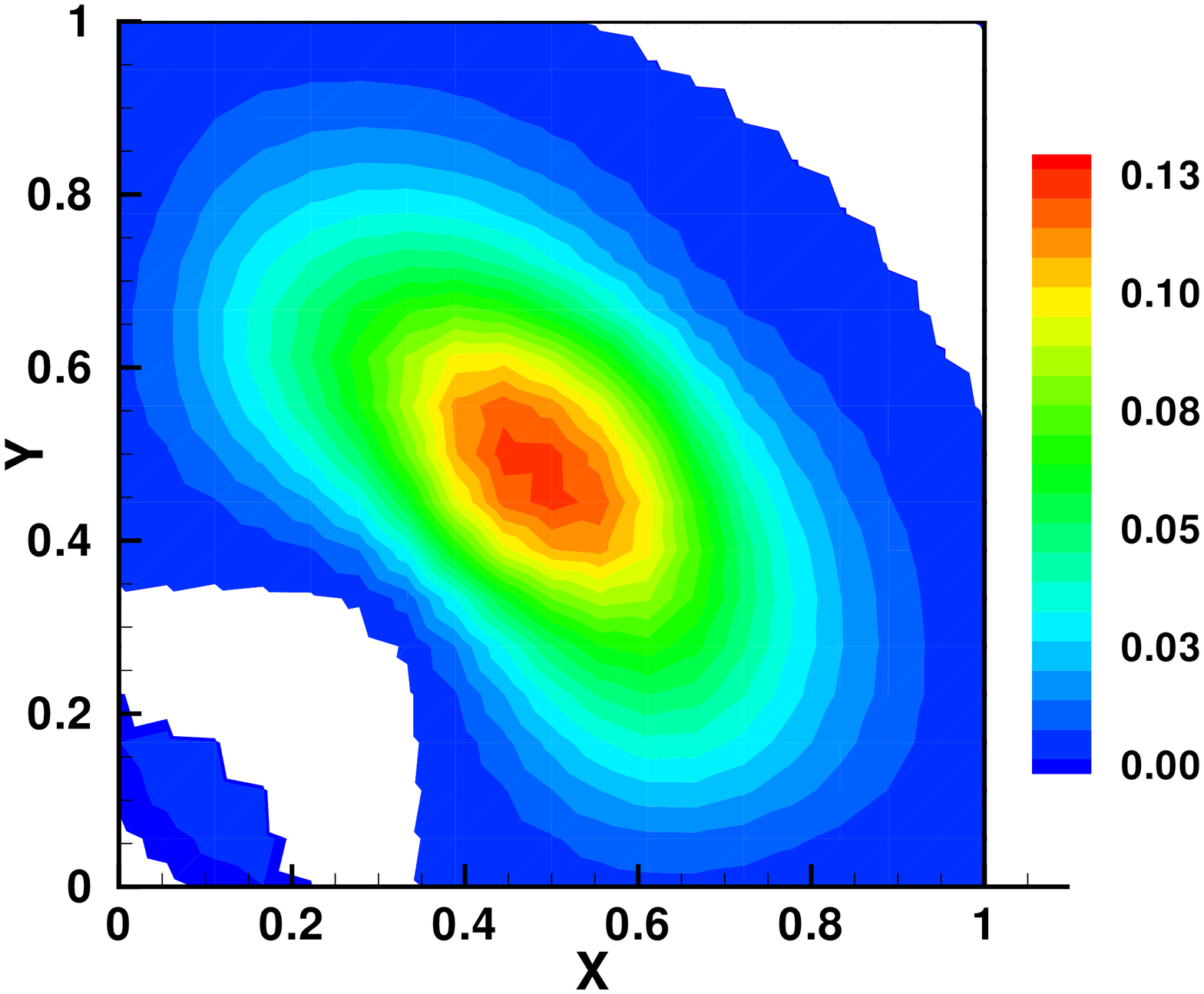}}
  \subfigure{
    \includegraphics[scale=0.32]{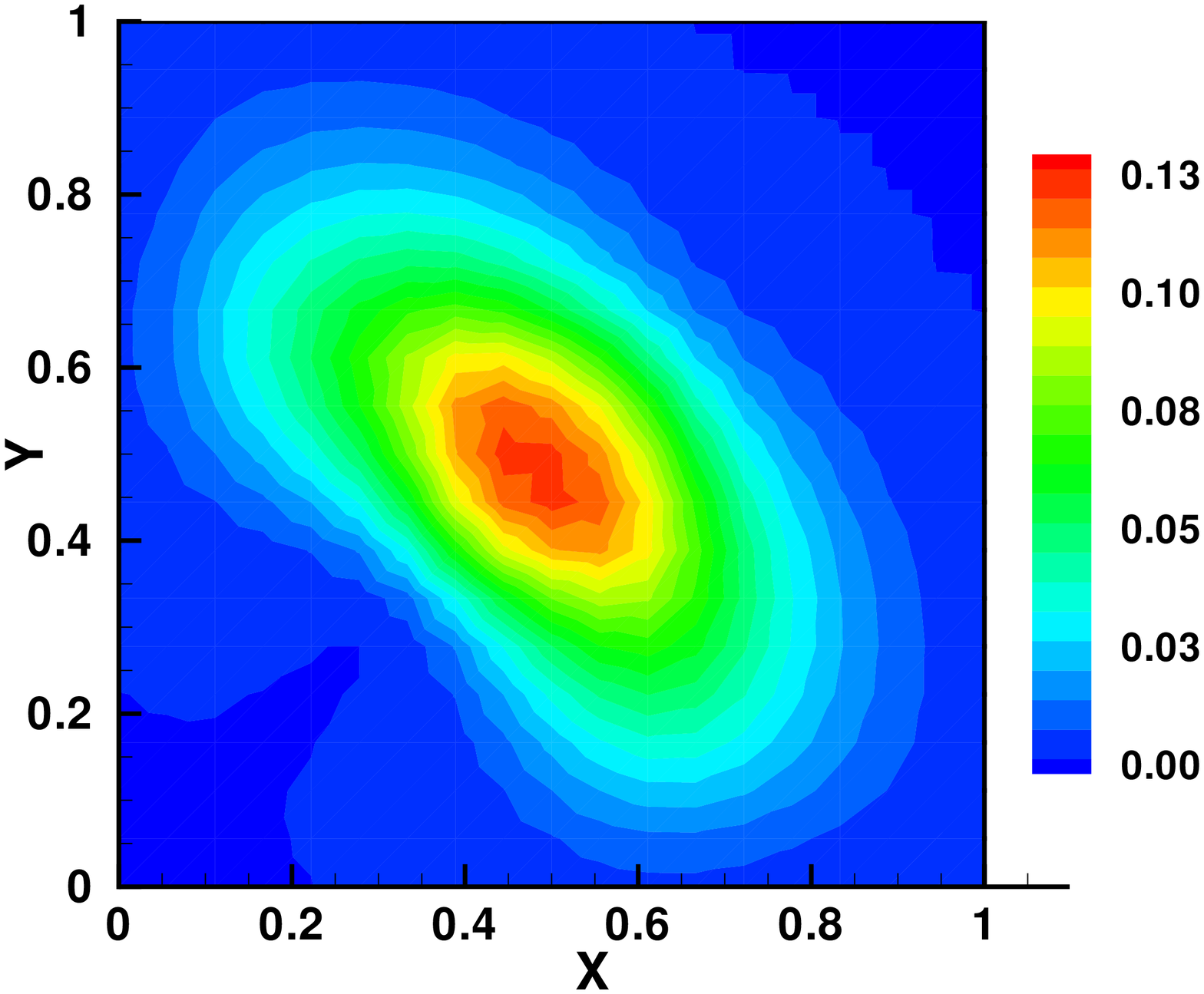}}
 \subfigure{
    \includegraphics[scale=0.32]{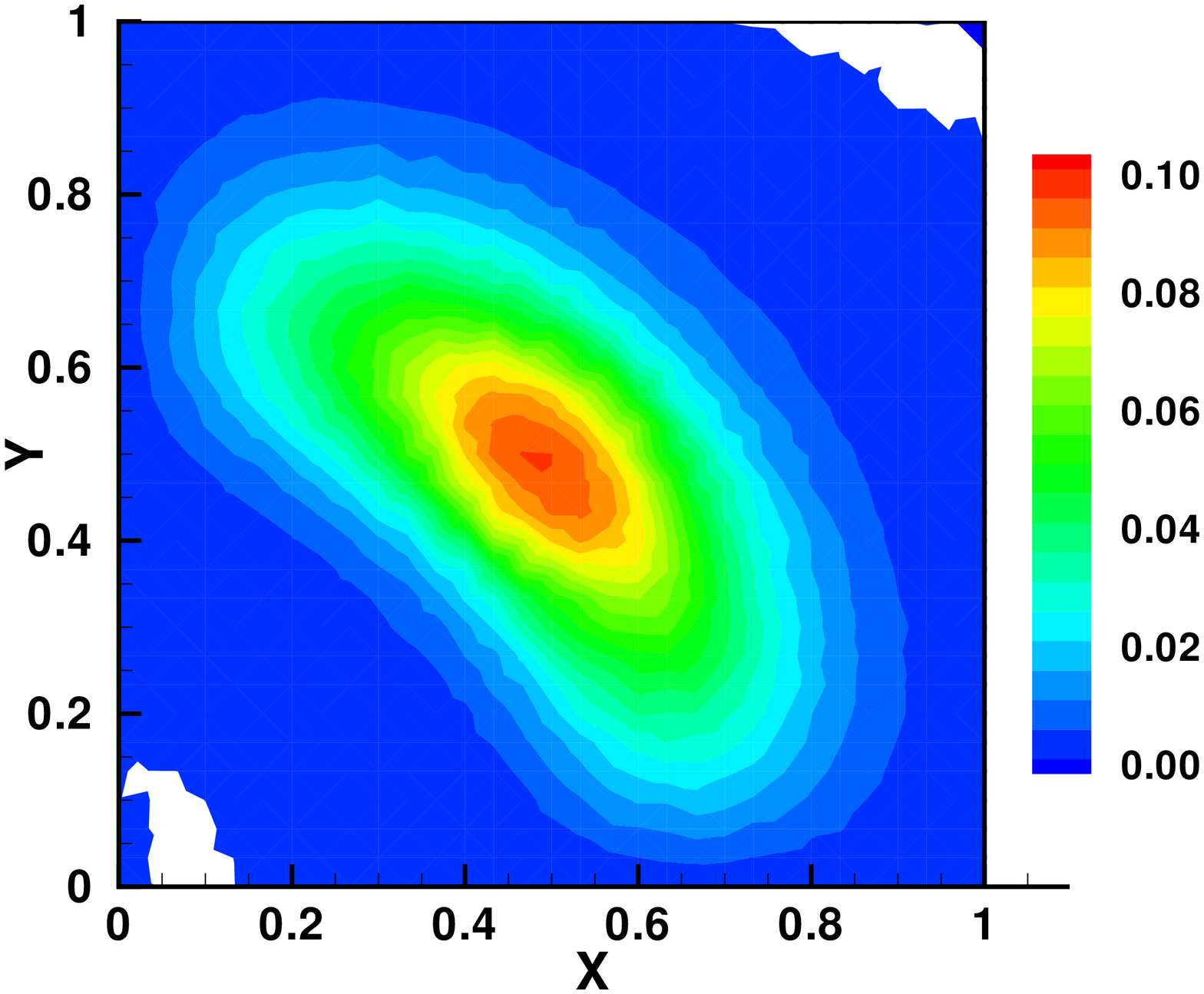}}
  \subfigure{
    \includegraphics[scale=0.32]{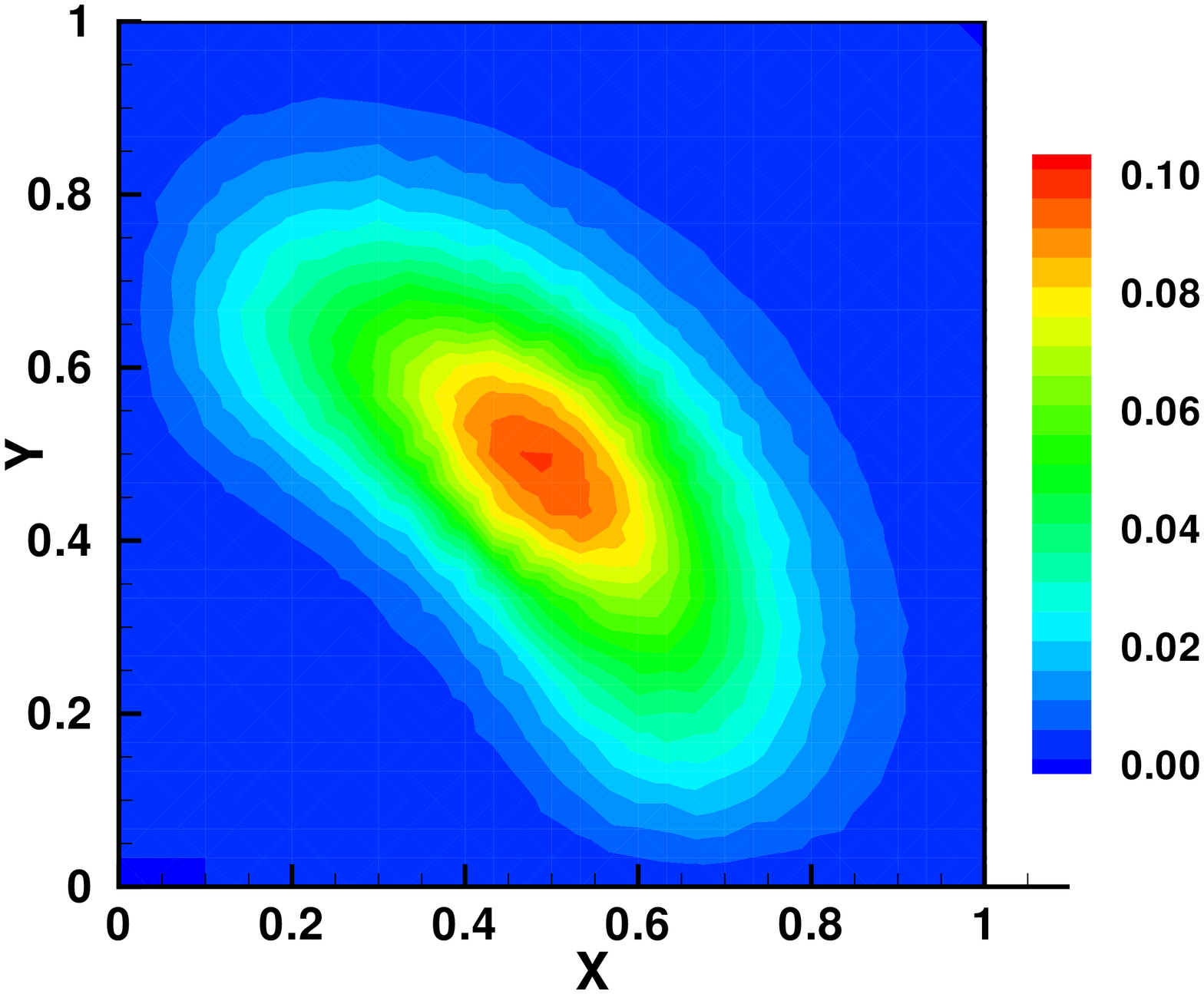}}
  \caption{Test problem \#1: Performance of the RT0 triangular (left) and corresponding optimization-based 
    (right) formulations. In the numerical simulations +45-degree (top) and  Delaunay (bottom)  meshes are 
    employed. The regions that have negative concentration are indicated in white color. \label{Fig:Optim_RT0_Problem_2}}
\end{figure}


\begin{figure}[htbp]
 \subfigure{
    \includegraphics[scale=0.32]{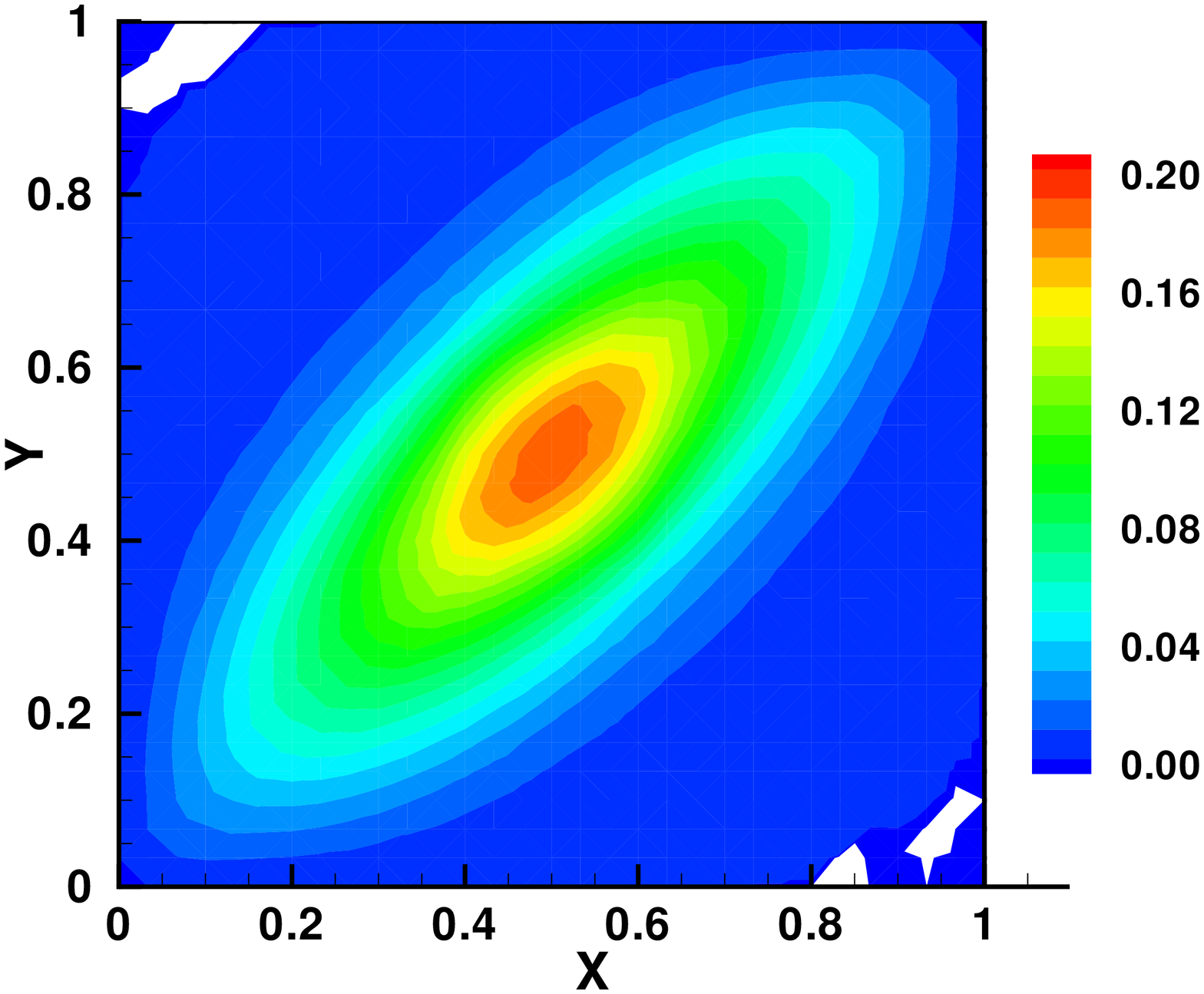}}
  \subfigure{
    \includegraphics[scale=0.32]{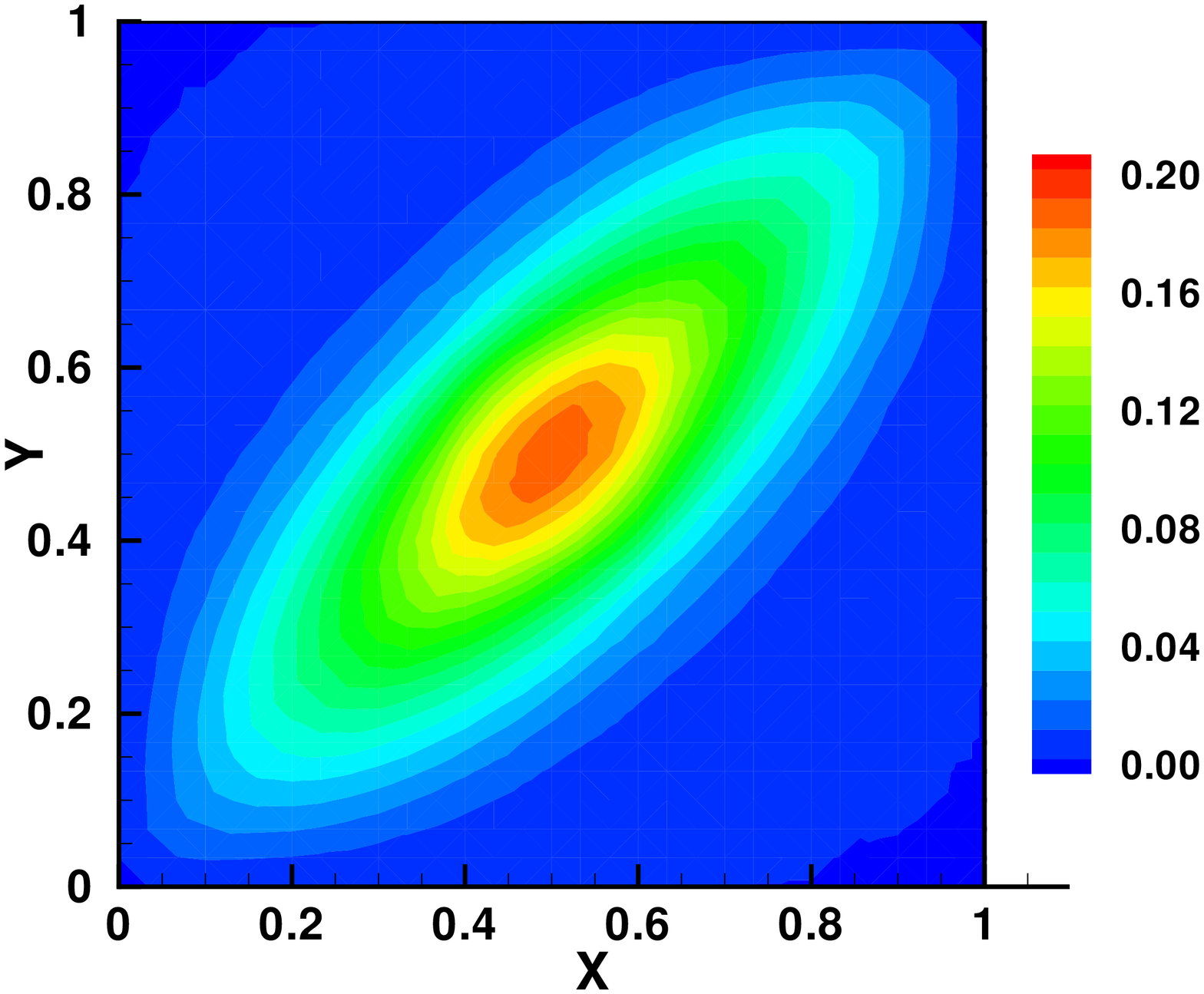}}
  \subfigure{
    \includegraphics[scale=0.32]{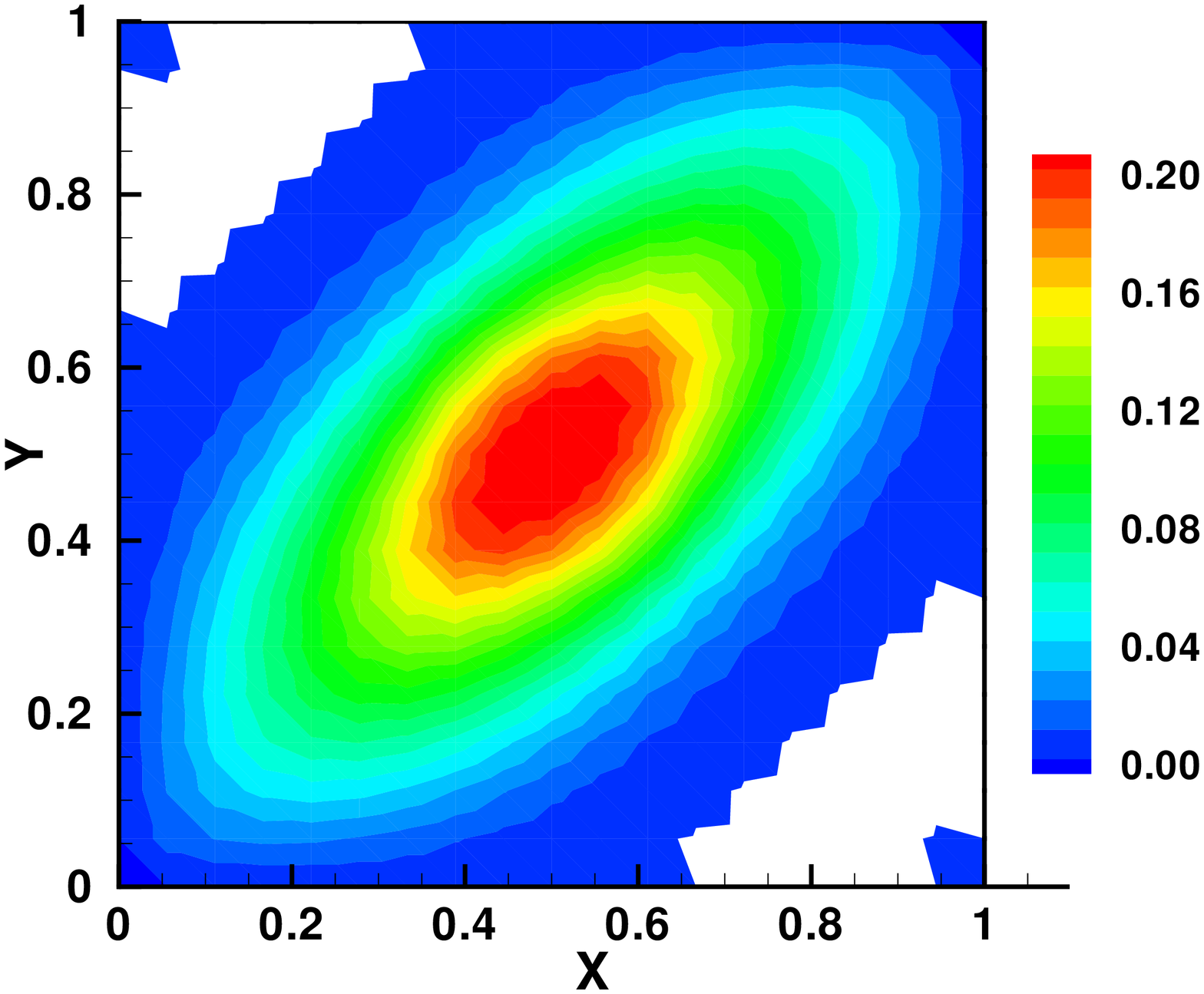}}
  \subfigure{
    \includegraphics[scale=0.32]{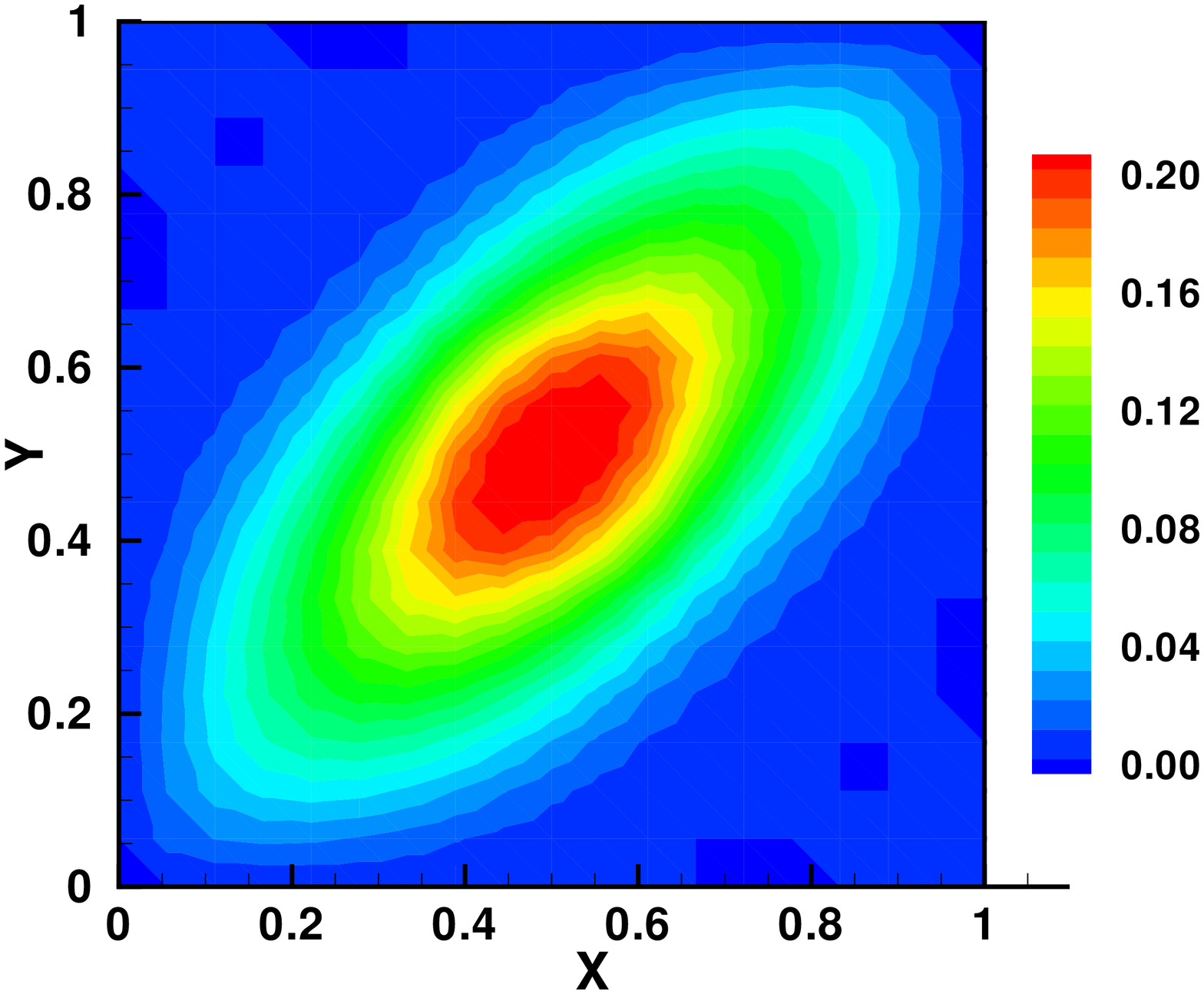}}
  \subfigure{
    \includegraphics[scale=0.32]{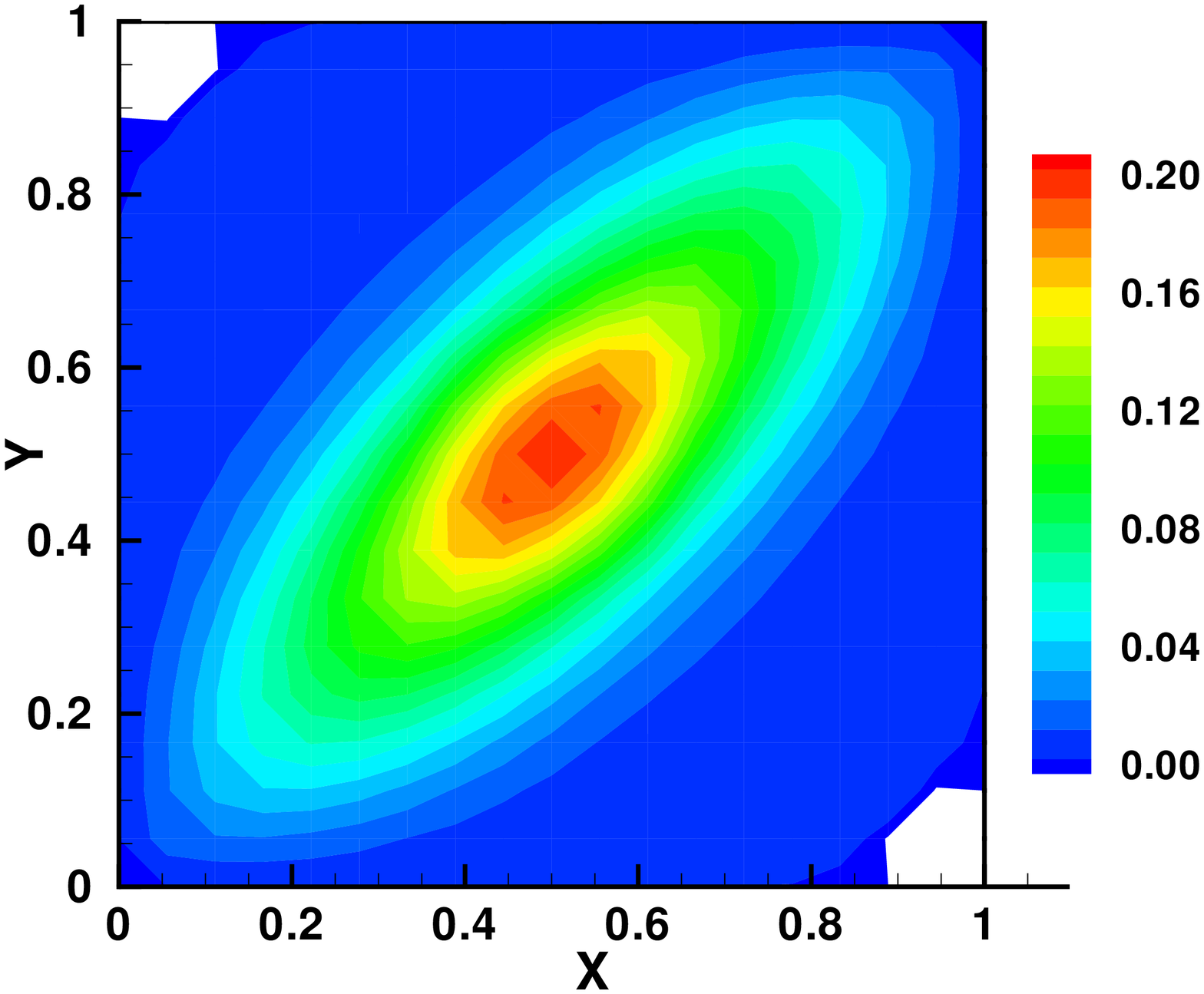}}
  \subfigure{
    \includegraphics[scale=0.32]{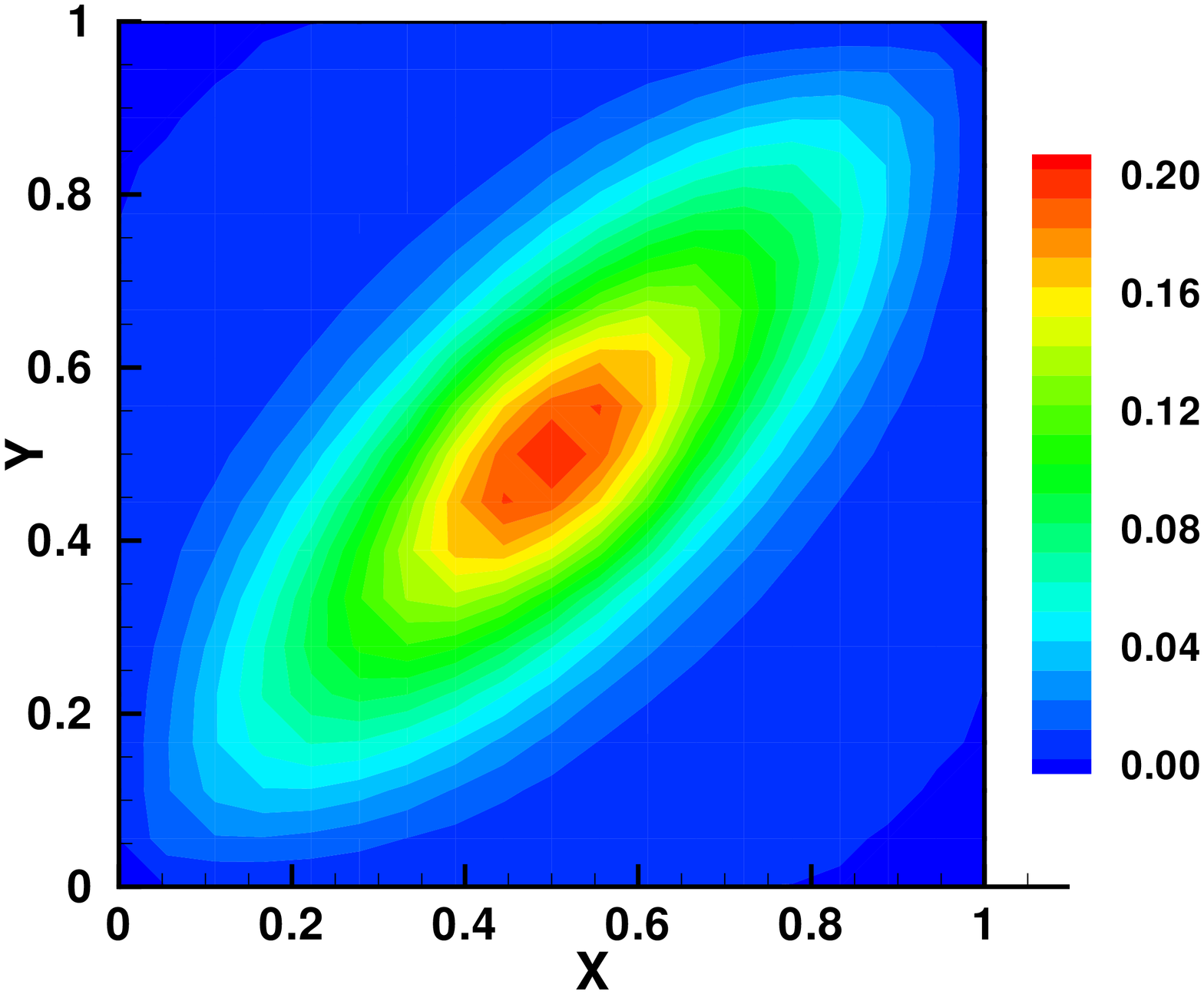}}
  \caption{Test problem \#2: Performance of the variational multiscale (left) and corresponding 
    optimization-based (right) formulations. In the numerical simulations Delaunay (top), $-45$-degree 
    (middle) and four-node quadrilateral (bottom) meshes are employed. The regions that have negative 
    concentration are marked as white. \label{Fig:Optim_HVM_Problem_3}}
\end{figure}

\begin{figure}[htbp]
  \subfigure{
    \includegraphics[scale=0.32]{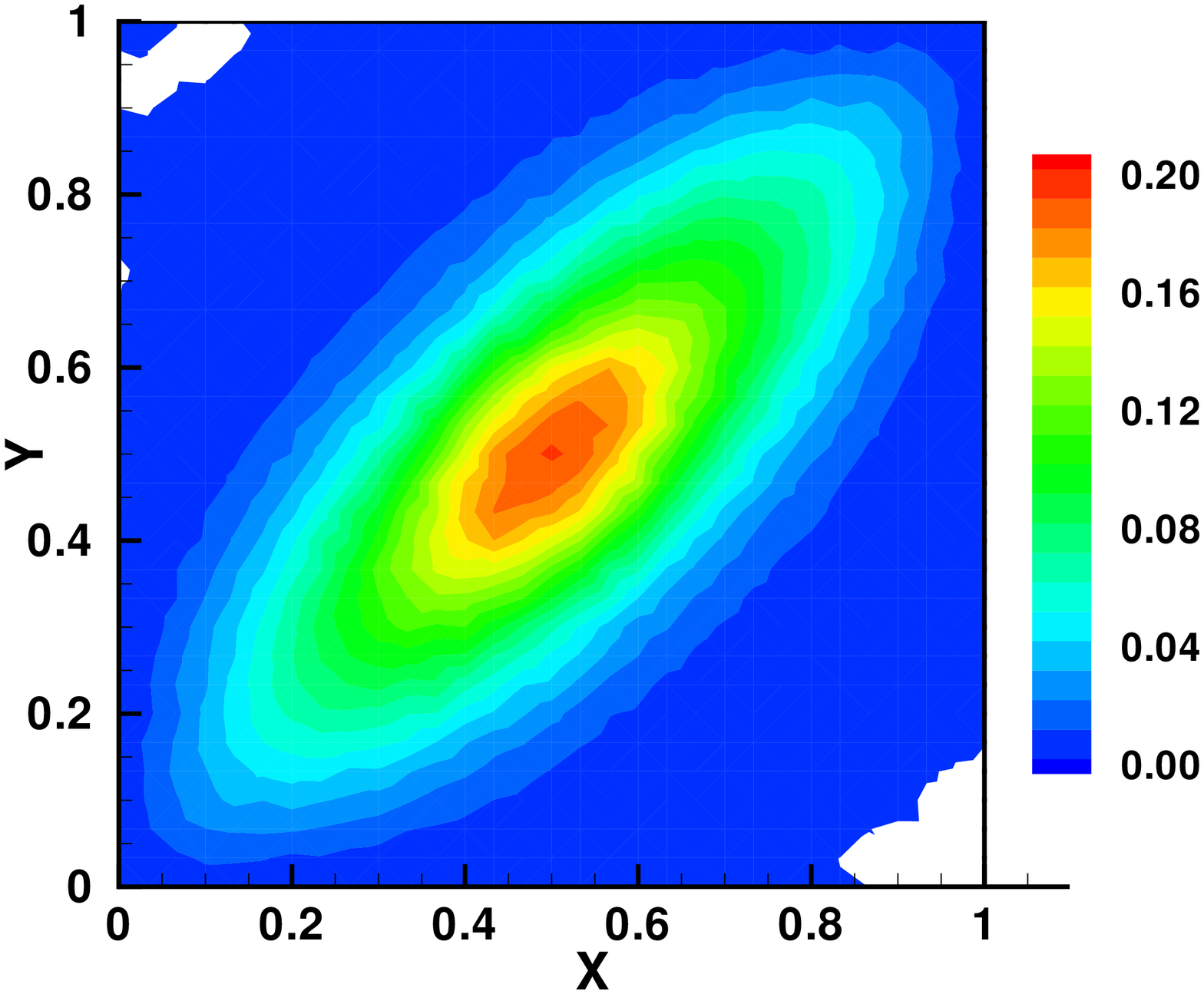}}
  \subfigure{
    \includegraphics[scale=0.32]{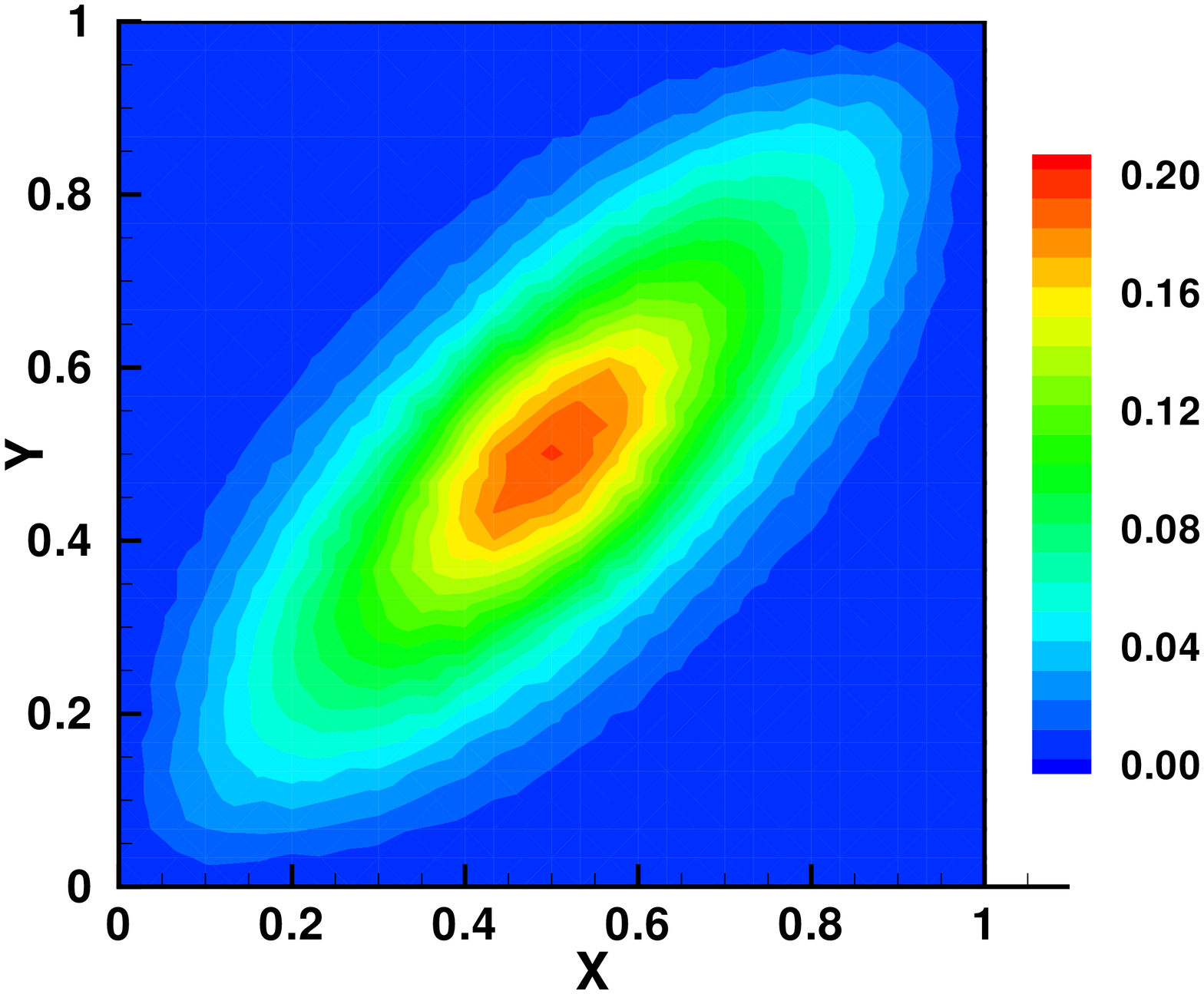}}
  \subfigure{
    \includegraphics[scale=0.32]{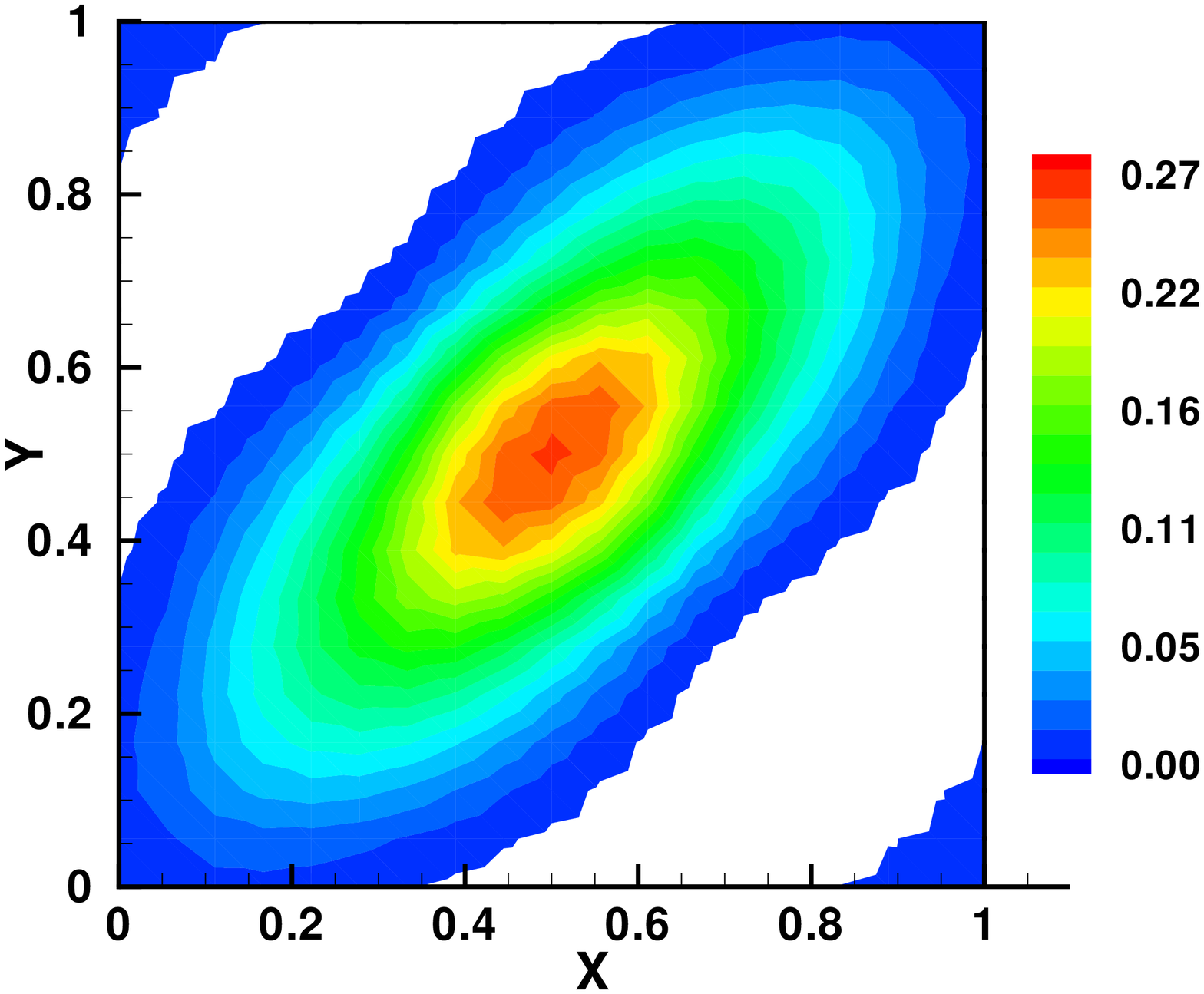}}
  \subfigure{
    \includegraphics[scale=0.32]{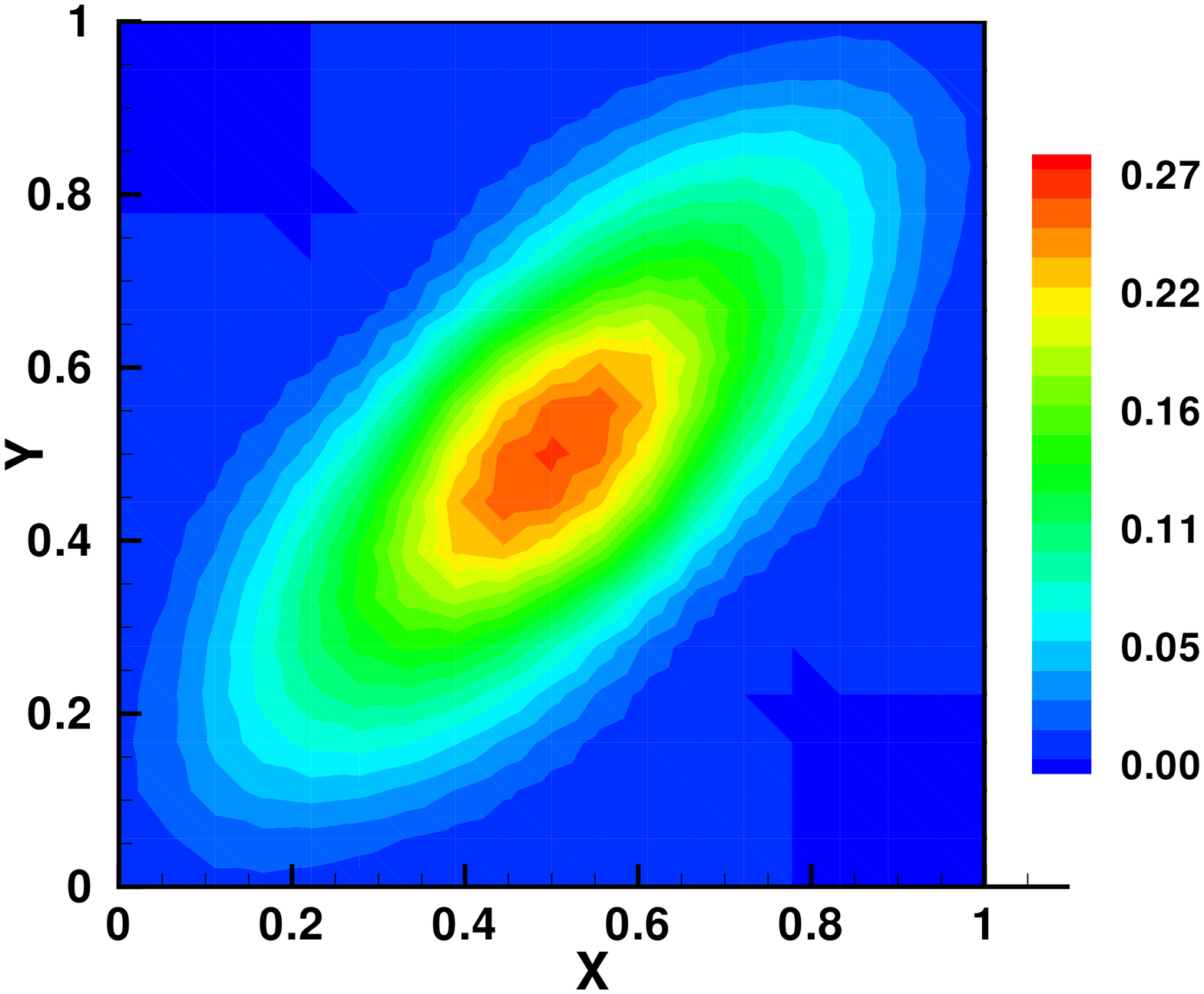}}
  \caption{Test problem \#2: Performance of the RT0 triangular (left) and corresponding optimization-based (right) 
    formulations. In the numerical simulations Delaunay (top) and $-45$-degree (bottom) meshes are employed. The regions 
    that have negative concentration are indicated in white color. \label{Fig:Optim_RT0_Problem_3}}
\end{figure}


\begin{figure}
  \subfigure{
    \includegraphics[scale=0.32]{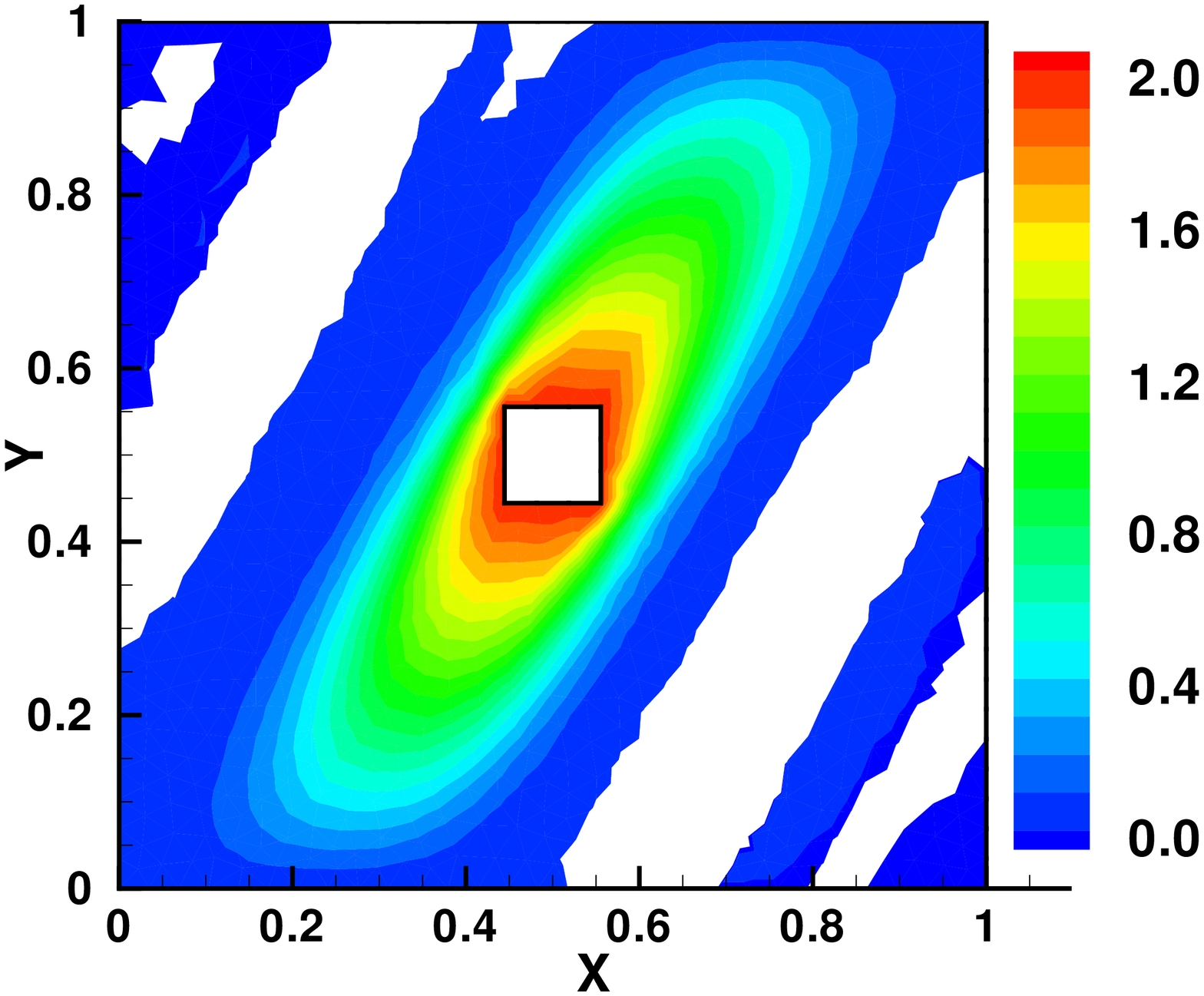}}
  \subfigure{
    \includegraphics[scale=0.32]{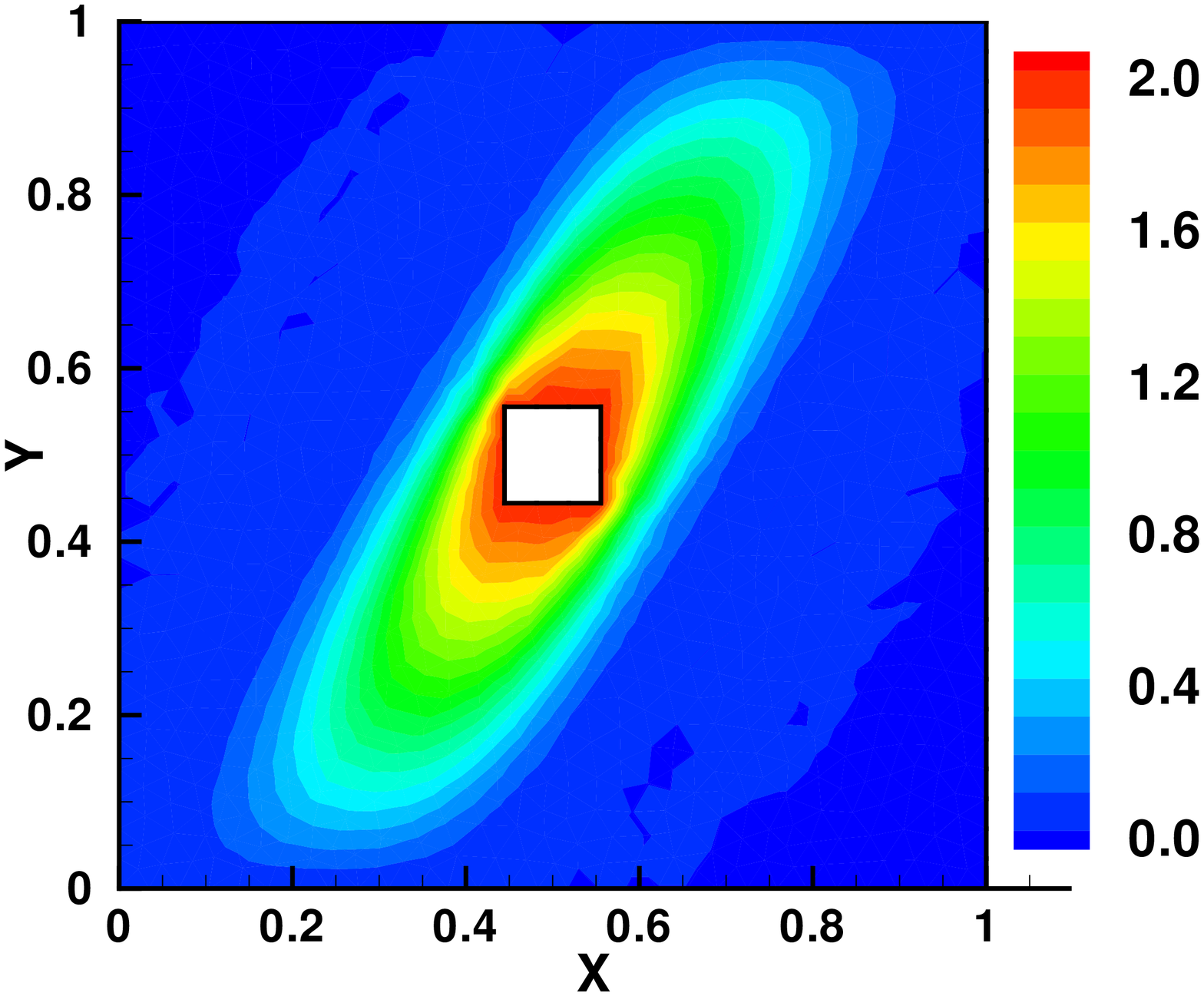}}
  \subfigure{
    \includegraphics[scale=0.32]{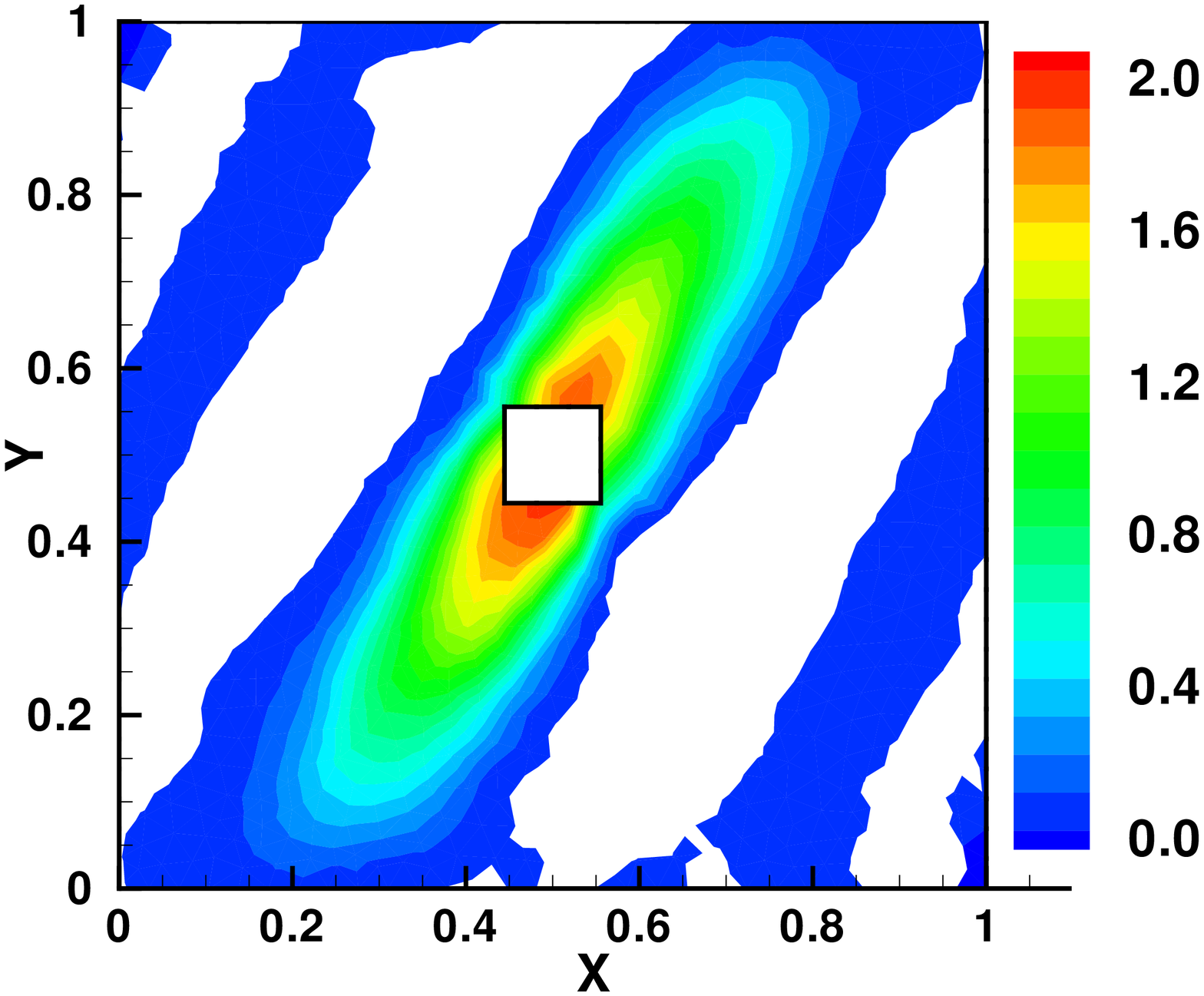}}
  \subfigure{
    \includegraphics[scale=0.32]{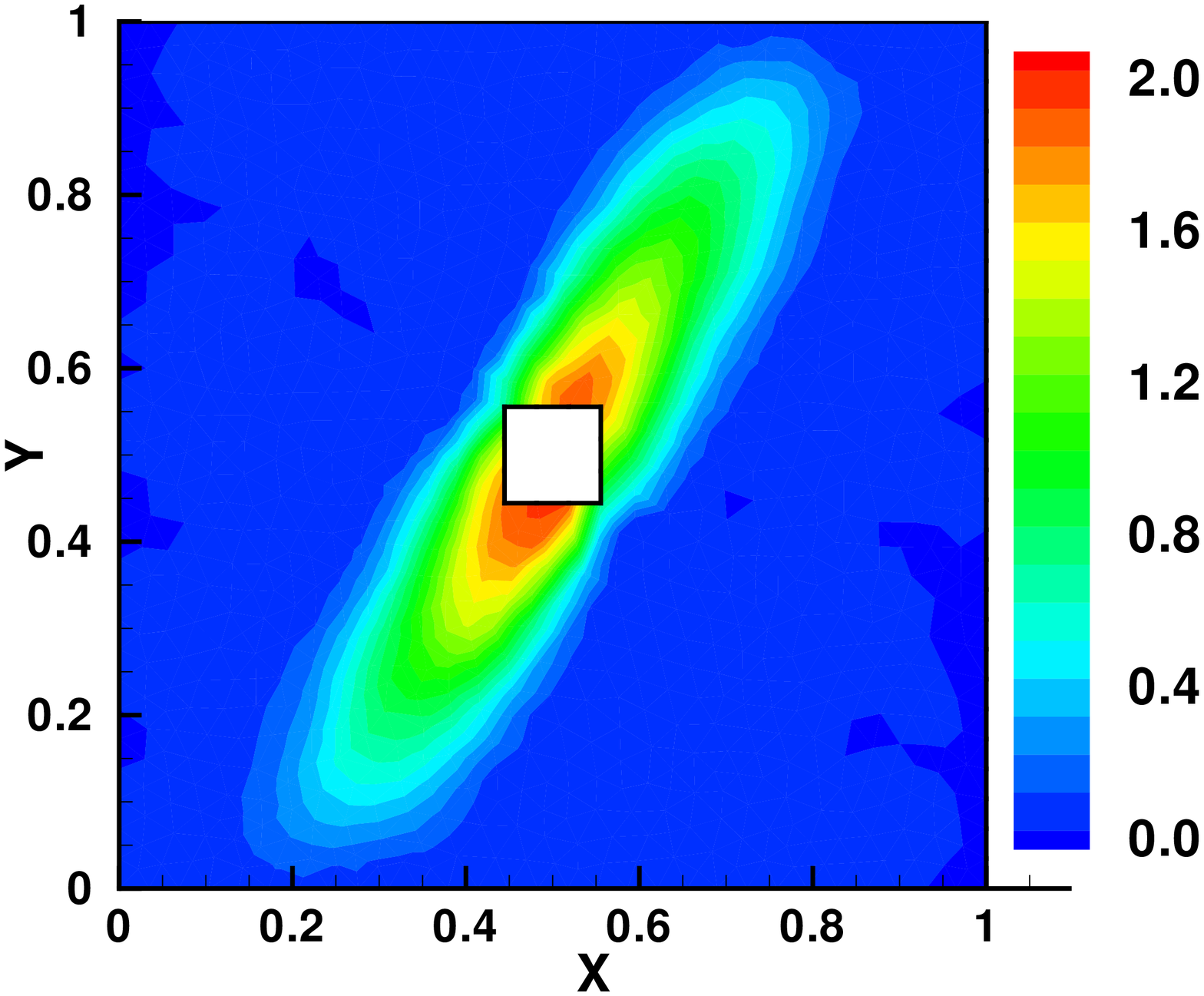}}
  \caption{Test problem \#3: Performance of the variational multiscale (top) and RT0 (bottom) formulations. 
    The figures on the right show the performance of corresponding optimization-based non-negative 
    formulations. As expected, the variational multiscale and RT0 formulations produced negative solutions 
    in significant portions of the domain, which are denoted by the white color. On the other hand, the 
    proposed optimization-based formulations produced desirable non-negative solutions. The computational 
    mesh that is used in these numerical simulations is shown in Figure \ref{Fig:Optim_mesh_problem_4}. 
    \label{Fig:Optim_Results_Problem_4}}
\end{figure}

\begin{figure}
  \subfigure{
    \includegraphics[scale=0.32]{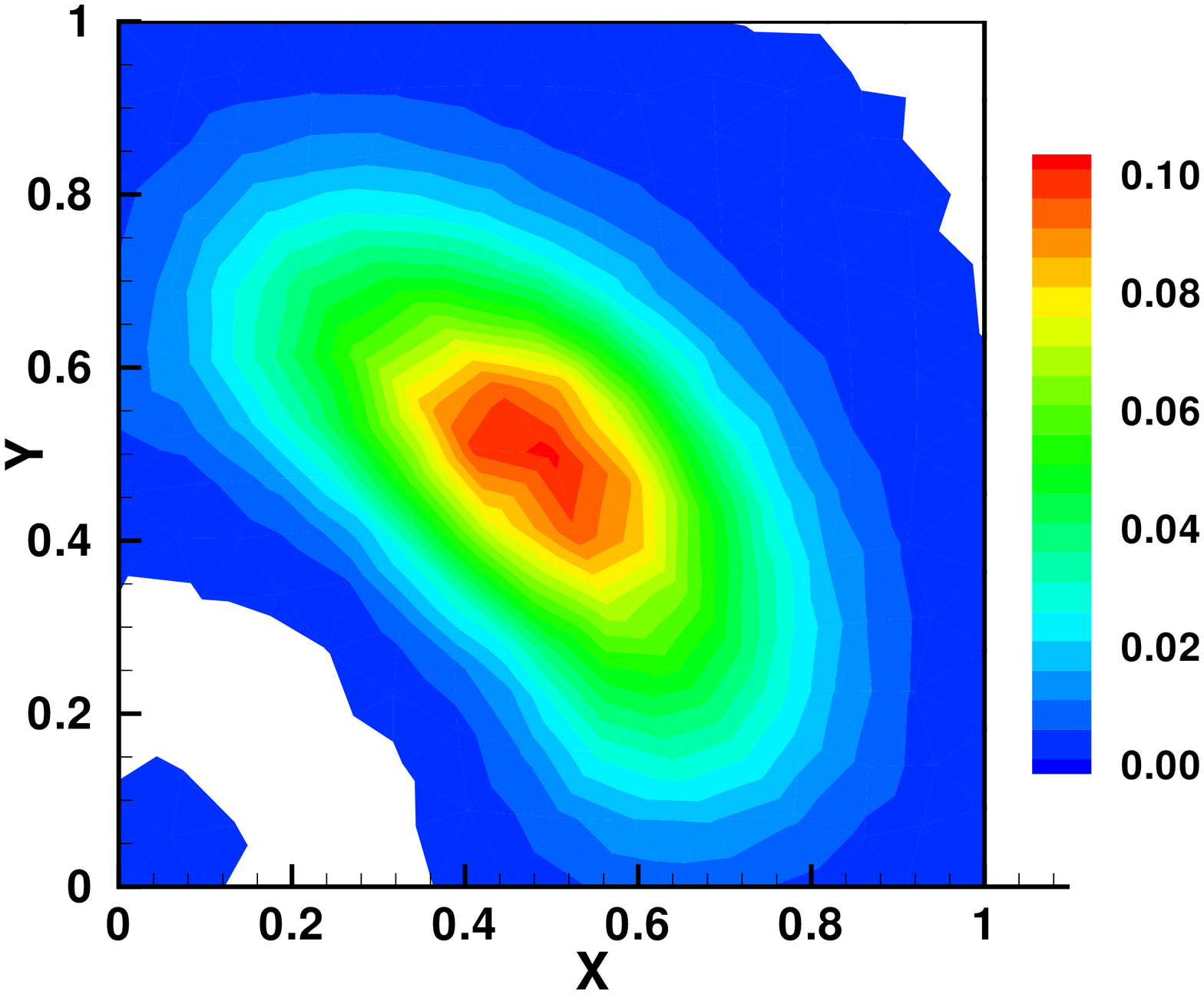}}
  \subfigure{
    \includegraphics[scale=0.32]{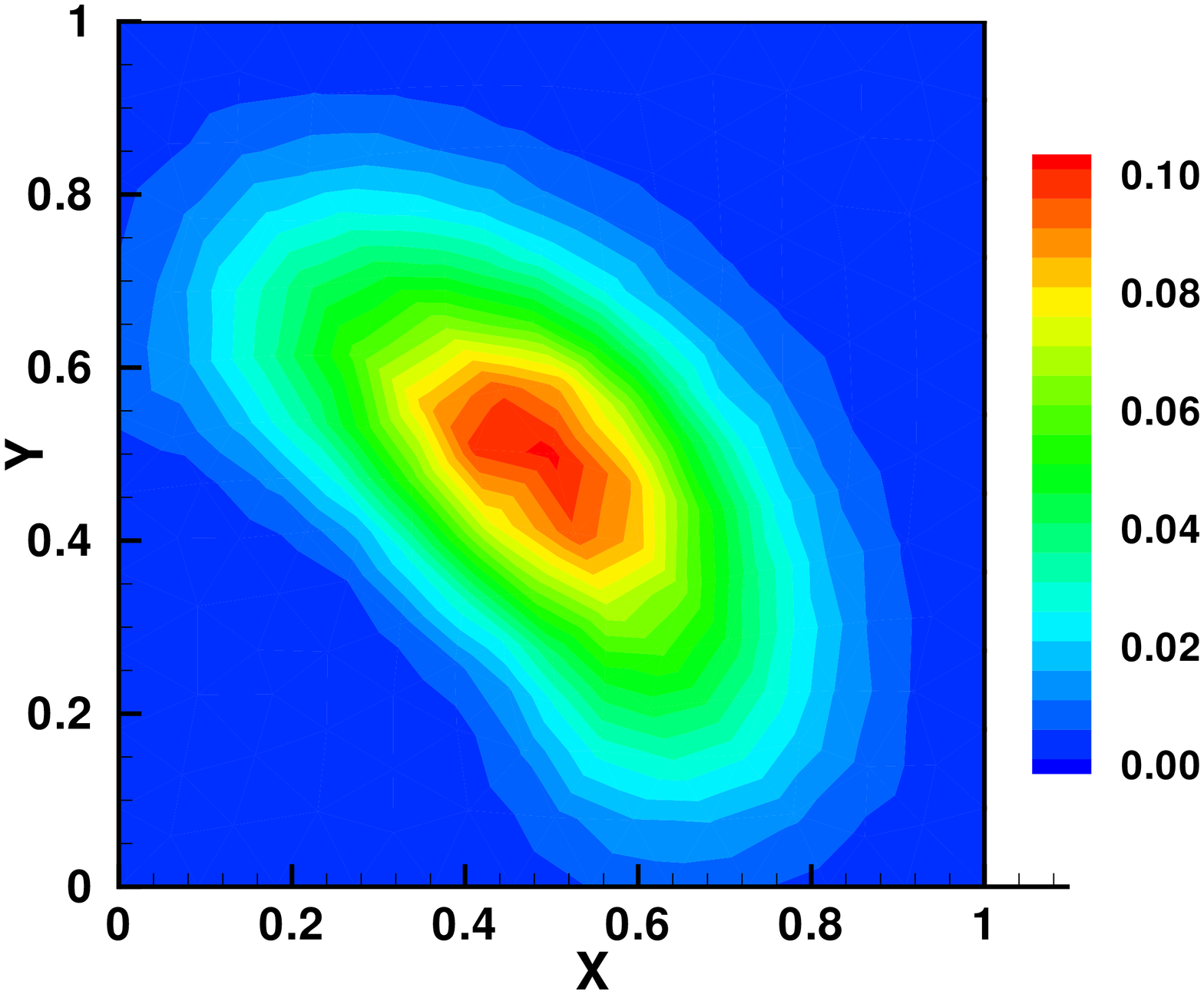}}
  \subfigure{
    \includegraphics[scale=0.32]{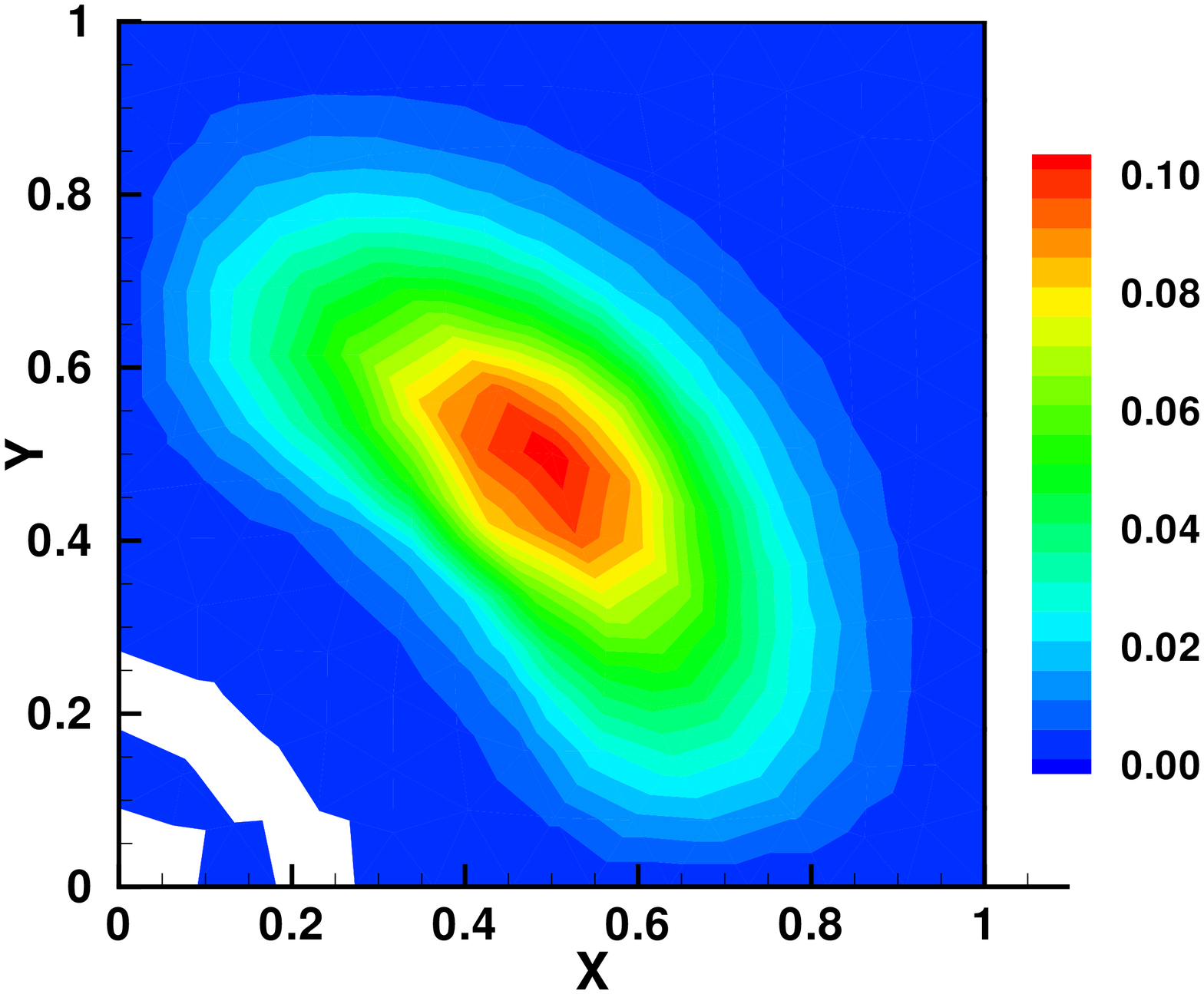}}
  \subfigure{
    \includegraphics[scale=0.32]{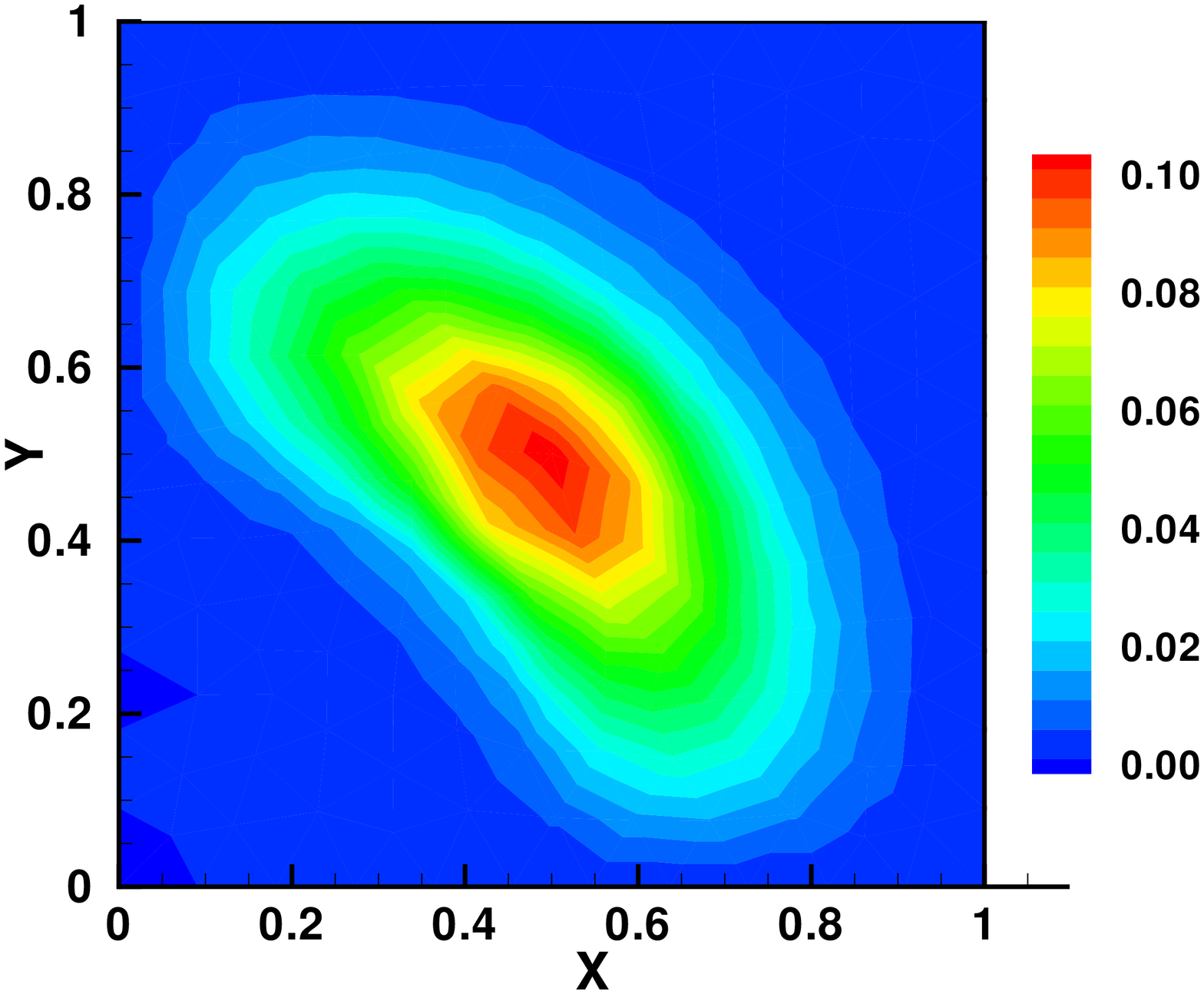}}
  \caption{Performance on a well-centered triangular (WCT) mesh: RT0 (top) and variational multiscale (bottom) 
    formulations on test problem \#1 and using the WCT mesh shown in Figure \ref{Fig:Optim_mesh_WCT}. The 
    left figures illustrates that, in the case of tensorial diffusivity coefficient, even a WCT mesh is not 
    sufficient for the RT0 and variational multiscale formulations to satisfy the discrete maximum-minimum 
    principle. The right figures shows that (as expected) the proposed non-negative optimization-based 
    formulation outlined in Sections \ref{Sec:Optim_RT} and \ref{Sec:Optim_VMS} produces non-negative 
    solutions. \label{Fig:Optim_HVM_WCTmesh_Problem_2}}
\end{figure}

\begin{figure}[htbp]
  \subfigure{
    \includegraphics[scale=0.36]{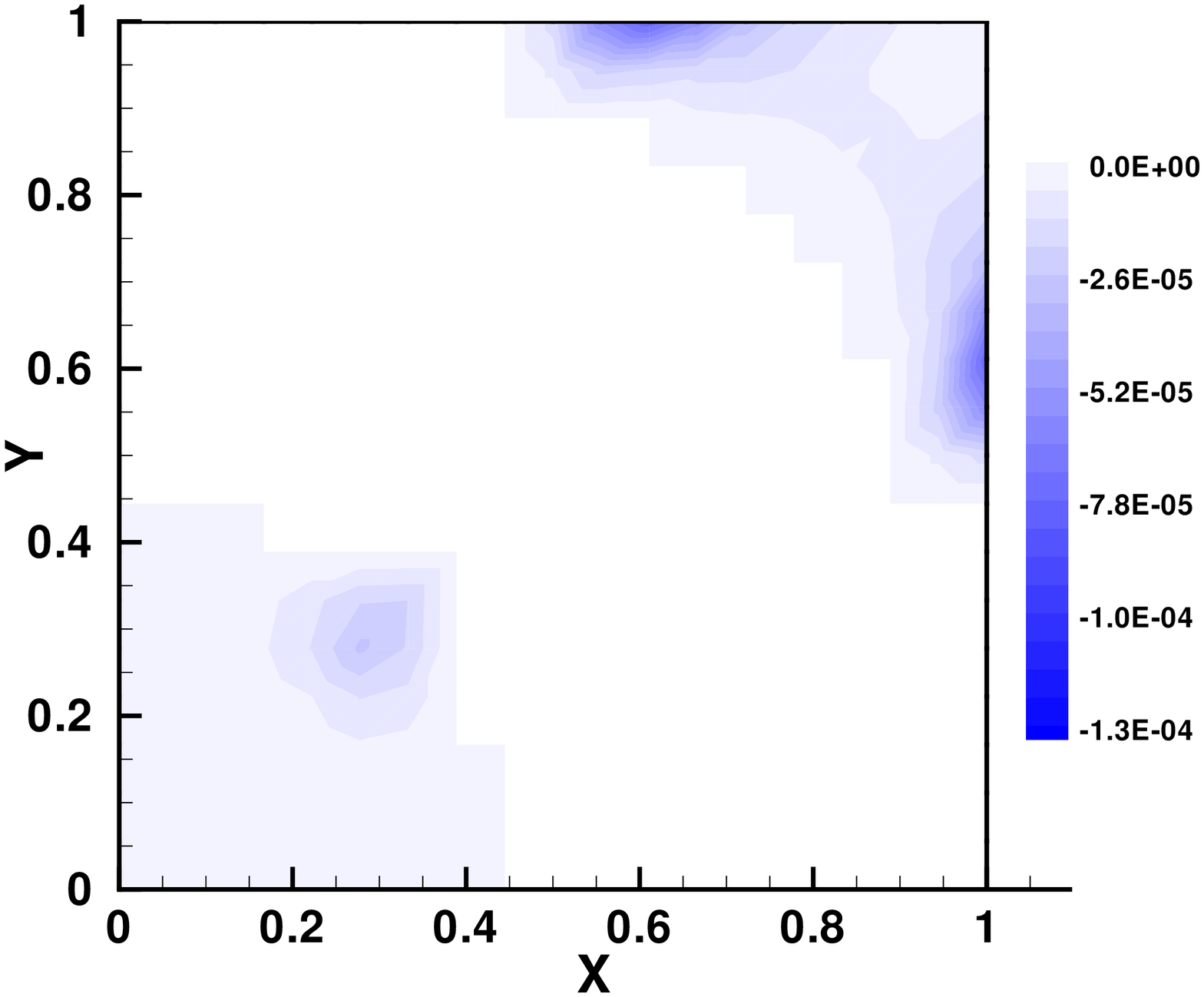}}
 \subfigure{
    \includegraphics[scale=0.36]{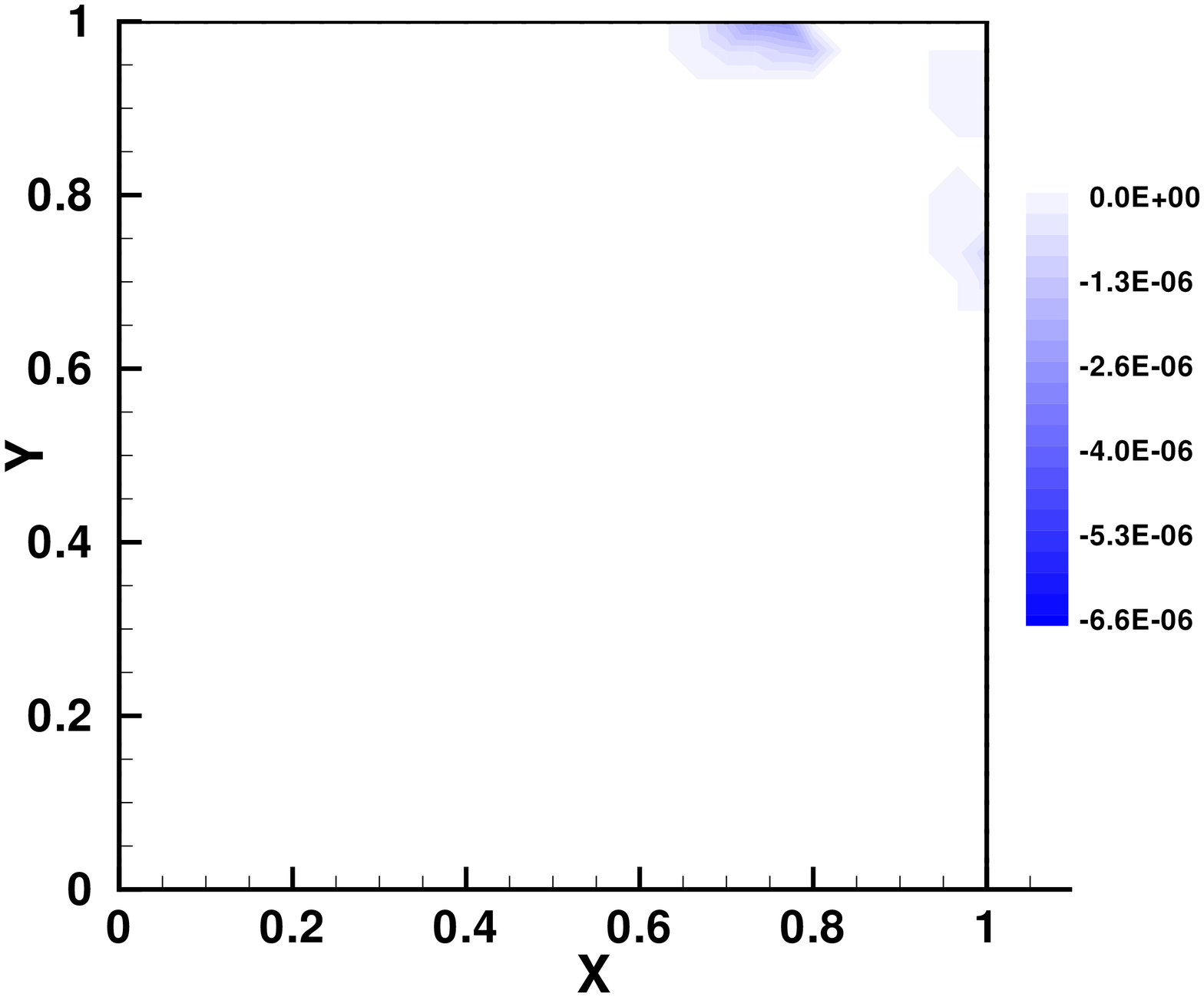}}
  \subfigure{
    \includegraphics[scale=0.36]{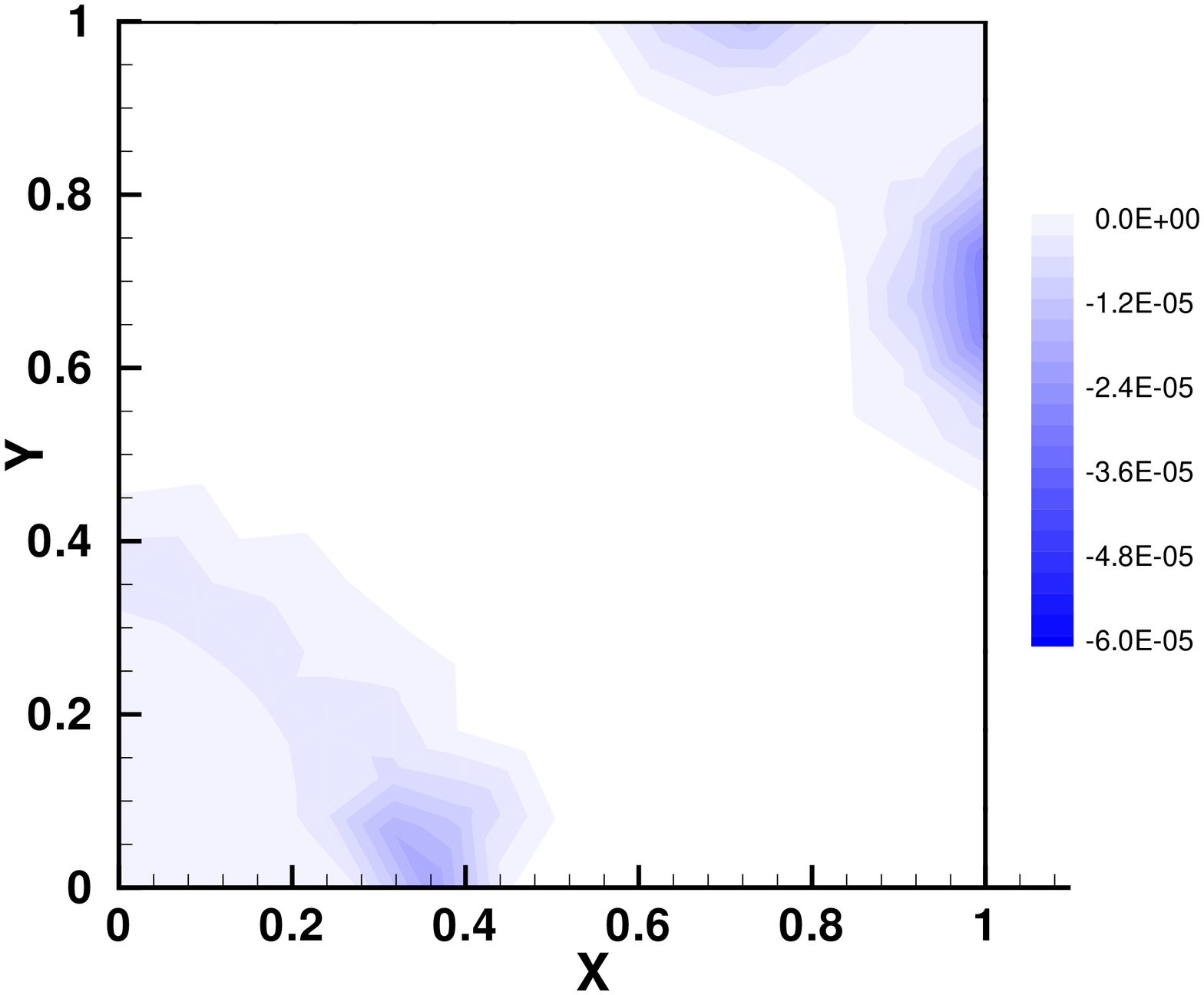}}
  \caption{Test problem \#1: Contours of mass balance error under the optimization-based RT0 formulation. In the 
    numerical simulations +45-degree (top), Delaunay (middle) and WCT (bottom) meshes are employed. 
    \label{Fig:Optim_RT0_mass_error_Problem_2}}
\end{figure}

\newpage
\begin{figure}[htbp]
  \subfigure{
    \includegraphics[scale=0.36]{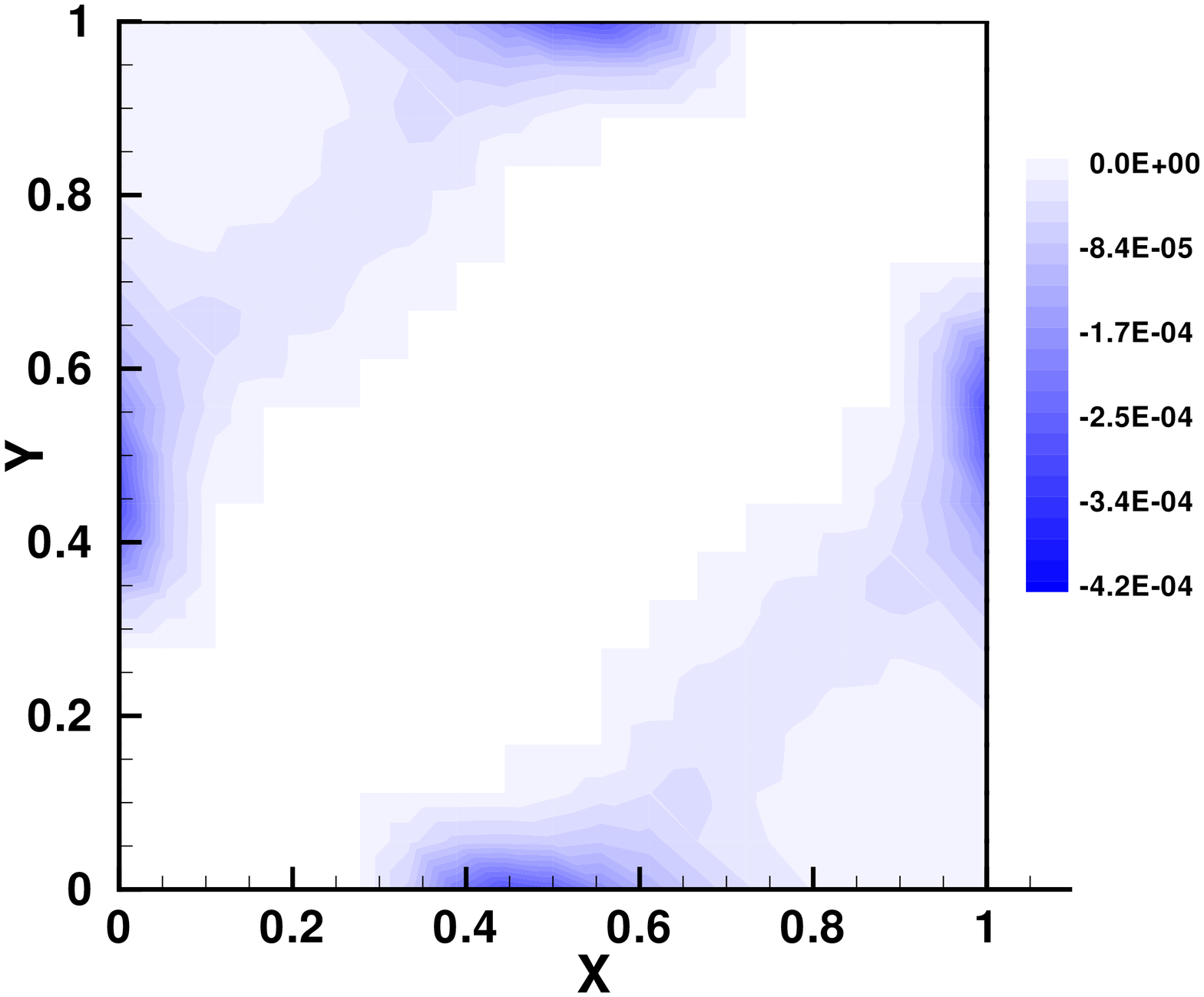}}
 \subfigure{
    \includegraphics[scale=0.36]{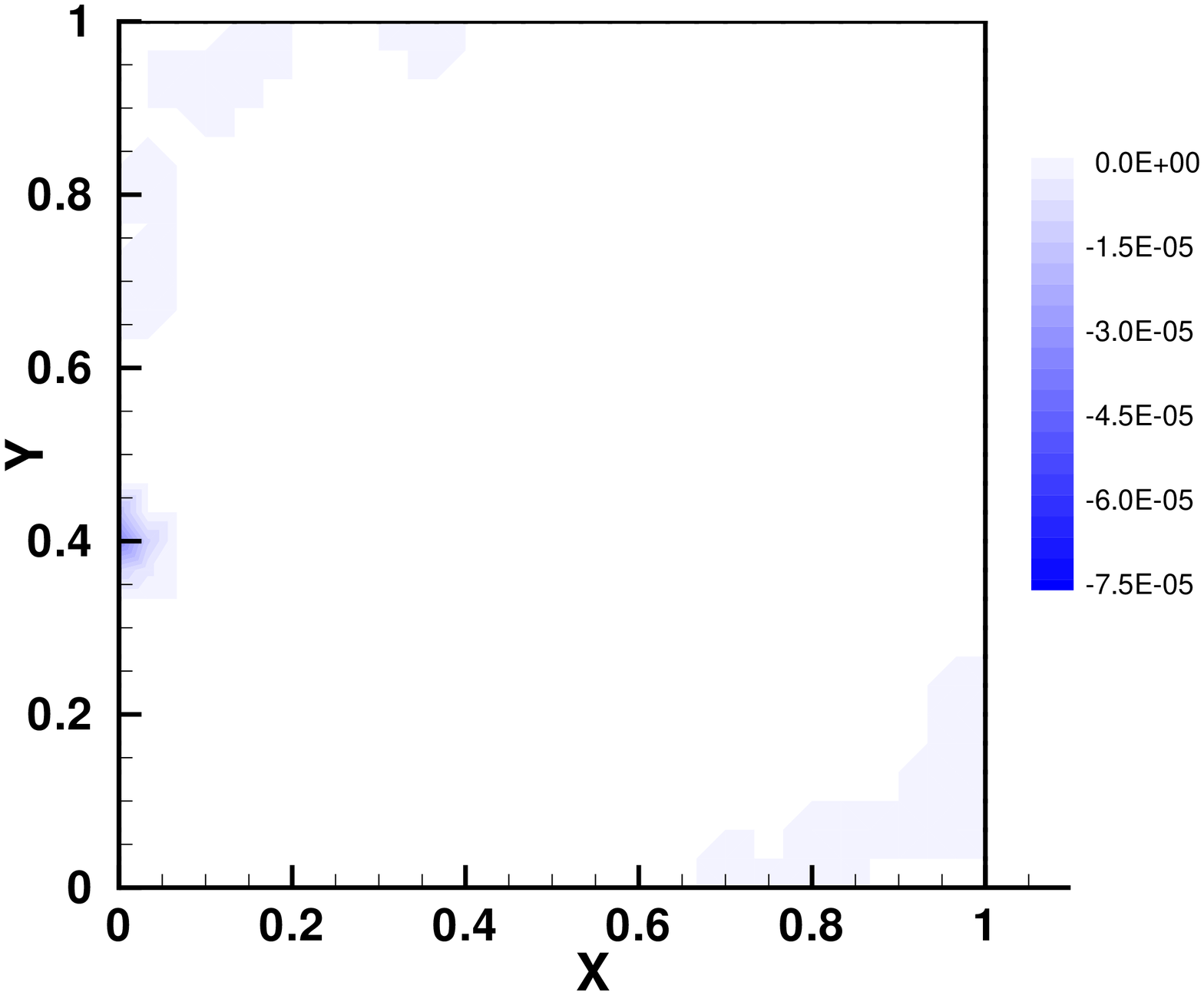}}
  \subfigure{
    \includegraphics[scale=0.36]{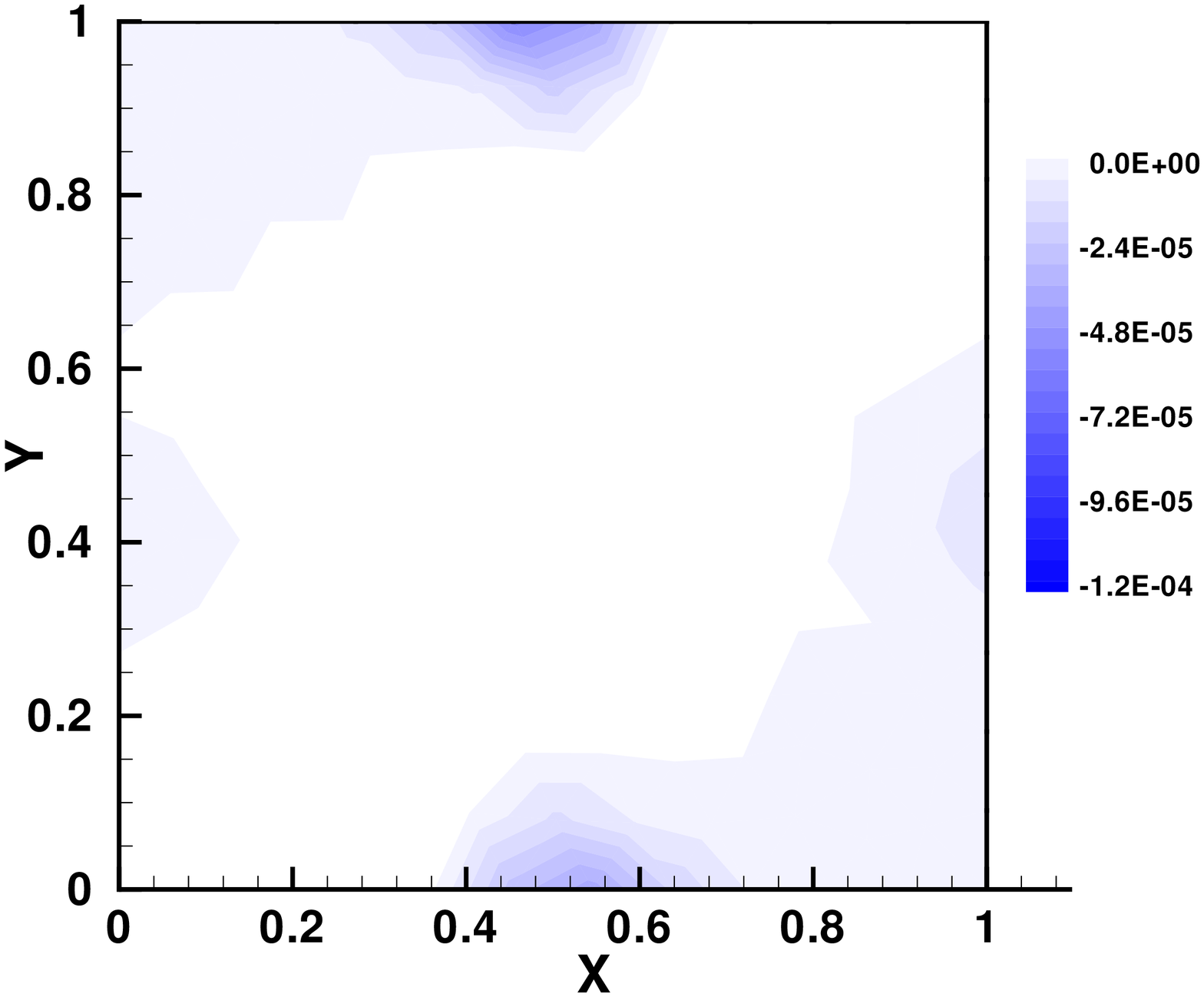}}
  \caption{Test problem \#2: Contours of mass balance error under the optimization-based RT0 formulation. In the 
    numerical simulations -45-degree (top), Delaunay (middle) and WCT (bottom) meshes are employed. 
    \label{Fig:Optim_RT0_mass_error_Problem_3}}
\end{figure}

\newpage
\begin{figure}[htbp]
  \includegraphics[scale=0.36]{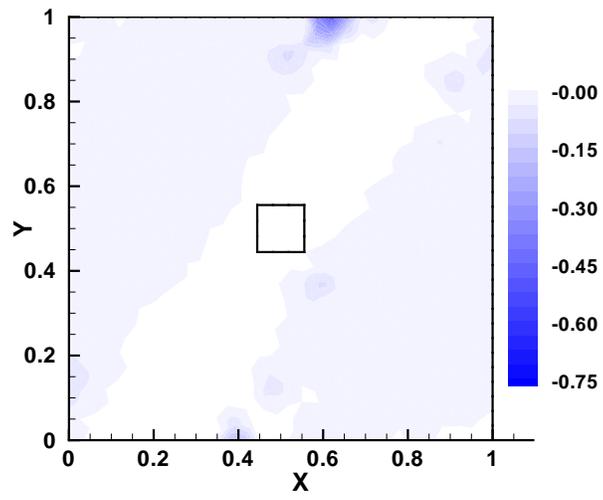}
  \caption{Test problem \#3: Contours of mass balance error under the optimization-based RT0 formulation. 
    \label{Fig:Optim_RT0_mass_error_Problem_4}}
\end{figure}

\newpage
\begin{figure}
\centering
\vspace{-0.5in}
\subfigure[]{\includegraphics[scale=0.4]{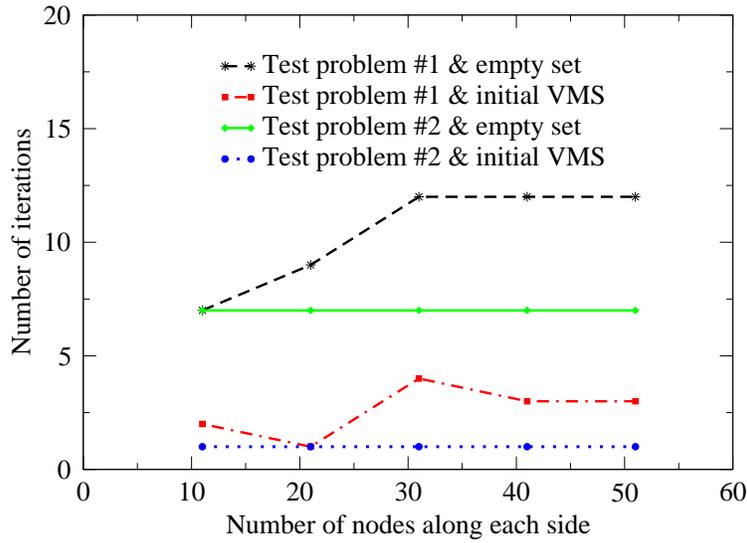}}
\vspace{0.5in}
\subfigure{\includegraphics[scale=0.4]{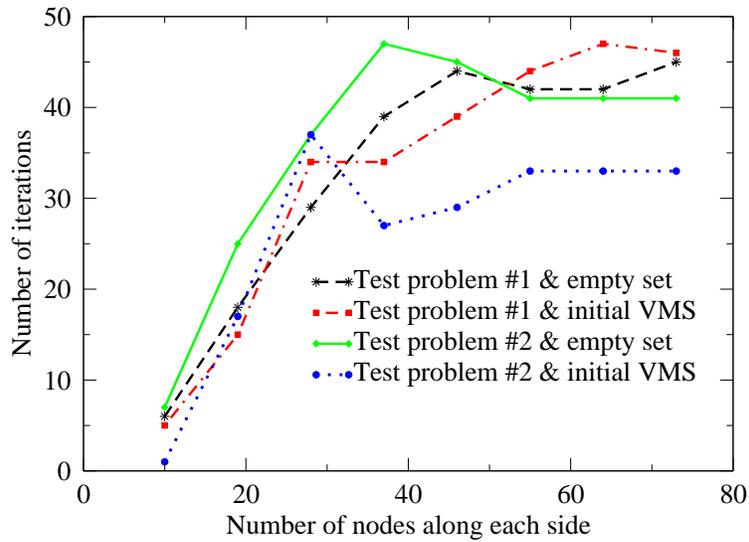}}
\vspace{-0.5in}
\caption{This figure compares number of iterations taken by the active set strategy for the 
\emph{VMS formulation} with respect to mesh refinement. We have employed four-node quadrilateral 
(top figure) and three-node triangular (bottom figure) meshes. Note that, in the case of three-node 
triangular meshes, we have used $+45$-degree and $-45$-degree meshes for test problems \#1 and \#2, 
respectively. For both the test problems two different sets are used as an initial guess in the 
active set strategy. \label{Fig:Optim_HVM_active_set_strategy}}
\end{figure}

\newpage
\begin{figure}
\centering
\vspace{0.75in}
\includegraphics[scale=0.4]{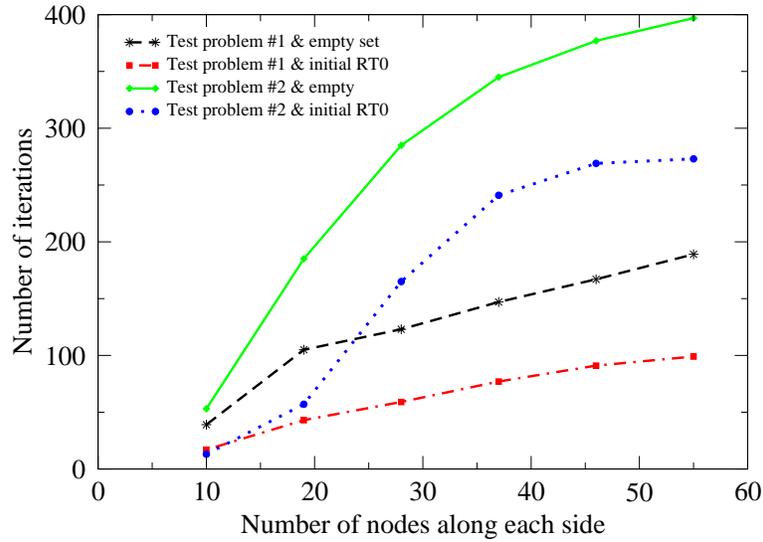}
\caption{This figure compares number of iterations taken by the active set strategy for the 
\emph{RT0 formulation} with respect to mesh refinement. We have employed $+45$-degree and 
$-45$-degree triangular meshes for test problems \#1 and \#2, respectively. For both the 
test problems two different sets are used as an initial guess in the active set strategy. 
\label{Fig:Optim_RT0_active_set_strategy}}
\end{figure}

\clearpage
\begin{figure}
\centering
\vspace{-0.5in}
\subfigure[]{\includegraphics[scale=0.4]{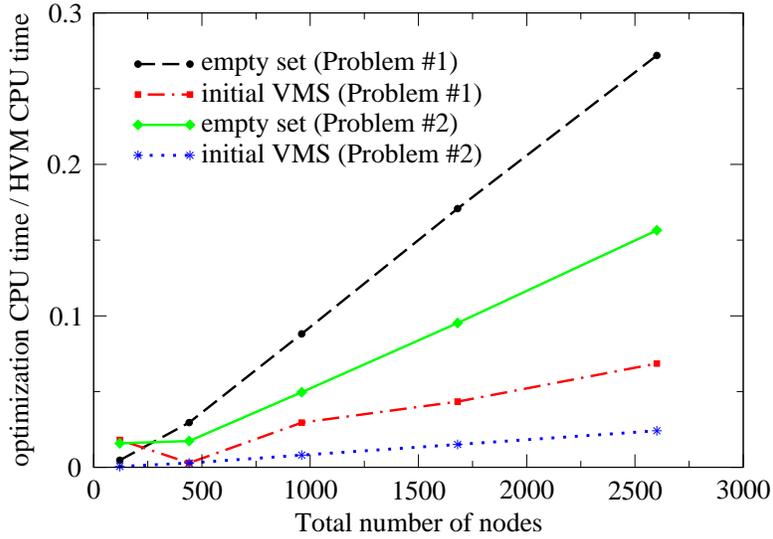}}
\vspace{0.5in}
\subfigure{\includegraphics[scale=0.4]{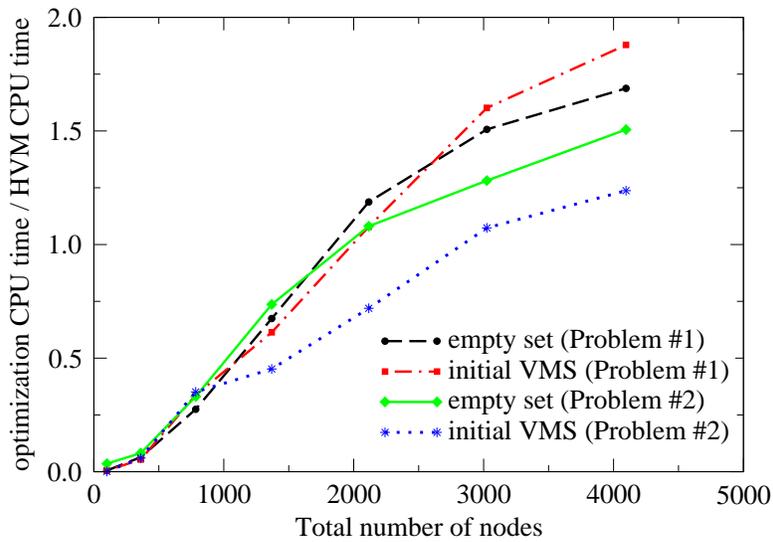}}
\vspace{-0.5in}
\caption{This figure compares the computational effort of the \emph{optimization-based VMS method} with 
respect to mesh refinement. We employed four-node quadrilateral (top figure) and $45$-degree triangular 
(bottom figure) meshes. On the $y$-axis we have the ratio of the additional CPU time taken by the 
optimization-based VMS method to the CPU time taken by the VMS method (which produces negative solutions). 
We considered test problem \#1 and \#2. Note that each node has three degrees-of-freedom. 
\label{Fig:Optim_HVM_CPU_time}}
\end{figure}

\newpage  
\begin{figure}
\centering
\includegraphics[scale=0.4]{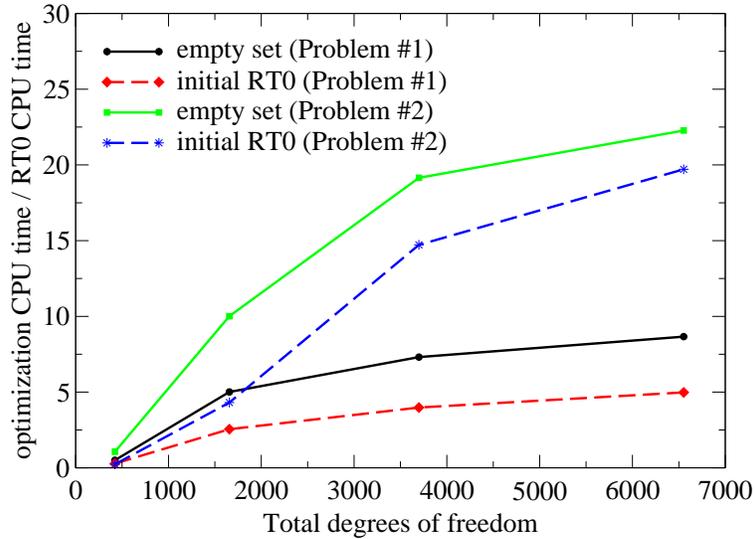}
\caption{This figure compares the computational effort of the \emph{optimization-based RT0 method} with 
respect to mesh refinement. On the $y$-axis we have the ratio of the additional CPU time taken 
by the optimization-based RT0 method to the CPU time taken by the RT0 method (which produced negative 
solutions). We employed $+45$-degree and $-45$-degree triangular meshes for test problems \#1 and \#2, 
respectively. Note that each node has three degrees-of-freedom. \label{Fig:Optim_RT0_CPU_time}}
\end{figure}

\newpage
\begin{figure}
\centering
\includegraphics[scale=0.4]{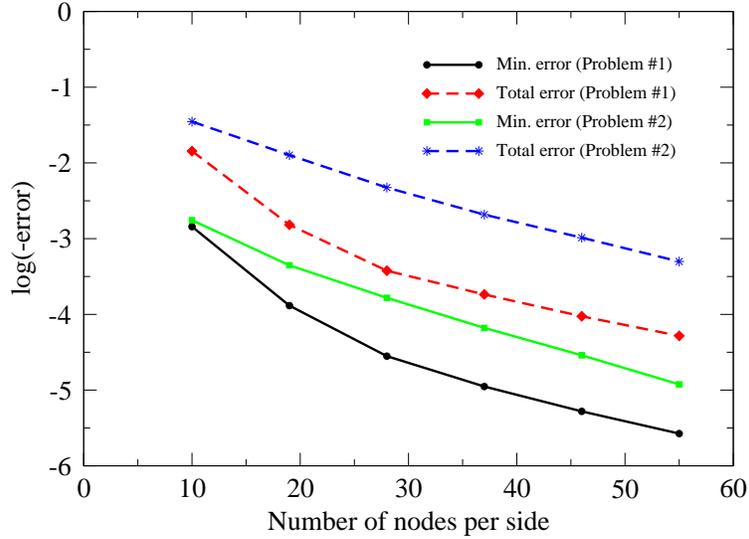}
\caption{This figure compares the error in the local mass balance with respect to mesh 
refinement using the RT0 formulation. We have used $+45$-degree and $-45$-degree triangular 
meshes for test problems \#1 and \#2, respectively. We have plotted both the maximum element 
sink strength, and the total error by summing the artificial sink strengths in all elements. 
From the above figure one can see that both these errors in the local mass balance decrease 
exponentially with respect to the element size. \label{Fig:Optim_RT0_min_total_errors}}
\end{figure}

\end{document}